\numberwithin{section}{chapter}
\theoremstyle{plain}
\theoremstyle{plain}
\newtheorem{thm}{Theorem}
\newtheorem*{thmB}{Theorem B}
\newtheorem*{thmC}{Theorem C}
\newtheorem*{conjD}{Conjecture D}
\theoremstyle{definition}
\newtheorem{defn}[thm]{Definition}
\theoremstyle{plain}
\newtheorem{prop}[thm]{Proposition}
\newtheorem*{propA}{Proposition A}
\newtheorem{lem}[thm]{Lemma}
\theoremstyle{remark}
\newtheorem{rem}[thm]{Remark}
\newtheorem{ex}[thm]{Example}
\newtheorem{question}[thm]{Question}
\titlespacing{\section}{0pt}{4ex}{2ex}
\newcommand{\C}{\mathbb{C}}
\newcommand{\R}{\mathbb{R}}
\newcommand{\Q}{\mathbb{Q}}
\newcommand{\Z}{\mathbb{Z}}
\newcommand{\N}{\mathbb{N}}
\newcommand{\OO}{\mathcal{O}}
\newcommand{\F}{\mathcal{F}}
\newcommand{\G}{\mathcal{G}}
\newcommand{\all}{\mathcal{A}ll}
\newcommand{\s}{{Section}}
\newcommand{\stab}{\mathrm{stab}}
\newcommand{\id}{\trianglelefteq}
\newcommand{\ox}{^\circledast}
\newcommand{\dx}{^\times}
\newcommand{\sx}{^\times\!}
\newcommand{\Ext}{\mathrm{Ext}}
\newcommand{\ind}{\mathrm{ind}}
\newcommand{\res}{\mathrm{res}}
\newcommand{\spn}{\mathrm{span}}
\newcommand{\kom}{\mathfrak{K}om}
\newcommand{\gen}[1]{\langle #1\rangle}
\newcommand{\fr}[1]{\mathfrak{#1}}
\newcommand{\ul}[1]{\underline{#1}}
\newcommand{\slot}{\,\underbar{\;\;}\,}
\DeclareFontFamily{U}{wncy}{}
    \DeclareFontShape{U}{wncy}{m}{n}{<->wncyr10}{}
    \DeclareSymbolFont{mcy}{U}{wncy}{m}{n}
    \DeclareMathSymbol{\Sh}{\mathord}{mcy}{"58}
\newenvironment{changemargin}[2]{%
 \begin{list}{}{%
  \setlength{\topsep}{0pt}%
  \setlength{\leftmargin}{#1}%
  \setlength{\rightmargin}{#2}%
  \setlength{\listparindent}{\parindent}%
  \setlength{\itemindent}{\parindent}%
  \setlength{\parsep}{\parskip}%
 }%
\item[]}{\end{list}}
\author{Federico William Pasini}
\title{Knots and primes}
\begin{document}
\frontmatter

\begin{titlepage}
\begin{changemargin}{3.6em}{-3.6em}
\begin{center}

{\scshape\LARGE \par}\vspace{2.5cm}
\textsc{\Large Doctoral Thesis}\\[0.5cm]

{\huge \bfseries CLASSIFYING SPACES FOR KNOTS\\ New bridges between knot theory and algebraic number theory\par}\vspace{1cm}
\LARGE
\textsc{Federico William Pasini}\\[1cm]
\textsc{\emph}{Supervisor:} \\
\textsc{Professor Thomas Stefan Weigel}
\\[3cm]
 
\Large \textit{A thesis submitted in partial fulfillment of the requirements\\ for the degree of Philosophiae Doctor in Mathematics}\\[4cm]
\large
Dipartimento di Matematica e Applicazioni\\Universit\`a degli Studi di Milano-Bicocca\\[0.5cm]
 
{\large 13 September 2016}
 
\vfill
\end{center}
\end{changemargin}
\end{titlepage}
\vspace*{7cm}
\noindent{\LARGE\textit{To my brother-in-soul\\Dario Merlin}}
\vfill

\titleformat
{\chapter} 
[display] 
{\mdseries\huge\scshape} 
{} 
{0ex} 
{
    \centering
}
[
\rule{\textwidth}{1pt}
\vspace{1ex}
]
\cleardoublepage
\addcontentsline{toc}{chapter}{Preface}
\chapter*{Preface}
This thesis is aimed at earning the degree of \emph{Philosophiae Doctor}, so I find it good to start it with some philosophy.

Regarding epistemology, I am strictly antimetaphysical.
I believe the epistemological essence of a human being is the datum of all his or her relationships with the others.
In this respect, I owe not only my gratitude, but also a fragment of my essence (which I am quite happy with), to all the people that shared a bit of their road with me.

But I am also an environmentalist, and thanking explicitly all the people I should would cause a too grievous sapshed among the poplar community.

Then, following an idea of Dr. Dario Celotto, I will cite here only those who had a direct influence on my PhD studies.

My gratitude goes to Thomas Weigel, who has always been more than just a mathematical advisor. He instilled the difference between good mathematics and formal manipulation of symbols in me, and constantly exhorted me to bear the pains of the path of good mathematics. He urged me to work on my weaknesses. He offered all his students periodic psychological sessions. He taught me the basics of tango. Above all, he helped me clarify my thoughts in a period of personal dire straits and guided me towards a better approach to my life as a whole. I do not know if this behaviour is general among PhD supervisors, but his case is sufficient to justify the meaningfulness of the Mathematics Genealogy Project\linebreak {\verb|https://genealogy.math.ndsu.nodak.edu/|}.

I also had the chance of spending some months as a visitor at Royal Holloway University of London. There I  was adopted by my foster advisor Brita Nucinkis, and the choice of words is not exaggerated. For her quietness, her patience and her maternal instinct, as I should say, made me feel right at home. As a consequence, I had a great time, both personally and professionally, with her and her PhD students, Ged Corob Cook and Victor Moreno. I strongly hope that period was just the beginning of a lifelong cooperation and friendship. I also benefited from some kind advice and feedbacks from Iain Moffatt. And how could I not mention the tuscan delegation at Royal Holloway? I will always bring a little of Eugenio Giannelli and Matteo Vannacci's contagious enthusiasm and vivacity with me.

Being a researcher cannot be only a job. It is a kind of approach to the whole reality. And it is a fantastic chance of growth when a wise senior researcher shares his vision with you. That is the reason why I am grateful to Roberto La Scala from the University of Bari and Ugo Moscato and Renzo Ricca from the University of Milano-Bicocca.

I left to the end the people my story braided the most with, my nest fellows at the university of Milano-Bicocca.
My elder brothers Tommaso Terragni, Claudio Quadrelli and Gianluca Ponzoni have always been ready to provide me with encouragement in hard times. Above all, they built a sense of team group among PhD students in algebra that survived their departure. I hope I managed to even strengthen this spirit and to extend it, beyond the boundaries of algebra, to all the younger PhD students. Whether I succeeded might be checked in the acknowledgements of their theses.

I shared all the (many) joys and (many) sorrows of the doctoral studies with Dario Celotto, Sara Mastaglio, Elena Rossi, Marina Tenconi, Elisa Baccanelli, Valentina Folloni, Bianca Gariboldi, Iman Mehrabi Nezhad, Paola Novara, Elena Volont\'e, Alessandro Calvia, Elia Manara, Arianna Olivieri, Mariateresa Prandelli, Martino Candon, Jessica Cologna, Andrea Galasso, Jinan Loubani, Daniela Schenone and Alberto Viscardi. The Risiko nights, the hikes in the mountains, the trips and the dinners we enjoyed together will be forever on my mind.
Finally, Ilaria Castellano and Benedetta Lancellotti have become my best friends by virtue of all the adventures we have gone through together (and in which citrus fruits usually play a big role). They would deserve a praise that cannot be expressed in words, but only with a big hug.

Without them all, I would have probably produced far more mathematics, and had far less fun.
\tableofcontents
\mainmatter
\titleformat
{\chapter} 
[display] 
{\mdseries\huge\scshape} 
{\centering Chapter \thechapter} 
{0ex} 
{
    \rule{\textwidth}{1pt}
    \vskip1.2ex
    \centering
} 
\chapter{Introduction}
It may very well be claimed that the birth certificate of topology is Leonhard Euler's solution to the \emph{bridges of K\"onigsberg} problem, in 1735. Curiously, that same year saw the birth of Alexandre-Teophile Vandermonde, who was the first to explicitly conceive the possibility of a mathematical study of knots and links as a part of the new discipline (cf. \cite{vandermonde}). This idea spreaded silently in the mathematical community of the 18th century, but it was not until the 19th century that Carl Friedrich Gauss laid the first two foundational stones of knot theory, which would become a source of inspiration for plenty of later developments. On one side, he developed a first method to tabulate the knots, by writing the sequence of crossings met by a point running on a knot. On the other, after becoming involved in electrodynamics, he introduced the notion of linking number of two disjoint knots, i.e., the number of times the first knot winds around the other (probably in connection with Biot-Savart law, cf. \cite{ricca}):
\begin{equation}\label{eq:lk}
 \mathrm{Lk}(K_1,K_2)=\frac{1}{4\pi}\oint_{K_1}\oint_{K_2}\frac{r_1-r_2}{|r_1-r_2|^3}(dr_1\times dr_2).
\end{equation}
He also proved that this linking number remains the same if the knots are interchanged and that it is invariant under ambient isotopy: it is the first example of a link type invariant. 

Classical knot theory deals with the embeddings $\sqcup S^1\hookrightarrow S^3$ of (collections of) circles into the $3$-sphere (classical links). The significance of the theory is easily explained. On one side, links are sufficiently tangible objects (as far as a mathematical object can be) to provide a comfortable environment for testing new techniques and developing new ideas. On the other side, $3$-dimensional manifolds enjoy a good deal of features that do not appear in any other dimension and make dimension $3$ particularly interesting (and challenging, as the history of Poincar\'e conjecture showed) to topologists (see e.g. the survey \cite{topo3mani}), and links play a prominent role in the topology of $3$-manifolds: for example, J. W. Alexander showed (cf. \cite{alexander}) that every closed orientable $3$-manifold is a branched covering of $S^3$ branched over a link.
\vspace{\baselineskip}

Algebraic number theory is a discipline that develops from the very roots of both ancient algebra (the theory of equations) and modern algebra (the theory of structures, after \'Evariste Galois). It ideally started with diophantine equations (cf. Diophantus's \textit{Arithmetica}). The search for solutions to such equations led to the development of the concepts of divisibility and primality, which in turn attracted the attention of many great mathematicians of 17th and 18th century, like Pierre de Fermat, Euler, Joseph-Louis Lagrange and Adrien-Marie Legendre. But it was again Gauss that gathered all the previous isolated results into a stable and systematic theory. And, as he was Gauss for a good reason, he also added some of his personally crafted jewels into his foundational book \textit{Disquisitiones arithmeticae} (see \cite{gauss} for an english translation).
In particular, he proved the \emph{quadratic reciprocity law}: for distinct odd prime numbers $p,q\in\N$, 
\[
 \left(\frac pq\right)\left(\frac qp\right)=(-1)^{(p-1)(q-1)/4},
\]
\[
\mbox{where }\:\left(\frac pq\right)=\begin{cases}
                                           1 & p\mbox{ is a square mod }q\\
                                           -1 & \mbox{otherwise}
                                          \end{cases}\quad\mbox{ is called the \emph{Legendre symbol}.}
\]
In trying to generalise this theorem to higher powers than squares, Gauss proved that the elements of $\Z[i]$ enjoy an essentially unique factorisation into primes, like the ``classical'' integers do, and the failure of a general $\Z[\sqrt{-n}]$ to be a unique factorisation domain eventually led Ernst Kummer and Richard Dedekind to formulate the concept of ideal of a ring. This in turn paved the way to the realisation that number theory could be conveniently reformulated in terms of ``integer numbers'' in finite extensions of $\Q$. And that is the story of how algebraic number theory intermarried with Galois theory. In modern terms, algebraic number theory generalise the notion of \emph{integer number} in the field of rationals to that of \emph{algebraic integer} in a finite field extension of $\Q$, and deals with the relationship between prime numbers and prime ideals in the rings of algebraic integers of such extensions.
\vspace{\baselineskip}

Along the years, knot theory and algebraic number theory have grown up independently, but apparently they have kept track of a common germ. M. Morishita collected, in his pleasant book \cite{morishita}, lots of interesting analogies between knot theory and algebraic number theory, based on a sort of ``geometrization of numbers'' that highlights the similar behaviour of knots on one side and primes on the other. Remarkably, one of these analogies matches the linking number with the Legendre symbol, and the integral \ref{eq:lk} expressing the former with a Gauss sum yielding the latter, also proved by Gauss
\[
 \left(\sum_{x\in F_q}\zeta_q^{x^2}\right)^{p-1}=\left(\frac{(-1)^{\frac{q-1}{2}}q}{p}\right).
\]

Might it be that the prince of mathematicians already had this vision of magic bridges that guided him during his explorations in the realms of topology and number theory?
\vspace{\baselineskip}

It is my intent to carry over this line of research by exploiting a new topological ingredient that has recently entered the scene of mathematics, but, up to my knowledge, has never been conceived in this framework, i.e. classifying spaces for families of subgroups. These spaces generalise the concept of total space $EG$ of the universal principal bundle $EG\to BG$ for the group $G$. More precisely, they relax the absence of $G$-fixed points to the requirement that only a prescribed family of subgroups of $G$ fix points and the fixed-point space of each such subgroup be contractible. Classifying spaces gained the attention of mathematicians in connection with the Baum-Connes Conjecture on the topological $K$-theory of the reduced group $C^*$-algebra (cf. \cite{misval}) and the Farrell-Jones Conjecture on the algebraic $K$- and $L$-theories of group rings (cf. \cite{farjon}). 

Classifying spaces are known to exist and to be unique up to $G$-homotopy for all families of all groups $G$ (cf. \cite[Subsection 1.2]{luecksurvey}). Yet, a major problem is to find nice concrete models for such spaces, on which to carry effective computations. 

In the context of knot theory, a classifying space of a knot group $G$ for the family of subgroups generated by meridians can be defined. It amounts to ``completing'' the universal cover of the knot complement (which carries a free $G$-action) to a $G$-space such that the quotient over $G$ is the whole $S^3$. As a first result, I present a construction that provides explicit models of these classifying spaces in the case of prime knots. In detail, if $G$ is a prime knot group and $H\cong\Z^2$ is a peripheral subgroup generated by a meridian $a$ and a longitude $l$ of $G$ (see Chapter \ref{chp:knotTheory} for the definitions), then the classifying space $E_\G G$ of $G$ for the family of subgroups generated by meridians is given by a pushout of $G$-CW-complexes
\begin{equation*}
\xymatrix{
\bigsqcup_{G/H}\R^2\ar^\phi[rrr]\ar^{id_G\times\psi}[d] &&& EG\ar[d]\\
\bigsqcup_{G/H}Cyl(\pi\colon\R^2\to\R)\ar[rrr]&&&E_\G G.
}
\end{equation*}
Here $Cyl(\pi\colon\R^2\to\R)$ denotes the mapping cylinder of the canonical projection from $EH\approx\R^2$ to the subcomplex $EH^{\gen a}\approx\R$ of points fixed by the subgroup $\gen a$ and the maps starting from the upper-left corner glue the term $EH$ in each copy of that mapping cylinder to a boundary component of $EG$.

Actually, an analogous pushout does make sense for non-prime knots as well, as it is explained in \S \ref{ssec:non-prime knots}, but the attaching maps, and thus the model $E_\G G$ obtained, are fairly more complicated. In other words, in the case of non-prime knots the models lose those features of neatness that make their prime knot analogues attractive.

The classifying spaces just constructed can serve to extend and to shed a new light on the topic of branched coverings of knots. In this respect, having an explicit model for the classifying space enables to prove the following.
\begin{propA}
Let $G$ be a prime knot group and $U\id G$ a normal subgroup of finite index. There is a short exact sequence of groups
\begin{equation*}
\xymatrix{
1\ar[r]&M_U\ar[r]&U\ar[r]&\pi_1(E_\G G/U)\ar[r]&1,
}
\end{equation*}
where $M_U$ is the normal subgroup of $U$ generated by those powers of the meridians of the knot that stay in $U$.
\end{propA}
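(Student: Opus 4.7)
The approach is to quotient the pushout defining $E_\G G$ by the action of $U$, recognise the resulting topological pushout as the result of attaching solid tori to a finite cover of the knot complement, and then compute $\pi_1$ by Seifert--van Kampen.

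Set $U_g := U\cap gHg^{-1}$ and $m_g := gag^{-1}$, so that $U_g$ is a finite-index subgroup of $gHg^{-1}\cong\Z^2$. The $U$-orbits in $G/H$ are parametrised by the double cosets $U\backslash G/H$, and the stabiliser of $gH$ in $U$ is $U_g$. Consequently, the quotient pushout has corners
\[
\bigsqcup_{UgH\in U\backslash G/H} T_g,\quad EG/U,\quad \bigsqcup_{UgH} V_g,\quad E_\G G/U,
\]
with $T_g:=\R^2/U_g$ a torus and $V_g:=Cyl(\pi)/U_g$. Inspecting the $U_g$-action on $Cyl(\pi\colon\R^2\to\R)$ reveals that $V_g$ is a solid torus whose core is the image of the longitude direction and whose meridian disk is bounded by the loop $m_g^{n_g}$, where $n_g\in\N_{>0}$ is the smallest integer with $m_g^{n_g}\in U$. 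Since $U\id G$, one has $m_g^{n_g}\in U$ iff $a^{n_g}\in g^{-1}Ug=U$, so the common value $n:=n_g$ is independent of $g$. Moreover, $U$ acts freely on $EG$, so $EG/U$ is a $K(U,1)$ and a finite cover of the knot complement, under which each $T_g$ embeds as the boundary torus corresponding to the double coset $UgH$.

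This realises $E_\G G/U$ as the space obtained from $EG/U$ by attaching the solid torus $V_g$ to the boundary torus $T_g$, one for each double coset. Since $V_g$ deformation retracts onto its core and the retraction collapses the meridian, each attachment is homotopy-equivalent to attaching a $2$-cell along the loop $m_g^{n}$ (plus a $3$-cell, which is irrelevant for $\pi_1$). An iterated application of Seifert--van Kampen, performed one solid torus at a time to sidestep basepoint issues, then yields
\[
\pi_1(E_\G G/U)\;=\;U\big/\langle\langle m_g^{n}\,:\,UgH\in U\backslash G/H\rangle\rangle_U.
\]

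It remains to identify this normal closure with $M_U$. Every $m_g^n$ is a meridian power lying in $U$, so the normal closure is contained in $M_U$. Conversely, any meridian power in $U$ has the form $(g'a(g')^{-1})^{kn}$ for some $g'\in G$ and $k\in\Z$; writing $g'=ugh$ with $u\in U$, $h\in H$ and $g$ a chosen representative of $Ug'H$, and using $ha^nh^{-1}=a^n$ (as $H$ is abelian and contains $a$), the element equals $u\,m_g^{kn}\,u^{-1}=u(m_g^n)^k u^{-1}$, which lies in the normal closure. Hence $M_U$ coincides with the normal closure, completing the proof. The principal technical obstacle is the verification that $V_g$ is a solid torus with the asserted meridian curve, which requires a careful analysis of how the finite-index subgroup $U_g\leq gHg^{-1}$ acts on the mapping cylinder; the remainder of the argument is standard bookkeeping.
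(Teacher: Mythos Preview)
Your proof is correct and follows essentially the same route as the paper. Both arguments identify $E_\G G/U$ as $EG/U$ with a solid torus glued to each boundary component, then apply Seifert--van Kampen iteratively to obtain $\pi_1(E_\G G/U)$ as $U$ modulo the normal closure of the boundary meridians; the paper carries out the amalgamated-product computation with explicit Tietze moves, while you phrase it as attaching $2$-cells along the $m_g^n$, but this is only a cosmetic difference. Your final paragraph verifying that the normal closure of the finitely many $m_g^n$ coincides with the full $M_U$ is a detail the paper leaves implicit, so your version is in fact slightly more complete on that point.
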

This sequence is interesting on its own. Indeed, applying the abelianisation functor we obtain the exact sequence
\[
\xymatrix{
 M_U^{ab}\ar[r]&U^{ab}\ar^-{\pi}[r]&H_1(E_\G G/U)\ar[r]&0.
 }
\]
And now let us put on the number theory glasses. Take an algebraic number field $K$, call $\mathcal O_K$ its ring of algebraic integers and $G_K$ its absolute Galois group, i.e. the Galois group of its separable closure (cf. \cite[Section IV.1]{neukirch}).
A direct consequence of Hilbert class field theory (cf. \cite{neukirch}) is that the ideal class group $Cl_K$ (cf. Chapter \ref{chp:ant}) fits into an exact sequence
\[
 \xymatrix{
 \coprod_{p\in \mathrm{Spec(\OO_K)}}\OO_p^*\ar[r]&G_K^{ab}\ar^-{\varpi}[r]&Cl_K\ar[r]&0,
 }
\]
where $\OO_p$ is the complete discrete valuation ring associated with the prime $p\id\OO_K$.
Even if an explicit description has not been obtained yet, it is known that $\ker(\varpi)$ is generated by the ramification groups (cf. \cite[Section II.9]{neukirch}) of the primes of $K$. On the other hand, $\ker(\pi)$ is generated by the ``meridians'' of $BU$, which, as we will see in Chapter \ref{chp:classSpaces}, are responsible of the ramification in branched coverings of knots. In other words, $\varpi$ kills the ramification of prime ideals in algebraic number fields, exactly like $\pi$ ``kills'' the ramification of branched coverings of $S^3$.

But the sequence is also a key tool in proving the following 
\begin{thmB}
Let $G$ be a prime knot group and let $U\id G$ be a normal subgroup of finite index. Then the following are equivalent.
\begin{enumerate}
\item $U=M_U$;
\item The canonical projection $E_\G G/M_U\to E_\G G/U$ is a trivial covering;
\item The canonical projection $E G/M_U\to E G/U$ is a trivial covering;
\item $\pi_1(E_\G G/U)=1$;
\item $E_\G G/U\cong S^3$;
\end{enumerate}
\end{thmB}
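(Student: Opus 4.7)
The plan is to use Proposition A, which identifies $\pi_1(E_\G G/U)$ with the quotient $U/M_U$, as the pivot around which all five conditions rotate. From this identification the equivalence $(1)\Leftrightarrow(4)$ is immediate: $U=M_U$ if and only if $U/M_U$ is trivial, if and only if $\pi_1(E_\G G/U)=1$.

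For $(1)\Leftrightarrow(2)$ and $(1)\Leftrightarrow(3)$ the key step is to verify that the action of $U/M_U$ on $E_\G G/M_U$, respectively on $EG/M_U$, is free, so that both canonical projections are genuine regular coverings of deck group $U/M_U$, trivial exactly when $U=M_U$. Freeness on $EG/M_U$ is automatic because $EG$ is a free $G$-space. On $E_\G G/M_U$ I would note that the $G$-isotropy at any point of $E_\G G$ is either trivial or a conjugate of $\langle a\rangle$ for some meridian $a$. Hence if $u\in U$ has a fixed point modulo $M_U$, then $u = m\cdot g a^k g^{-1}$ for some $m\in M_U$ and some integer $k$; since $u,m\in U$, the conjugate $g a^k g^{-1}$ is a power of a meridian of the knot lying in $U$, which by the very definition of $M_U$ lies already in $M_U$, and hence $u\in M_U$.

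The implication $(5)\Rightarrow(4)$ is immediate from $\pi_1(S^3)=1$. The real work lies in $(4)\Rightarrow(5)$, which I would attack in two steps. First I would establish that $E_\G G/U$ is a closed orientable $3$-manifold via a local analysis of the pushout defining $E_\G G$: away from the images of the mapping cylinders the quotient inherits local Euclidean structure from the free part of the $U$-action on $EG$, while near the core line $\R$ of a cylinder $Cyl(\pi\colon\R^2\to\R)$ the $U$-isotropy is a finite cyclic subgroup of the associated meridian stabilizer, and the quotient of the local model by this finite cyclic action smooths out to a solid cylindrical neighbourhood of the branching line. Since $E_\G G/G\cong S^3$, the finite-index projection $E_\G G/U\to S^3$ is then a branched covering over the knot, so $E_\G G/U$ is closed and orientable. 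Then, under the hypothesis $\pi_1(E_\G G/U)=1$, the Poincar\'e conjecture (now a theorem of Perelman) forces $E_\G G/U\cong S^3$.

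The main obstacle is precisely this last implication: ensuring that $E_\G G/U$ inherits a genuine closed $3$-manifold structure from the pushout description, and then invoking the Poincar\'e conjecture. All the other implications collapse to simple applications of Proposition A together with the freeness analysis for the $U/M_U$-actions.
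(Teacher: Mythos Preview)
Your proposal is correct and follows essentially the same architecture as the paper: Proposition A gives $(1)\Leftrightarrow(4)$, and the Poincar\'e conjecture gives $(4)\Leftrightarrow(5)$ once $E_\G G/U$ is recognised as a closed orientable $3$-manifold (which the paper established earlier when describing $E_\G G/U$ as $EG/U$ with solid tori glued along the boundary tori).

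The only noteworthy difference is in how you handle the cycle $(1),(2),(3)$. You prove directly that $U/M_U$ acts freely on both $E_\G G/M_U$ and $EG/M_U$, making each projection a regular covering with deck group $U/M_U$, whence triviality is equivalent to $U=M_U$. The paper instead runs the shorter cycle $(1)\Rightarrow(2)\Rightarrow(3)\Rightarrow(1)$: the first implication is vacuous (the map is the identity), the second is restriction via $EG/U\subseteq E_\G G/U$, and the third is the Galois correspondence for coverings of $BG$. Your freeness argument is correct (and is essentially the content of the paper's Lemma on proper discontinuity of the $U/M_U$-action), but the paper's cycle avoids having to verify in general that $E_\G G/M_U\to E_\G G/U$ is a covering.
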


A conjectural relationship between the (co)homology of the classifying spaces and the Shafarevi\v{c} groups of algebraic number fields inspired the discovery of a Poitou-Tate-like $9$-term exact sequence. In detail, if $S$ is a nonempty set of primes of an algebraic number field $K$, including the primes at $\infty$, and $A$ is a finite $G_S$-module s.t. $\nu(|A|)=0$ for all $\nu\notin S$, Poitou-Tate sequence is the $9$-term exact sequence
\begin{equation*}
\xymatrix{
0\ar[r] & H^0(K_S,A)\ar[r] & P^0(K_S,A)\ar[r] & H^2(K_S,A')^\vee\ar[dll]&\\
& H^1(K_S,A)\ar[r] & P^1(K_S,A)\ar[r] & H^1(K_S,A')^\vee\ar[dll]&\\
& H^2(K_S,A)\ar[r] & P^2(K_S,A)\ar[r] & H^0(K_S,A')^\vee\ar[r]&0\\
}
\end{equation*}
connecting the Galois cohomology of $K$ with the Galois cohomology of its completions with respect to a set of non-archimedean distances induced by the prime ideals of the ring of algebraic integers of $K$ (see Chapter \ref{chp:ant} for explanations).
The kernel of the map $\beta$ is the \emph{first Shafarevi\v{c} group} of $K$:
\[
\Sh^1(G_S,A)=\ker\Big(H^1(K_S,A)\to P^1(K_S,A)\Big).
\]

By using tools from the theory of derived categories with duality, a brief review of which can be found in the appendix, it has been possible to obtain an amazingly similar sequence for knot groups, taking into account that
\begin{itemize}
 \item this sequence is in homology, so all arrows go in the opposite direction with respect to those in Poitou-Tate sequence;
 \item $H^i(K_S,A')^\vee$ in Poitou-Tate sequence stands for the Pontryagin dual of $H^i(K_S,A')$, and the cohomology groups appearing in our sequence are morally dual to their homology counterparts.
\end{itemize}
\begin{thmC}
Let $G$ be a knot group, $H\leq G$ a peripheral subgroup and $U\id G$ a normal subgroup of finite index. Then there is an exact sequence of groups
\begin{equation}\label{eq:poitou-tate U intro}
\xymatrix{
0\ar[r] & H^0(U,\Z)\ar[r] & \coprod_{G/U\! H} H_2(H\cap U,\Z)\ar[r] & H_2(U,\Z)\ar[lld] & \\
 & H^1(U,\Z)\ar[r] & \coprod_{G/U\! H} H_1(H\cap U,\Z)\ar[r] & H_1(U,\Z)\ar[lld] & \\
 & H^2(U,\Z)\ar[r] & \coprod_{G/U\! H} H_0(H\cap U,\Z)\ar[r] & H_0(U,\Z)\ar[r] & 0.
}
\end{equation}
\end{thmC}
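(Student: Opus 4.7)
The plan is to identify $EG/U$ with a finite regular covering of the knot complement and then to extract the nine-term sequence from the long exact sequence of a pair combined with Poincar\'e--Lefschetz duality.

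More precisely, since $X_K=S^3\setminus K$ is an aspherical compact orientable $3$-manifold with torus boundary, $EG$ is its universal cover and $\widetilde X_K:=EG/U$ is a finite regular cover, itself a compact orientable aspherical $3$-manifold. Consequently $H_*(U,\Z)=H_*(\widetilde X_K,\Z)$ and $H^*(U,\Z)=H^*(\widetilde X_K,\Z)$. All peripheral subgroups of $G$ are conjugate to $H$ and $U$ is normal, so the connected components of $\partial\widetilde X_K$ are indexed by $U\backslash G/H=G/UH$, each being a torus isomorphic to $B(H\cap U)$ (recall that $H\cap U$ has finite index in $H\cong\Z^2$, hence is again free abelian of rank two). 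Therefore $H_n(\partial\widetilde X_K,\Z)=\bigsqcup_{G/UH}H_n(H\cap U,\Z)$.

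Next I would plug these identifications into the long exact sequence of the pair $(\widetilde X_K,\partial\widetilde X_K)$ and apply the Lefschetz duality isomorphism $H_n(\widetilde X_K,\partial\widetilde X_K;\Z)\cong H^{3-n}(\widetilde X_K;\Z)=H^{3-n}(U,\Z)$, available thanks to the orientability of $\widetilde X_K$, to replace each relative term by the corresponding cohomology of $U$. The nine-term portion is then isolated thanks to the vanishings $H_3(\widetilde X_K)=0$ (a connected compact $3$-manifold with non-empty boundary has no integral fundamental class) and $H^3(U)\cong H_0(\widetilde X_K,\partial\widetilde X_K)=0$, which kill the two ends of the long exact sequence and explain the boundary zeros in the statement.

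The main obstacle I foresee is not the assembly of the sequence but the naturality of the connecting maps with respect to the group-theoretic data on $G$. Here the derived-category duality formalism reviewed in the appendix would allow one to reformulate the whole argument algebraically, by applying duality to the Mayer--Vietoris triangle produced by the defining pushout of $E_\G G$ after tensoring over $\Z U$ with $\Z$; this algebraic viewpoint makes the Poitou--Tate flavour of the sequence manifest and exhibits the maps between $H_n(U,\Z)$ and $H^{3-n}(U,\Z)$ as genuine duality morphisms rather than as geometric cap products.
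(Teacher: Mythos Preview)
Your argument is correct and gives a clean proof of the nine-term sequence, but it follows a genuinely different route from the paper's own proof.

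The paper does begin, in the section on Poincar\'e--Lefschetz duality, with essentially the computation you outline---but only for $U=G$, and only to extract the short exact sequence $0\to H^2(G,\Z[G])\to\ind_H^G\Z\to\Z\to 0$. From there the paper proceeds purely algebraically: it writes down explicit finite free $\Z[G]$-resolutions $P_\bullet$, $Q_\bullet$, $P_\bullet^{\circledast}$ of the three terms, observes that $P_\bullet^{\circledast}$ is literally the twisted dual of $P_\bullet$, constructs by hand a self-duality $\zeta_\bullet\colon Q_\bullet\to Q_\bullet^{\circledast}$, and shows that $P^{\circledast}\to Q\to P\to P^{\circledast}[1]$ is a self-dual distinguished triangle in the derived category $\fr P(\Z G)$. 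The passage to a general finite-index normal $U$ is then obtained in one stroke by proving that $\res^G_U$ is a \emph{unitary} functor between triangulated categories with duality (Proposition~\ref{prop:res unitary}), so that it carries self-dual triangles to self-dual triangles; applying $\mathrm{Tor}^U_\bullet(\slot,\Z)$ to the restricted triangle yields~\eqref{eq:poitou-tate U}.

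Your approach, by contrast, repeats the Lefschetz-duality argument directly on the finite cover $EG/U$ and never leaves topology. This is shorter and entirely adequate for the bare statement of Theorem~C. What the paper's machinery buys is the explicit identification of the connecting maps as honest duality morphisms at the chain level (via $\zeta_\bullet$ and the unitarity isomorphism $\varsigma$), which is precisely what is exploited in the subsequent attack on Conjecture~D, where one needs to compare $H_1(\alpha_\bullet)$ with the dual of $\widetilde\alpha$. You correctly anticipate this in your final paragraph: the derived-category formalism is not needed for existence of the sequence, but it is what makes the maps tractable for the later arguments.
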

And, if we cut out the subsequence in degree $1$,
\begin{conjD}
Let $G$ be a knot group, $H\leq G$ a peripheral subgroup and $U\id G$ a normal subgroup of finite index. Then there is an exact sequence of groups
\[
\xymatrix@C-6pt{
0\ar[r] & H^1(E_\G G/U,\Z)\ar[d]\\
 & H^1(U,\Z)\ar[r] & \coprod_{G/U\! H} H_1(H\cap U,\Z)\ar[r] & H_1(U,\Z)\ar[d] & \\
 &&&H_1(E_\G G/U,\Z)\ar[r] & 0.
}
\]
\end{conjD}
Note that $H\cap U$ is still isomorphic to $\Z^2$, as $U$ has finite index in $G$. Its generators are products of powers (in multiplicative notation) of the generators of $H$.

Conjecture D will be proved in some special cases, but remains open in full generality, so the hunt is not over.

Besides the charming analogies, there is also a remarkable difference between knots and numbers. That is, in the realm of knots the first homology and the first cohomology group of the same space $E_\G G/U$ appear as candidates to complete the degree-$1$ subsequence of \eqref{eq:poitou-tate U intro} on both ends. This has no parallel in algebraic number theory.
Knot theory has topological spaces naturally available to work with, while algebraic number theory has not. And dealing with groups that come as the (co)homology of such spaces might be far easier than dealing with generic groups, since homology and cohomology are bonded to each other by plenty of duality principles.

An explicative example is Leopoldt Conjecture in number theory. Let $K$ be an algebraic number field with $[K:\Q]=n$, and suppose for simplicity that $K\supseteq\Q[i]$ (so that $K$ has no real embeddings) and $K$ is ``large enough'' (e.g., it contains a primitive $p$th root of unity, for some prime $p\in\Z$). Set $S_p=\{Q\id\OO_K\mid Q\cap\Z= p\}$ (i.e., $Q$ lies above $p$, cf. Section \ref{sec:ramification of ideals}), $S_\infty$ the set of places of $K$ at infinity (cf. Section \ref{sec:valuations}) and $S=S_p\cup S_\infty$ (note that this is a finite set). Furthermore, let $D_Q$ be the decomposition group of $Q$ (cf. \cite[Section I.9]{neukirch}) and let $G_K^S$ be the Galois group of the maximal field extension of $K$ unramified outside $S$ (cf. \cite[Section III.2]{neukirch}).
Then there is an exact sequence
\[
\xymatrix{
 H^1\left(G_K^S,\Z_p(1)\right)\ar[r] & \coprod_{Q\in S}H_1\left(D_Q,\Z_p\right)\ar[r] & H_1\left(G_K^S,\Z_p\right)
 }
\]
(for the meaning of $\Z_p(1)$, cf. \cite{quadro}).
The Leopoldt Conjecture claims that if one completes this sequence to
\[
\xymatrix{
0\ar[r] & T_1\ar[d]&&&\\
& H^1\left(G_K^S,\Z_p(1)\right)\ar[r] & \coprod_{Q\in S}H_1\left(D_Q,\Z_p\right)\ar[r] & H_1\left(G_K^S,\Z_p\right)\ar[d]&\\
&&& T_2\ar[r] & 0
 }
\]
then the groups $T_1$ and $T_2$ are torsion groups. There are cases (e.g., when one passes to the maximal pro-$p$ quotient $G_K^S(p)$ of $G_K^S$) in which $T_2$ is known to be even finite, but this still does not give any hints on $T_1$. On the contrary, in the cases in which Conjecture D is true, the finiteness of $H_1(E_\G G/U,\Z)$ would immediately imply the triviality of $H^1(E_\G G/U,\Z)$, via the Universal coefficient Theorem for cohomology (cf. \cite[Section 3.1]{hatcher})!

But I am an optimistic guy. Maybe one day some talented mathematician will carry forward the ``geometrization of numbers'' started by Gauss and find a way to ``spacify'' the cohomology of number fields, thus restoring the lost symmetry. I hope I will be nearby. For, I am sure, what will be disclosed will be truly amazing.

\section{Notation}
$\N$ denotes the set of natural numbers \emph{including} $0$. $\N^*=\N\setminus\{0\}$.
An isomorphism between sets with algebraic structure will be denoted $\cong$.
In a group $G$, $\gen{g_1,\dots,g_n}$ is the subgroup generated by $g_1,\dots,g_n$, while $\gen{\gen{g_1,\dots,g_n}}$ is the \emph{normal} subgroup generated by $g_1,\dots,g_n$. In a ring $R$, $R^*$ is the set of invertible elements.

$S^n$ is the $n$-dimensional sphere, $B^n$ the (open) $n$-dimensional ball.
In the category of topological spaces, $\approx$ denotes a homeomorphism, while $\simeq$ denotes a homotopy equivalence. If $Y$ is a topological space, $\overline{Y}$, $\partial Y$, $Y^\circ$ stand for its closure, boundary and interior, respectively.

\chapter[Knot theory]{Knot theory in a nutshell}\label{chp:knotTheory}
General references for this chapter are \cite{burzie} and \cite{crofox}.

\section{Basic definitions}
A \emph{$c$-component link}, or simply \emph{$c$-link} ($c\in\N^*)$ is a topological embedding, i.e. a homeomorphism onto its image, $\lambda:S^1\times\{1,\dots,c\}\longrightarrow S^3$, from a disjoint union of circles into the $3$-sphere. We usually identify it with its image $L$. The \emph{components} of the link are $L^{(i)}:=\lambda(S^{1}\times \{i\})$ for $i=1,\dots,c$. A \emph{knot} is a $1$-link.
\begin{rem}
In the most naive setting of knot theory, the codomain of a link should be the ``tangible'' space $\R^3$; the choice of $S^3$ instead is only a matter of comfort. In fact, $S^3$ is just the one-point compactification of $\R^3$, and working in a compact $3$-manifold does bring some advantages. Nevertheless, we shall occasionally feel free to view a link as an embedding in a copy of $\R^3$ sitting inside $S^3$.
\end{rem}

Thinking of circles as parametrized curves $\gamma\colon[0,2\pi]\to\C, \gamma(t)=e^{it}$, we can choose an \emph{orientation} on each component of a link, that is, a direction of travel on that component.

There is a natural notion of equivalence among links, defined as follows.
An isotopic deformation of a topological space $X$ is a map $D=D(x,t):X\times [0,1]\longrightarrow X$, simultaneously continuous in both its arguments $x$ and $t$, such that $\forall t\in [0,1], d_t:=D(\cdot,t)$ is a homeomorphism and $d_0$ is the identity. Two links $L$ and $K$ are \emph{equivalent} if there is an isotopic deformation $D$ of $\R^3$ mapping the former onto the latter, namely $d_1(L)=K$. This yields a true equivalence relation whose classes are called \emph{link types}. In particular, a knot is \emph{trivial} if it is equivalent to the standard circle
\[
\{x^2+y^2=1, z=0\}\subset\R^3\hookrightarrow S^3,
\]
or informally, if it is ``unknotted''.

The general problem of knot theory is to understand whether two given knots (links) belong to the same type. In order to achieve this task, a number of invariants of the link type have been developed from algebraic topology. Among them, the link group holds a prominent role, but before giving its definition we shall circumscribe the class of links for which the algebraic invariants manage to express their full potential.

\emph{Polygonal} links are the ones whose components are the union of finitely many closed straight-line segments. A link is \emph{tame} if it has the same type of a polygonal one, \emph{wild} otherwise. The two terms also apply to link types at once.
Tame links can be satisfactorily handled with algebraic methods. Roughly speaking, this is so because  the behaviour of polygonal curves can be encoded with finitely many information and do not require, for example, any limit process (in the following, several precise instances of this sentence will appear clearly). That is why, from now on, \emph{all links are tacitly assumed to be tame}.

The \emph{link group} $G:=G_L$ of a link $L$ is the fundamental group of its complement in $S^3$, which is independent of the base point thanks to path-connectedness.
A presentation is obtained by the following procedure:
\begin{enumerate}
\item
a polygonal link (here viewed in $\R^3$) can be projected on a suitable plane in such a way that
\begin{enumerate}
\item[(a)] the projection is $1$ to $1$ at all points of the link, except possibly a finite number of \emph{crossing points}, where it is $2$ to $1$ (the images of such points are called \emph{crossings});
\item[(b)] no vertex of the polygonal curves is projected to a double point.
\end{enumerate}
Suppose without loss of generality that the aforementioned suitable plane is the $XY$ plane. Then at each crossing we distinguish an upper strand (the one whose preimage contains the crossing point with higher $Z$ coordinate) and a lower strand. This is recorded on the planar projection by removing a small neighbourhood of the crossing from the lower strand. The resulting finite set of arcs that rise and die near crossings is a \emph{link diagram}.\\
\begin{center}
\includegraphics*[scale=.5]{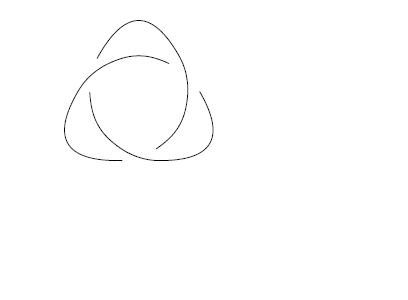}
\end{center}
\vspace*{-1cm}
\item The choice of an orientation induces an orientation on the arcs of the link diagram.\\
\begin{center}
\includegraphics*[scale=.5]{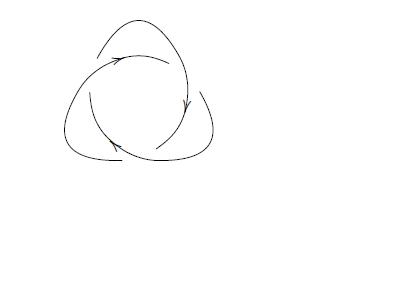}
\end{center}
\vspace*{-1cm}
\item Each arc produces a generator $a_j$ $(j=1,\dots,d)$. It represents the [homotopy class of the] loop starting from the base point, winding once as a left-handed screw around that arc and going back to the base point without wandering around anymore.\\
\begin{center}
\includegraphics*[scale=.5]{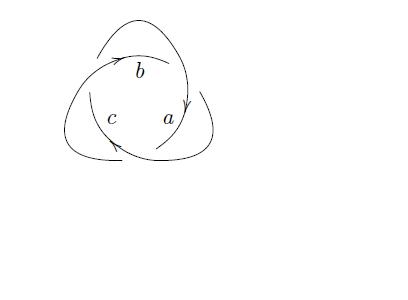}
\end{center}
\vspace*{-1cm}
\item Each crossing produces a relation $r_j$ $(j=1,\dots,d)$, according to the following rules:
\begin{equation*}
\xy 0;/r2pc/:
(0,0)*{\xy\knotholesize{8pt} \xunderh<<|>><<{b}|{x}>{a} \endxy};
(0,-1)*{xa=bx};
(4,0)*{\xy\knotholesize{8pt} \xoverh<>|<>><{a}|{x}>{b} \endxy};
(4,-1)*{ax=xb};
(6,0)*{};
\endxy
\end{equation*}
(by a simple combinatorial argument the number of arcs equals the number of crossings).\\
\begin{center}
\includegraphics*[scale=.5]{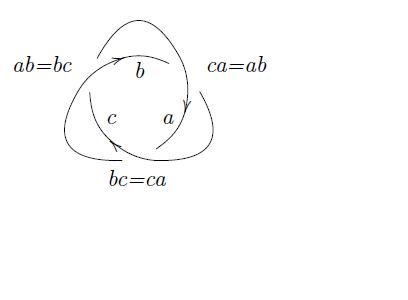}
\end{center}
\vspace*{-1cm}
\item The link group is then $G_L=\gen{a_1,\dots,a_d\mid r_1,\dots,r_d}$ (cf. \cite[Chapter VI]{crofox}).
\end{enumerate}

If in particular $L$ is a knot, there is always exactly one redundant relation. More precisely, knot groups have \emph{deficiency} $1$, that is, they admit a presentation with $(number\mbox{ }of\mbox{ } generators)= (number\mbox{ } of\mbox{ } relators)+1$ and no presentations with $(number\mbox{ }of\mbox{ } generators)= (number\mbox{ } of\mbox{ } relators)+k$ for $k>1$.
On the contrary, link groups can have arbitrary deficiency greater or equal to $1$. This is related to ``splitness''.
A link $L$ is \emph{split} if there exists a $2$-sphere $S^2\subseteq S^3\setminus L$ that separates $S^3$ into two $3$-balls, each containing at least one component of $L$. Such a $S^2$ is called a \emph{splitting $2$-sphere}. If this is the case, Seifert and Van Kampen's Theorem tells that the link group is the free product of the link groups of the two ``sublinks'' lying on the opposite sides of the $2$-sphere. Hence the deficiency of the group of the whole link is the sum of the deficiencies of the sublink groups. Then, by induction, we can construct link groups with arbitrarily large deficiency. For example, the link group of
\[
L_d=\bigsqcup_{i=1}^d \{(x,y,z)\in\R^3\mid x^2+y^2=1, z=i\}\subseteq\R^3\subseteq S^3
\]
is free on $d$ generators and has deficiency $d$.

\section[Homology of links]{Homological properties of link groups}
A \emph{tubular neighbourhood} $V$ of $L$ is a collection of open solid tori $V^{(i)}=L^{(i)}\times B^2$, each centered at one link component $L^{(i)}$, so thin as to be disjoint and non-self-intersecting (these conditions can be always fulfilled thanks to tameness). Then $S^3\setminus L$ is homeomorphic to $S^3\setminus\overline{V}$ and deformation-retracts onto $S^3\setminus V$ (from now on, we will denote $X_L$, or simply $X$ if there could be no confusion, the complement of a tubular neighbourhood $V$ of a link $L$ in $S^3$). So $G_L$ can be viewed as $\pi_1(X, x)$, where $x\in\partial X$. The boundary $T:=\partial X=\partial V$ is a disjoint union of classical tori $T^{(i)}=\partial V^{(i)}$.

Suppose that $L$ is a nontrivial knot. The inclusion-induced homomorphism of $H:=\pi_1(\partial X,x)\cong \Z^2$ into $G_L$ is injective. We can choose as generators of $H$ [the classes of] two simple closed curves $m,l$ such that (cf. \cite[Section 3.A]{burzie})
\begin{enumerate}
	\item[(a)] $m$ is null-homologous in $V$ but not in $\partial V$ (and hence not in $X$);
	\item[(b)] $l$ is homologous to $L$ in $V$ and to $0$ in $X$;
	\item[(c)] $m\cap l=\{x\}$;
	\item[(d)] the \emph{linking numbers} (integers that measure how many times a simple closed curve winds around another, see \cite{ricca} for the formal definition and interesting historical remarks) are $\mathrm{lk}(m,L)=1$ and $\mathrm{lk}(l,L)=0$.
\end{enumerate}
$m$ is referred to as a \emph{meridian}, $l$ as a \emph{longitude}. With a painless abuse, we will use the same terminology for the images of $m$ and $l$ in the knot group. Actually, thanks to the aforementioned injectivity, we shall think of the whole $H$ as a subgroup of $G_L$ and simply make no distinction at all between $m$ and $l$ and their respective images. Under that convention, $H$ is called a \emph{peripheral subgroup}. In particular, we can arrange [the image of] $m$ to coincide with a generator of the knot group.

Either using topological intuition or looking at the shape of the Wirtinger relations, one can easily see that all the generators of $G_L$ are conjugate to either $m$ or its inverse (for this reason, we will extend our abuse of language and call \emph{meridian} every generator of $G_L$). As a consequence, $H_1(X)=G_L^{ab}=G_L/[G_L,G_L]$ is infinite cyclic generated by the class of $m$ (cf.\cite[Theorem 2A.1]{hatcher} for the first equality). In other words, there is an isomorphism $W\colon G_L/[G_L,G_L]\to\Z$ sending $m[G_L,G_L]$ to $1$. It is the morphism induced on the quotient by $\mathrm{lk}(\slot,L)$ and will be called the \emph{winding number}, since it counts the number of times a simple closed curve winds around the knot.

In fact, using the Mayer-Vietoris sequence in homology (cf. \cite[Section 2.2]{hatcher}) associated to the pair $(X,\overline V)$, along with the knowledge of the homology groups of $S^3$ and $S^1\times S^1$ (cf. \cite[Section 2.1]{hatcher}), we are able to determine the homology of $X$ completely:
\[
H_0(X)\cong H_1(X)\cong\Z,\quad H_i(X)=0 \mbox{ for all } i\geq 2.
\]
As a consequence of Papakyriakopoulos's Sphere Theorem (\cite{papa}), $X$ is aspherical (i.e., its higher homotopy groups vanish). In other words, it is an Eilenberg-MacLane space $K(G_L,1)$ for the group $G_L$. This means that the homology (with integer coefficients) of the group $G_L$ coincides with the homology of $X$ (cf. \cite[Section II.4]{brown}).

Some of the preceding features extend to links, some others do not. Specifically, if $L$ is a generic link, each component identifies a conjugacy class of peripheral subgroups, each generated by a meridian and a longitude, but the latter curve is rarely null-homologous in $X$. As an example, consider the Hopf link: up to isotopy, the longitude of one component serves as the meridian of the other. The generators of the link group are partitioned into conjugacy classes. Two generators are conjugate if and only if they are meridians relative to the same link component. This means that $H_1(X)=G_L^{ab}$ is free abelian generated by the classes of the meridians of the components of $L$. The complete description of the homology of $X$ is
\[
H_0(X)\cong\Z,\;\: H_1(X)\cong\Z^d,\;\: H_2(X)\cong\Z^{d-1},\;\: H_i(X)=0 \mbox{ for all } i\geq 3.
\]

Unfortunately, link complements need not be aspherical: in fact, if a link $L$ is split, then obviously any splitting $2$-sphere cannot be homotoped to a point within $S^3\setminus L$. However, this is the only obstruction to asphericity (again by Papakyriakopoulos's Sphere Theorem), so that a link complement is aspherical if and only if the link is non-split, if and only if its link group has deficiency $1$. Again, if this is the case, then the homology of the link group coincides with the homology of the link complement.

\chapter[Algebraic number theory]{A crash course in algebraic number theory}\label{chp:ant}
General references for this chapter are \cite{lang_ANT} and \cite{neukirch}.

Algebraic number theory is the study of finite field extensions of $\Q$. Fix such an extention $K/\Q$; $K$ is called an \emph{algebraic number field}. The set of \emph{algebraic integers} of $K$
\[
\OO_K=\{a\in K\mid a\mbox{ is a root of a monic }f\in\Z[T]\}
\]
is a ring whose field of fractions is $K$ itself. This ring plays in $K$ the same role as $\Z$ in $\Q$.

In general, $\OO_K$ is not a principal ideal domain, yet it enjoys a good deal of the properties of $\Z$: in particular, it is noetherian and integrally closed and each of its nonzero prime ideals is maximal. In other words, it is a \emph{Dedekind domain}. As a consequence, even if $\OO_K$ may not be a unique factorisation domain, it does have the property of \emph{unique factorisation of ideals}:
every ideal of $\OO_K$ can be uniquely factored (up to the order of factors) into a product of finitely many prime ideals.

Taking inspiration from the behaviour of $\Q$ with respect to the primes of $\Z$, we define a \emph{fractional ideal} of $\OO_K$ to be a $O_K$-submodule $M$ of $K$ such that there exists a $c\in\OO_K\setminus\{0\}$ for which $cM\subseteq\OO_K$. Note that $cM$, and hence $M$, are finitely generated thanks to noetherianity of $\OO_K$.
Fractional ideals serve as multiplicative inverses of ``classical'' ideals of $\OO_K$. More precisely,
the set of nonzero fractional ideals of $\OO_K$ form a group under multiplication. It is called the \emph{ideal group} of $\OO_K$ and denoted $I_K$.
The \emph{principal fractional ideals} are the fractional ideals $k\OO_K$ generated by a single element $k\in K$. With the exclusion of $0\OO_K$, they form a subgroup $P_K\leq I_K$ and the quotient
\[
Cl_K=I_K/P_K
\]
is the \emph{ideal class group} of $K$. It measures the obstruction to $\OO_K$ being a principal ideal domain.

\section{Ramification of prime ideals}\label{sec:ramification of ideals}
Let $K$ be an algebraic number field. A prime ideal $Q\id\OO_K$ is said to \emph{lie above} a prime ideal $P\id\Z$ if $Q\cap\Z=P$ (notation: $Q|P$). For all prime ideals $P\id\Z$ there exists a prime ideal $Q\id\OO_K$ with $Q|P$, but such $Q$ may not be unique. This is related with how $P$ decomposes in $\OO_K$.

Indeed, it might very well happen that the \emph{lift} $P\OO_K$ of a prime ideal $P\id\Z$ does not stay prime in $\OO_K$. In general it has a factorization (unique up to permutations)
\[
P\OO_K=Q_1^{e_1}\cdots Q_r^{e_r}
\]
into finitely many prime ideals of $\OO_K$. The $Q_i$s are precisely the prime ideals of $\OO_K$ that lie above $P$.
The natural number $e_i$ is called the \emph{ramification index} of $Q_i$ over $P$. $P$ is \emph{unramified} in $K$ if, for all $i=1,\dots,r$, $e_i=1$ and the residue field extension $(\OO_K/Q_i)/(\Z/P)$ is separable, otherwise it is \emph{ramified}. We say that $P$ \emph{splits completely} in $K$ if $r=[K\colon\Q]$. If, on the contrary, $P$ does lift to a prime ideal in $K$, we call it \emph{inert}.

The above terminology is also extended to arbitrary extensions $K/F$ of algebraic number fields, where the same decomposition phenomena might happen.

\begin{ex}
Consider the degree $2$ extension $\Q(i)/\Q$. The associated extension of the rings of algebraic integers is $\Z[i]/\Z$. It is easy to see that odd primes of $\Z$ are congruent modulo $4$ to either $1$ or $3$. Fermat's Theorem on the sum of two squares (cf. \cite[Art. 182]{gauss}) states that an odd prime $p$ of $\Z$ is a sum of $2$ squares if and only if $p\equiv1\mbox{mod }4$. As a consequence,
\begin{itemize}
 \item the lift of $(2)\id\Z$ decomposes as $(2)\Z[i]=(1+i)^2$: the ideal $(2)$ ramifies in $\Q(i)$;
 \item the lift of a prime $p\equiv1\mbox{mod }4$ splits into $2$ prime ideals, $(p)\Z[i]=(a+ib)(a-ib)$: in othe words, $(p)$ splits completely in $\Q(i)$;
 \item the lift of a prime $p\equiv3\mbox{mod }4$ remains prime in $\Z[i]$, as it cannot be decomposed in $\Z$ and the only new chance of decomposition in $\Z[i]$ is ruled out by Fermat's Theorem: $(p)$ is inert in $\Q(i)$.
\end{itemize}
Obviously this is a sandbox example, since $\Z[i]$ is a principal ideal domain and prime ideals may be identified with prime elements. In real life, finding the decomposition of even a single prime in some algebraic number field extension might be painful.
\end{ex}

Ramification is a \emph{local} property, in the following sense.
The localization $A_P$ of a Dedekind domain $A$ at a nonzero prime ideal $P$ produces a ring in which the only nonzero prime (and maximal) ideal is $(A\setminus P)^{-1}P$, that is, a \emph{local ring}. Actually, a very special local ring: it is a local Dedekind domain and not a field, i.e., a \emph{discrete valuation ring}. In some sense, the localization process is a zoom lens that focuses only on what happens near $P$, disregarding what the domain is like elsewhere, that is near the other primes. It goes without saying that the behaviour of the ideals in a discrete valuation ring is far more understandable than in a general Dedekind domain, since there one has to take care of just one nonzero prime ideal and all the other nonzero ideals are powers of it.
For this reason, the nicest properties of ideals in Dedekind rings are those unaltered by the localization process, as they can be studied in the quiet environment of a localization. The ramification index is such a property (cf. \cite[Section I.7]{lang_ANT}).
\section{Valuations and completions}\label{sec:valuations}
The name \emph{discrete valuation ring} suggests that this algebraic structure should have an equivalent definition by completely different means.
In fact, this is true and allows us to give two alternative descriptions of prime ideals in rings of algebraic integers.

An \emph{ordered group} is an abelian group $A$ endowed with a total order compatible with the group operation $+$ (that is, $a\leq b\Rightarrow a+c\leq b+c$).
The corresponding \emph{ordered group closure} is $\overline A=A\sqcup\{\infty\}$, where the new symbol $\infty$ is subject to the rules
\[\begin{array}{l}
\infty+\infty=a+\infty=\infty+a=\infty\\
a<\infty
\end{array}
\]

A \emph{valuation} on a field $F$ is a map $\nu\colon F\to \overline{A}$, with values in an ordered group closure, satisfying
\begin{enumerate}
\item[(v1)] $\nu(x)=\infty \Leftrightarrow x=0$;
\item[(v2)] $\nu(xy)=\nu(x)+\nu(y)$;
\item[(v3)] $\nu(x+y)\geq\min\{\nu(x),\nu(y)\}$.
\end{enumerate}
The subgroup $\nu(F^*)\leq A$ is the \emph{value group} of $\nu$.
In particular, every field admits the \emph{trivial valuation}, defined by $\nu_{tr}(x)=0$ for all $x\in F\neq 0$, which from now on we exclude unless otherwise specified.

Two valuations $\nu,\mu\colon F\to\overline\R$ are equivalent if there is a real number $s>0$ such that $\mu=s\nu$. A valuation with values in $\overline R$ is \emph{discrete} if its value group is $\Z$ up to equivalence.

An \emph{absolute value} of a field $F$ is a map $|\slot|\colon F\to\R$ satisfying
\begin{enumerate}
\item[(av1)] $|x|\geq 0$ and $|x|=0 \Leftrightarrow x=0$;
\item[(av2)] $|xy|=|x||y|$;
\item[(av3)] $|x+y|\leq|x|+|y|$.
\end{enumerate}
In particular, every field admits the \emph{trivial absolute value}, defined by $|x|_{tr}=1$ for all $x\in F^*$, which from now on we exclude unless otherwise specified.

The other absolute values fall into two species: the \emph{nonarchimedean} (or \emph{ultrametric}) absolute values are those satisfying the stronger version of Axiom (av3)
\begin{enumerate}
\item[(av3')] $|x+y|\leq\max\{|x|,|y|\}$;
\end{enumerate}
the other absolute values are called \emph{archimedean}.

Every absolute value on $F$ turns it into a metric space via the distance $d(x,y)=|x-y|$.
Two absolute values $|\slot|_1$ and $|\slot|_2$ of $F$ are \emph{equivalent} if they induce the same topology on $F$. This happens if and only if there is a real number $t>0$ such that $|\slot|_2=|\slot|_1^t$. Obviously an archimedean absolute value cannot be equivalent to a nonarchimedean one.
A \emph{place} of an algebraic number field $K$ is a class of equivalent absolute values of $K$. We will usually simply identify a place with each of its representatives.

Every nonarchimedean absolute value $|\slot|$ of $K$ induces a discrete valuation
\[
\nu\colon K\to\overline\R,\quad\nu(x)=\begin{cases}-\log|x|&x\neq 0\\ \infty&x=0
\end{cases}
\]
Viceversa, every valuation $\nu\colon K\to\overline\R$ induces an absolute value
\[
|\slot|\colon K\to\R,\quad|x|=q^{-\nu(x)}
\]
for a fixed $q>1$.
Equivalent discrete valuations induce equivalent nonarchimedean absolute values, or, in other words, induce the same nonarchimedean place.

Moreover, for a nonarchimedean place $|\slot|$ of $K$ and the corresponding discrete valuation $\nu$, the subset
\[
\OO_\nu=\{x\in K\mid |x|\leq 1\}=\{x\in K\mid \nu(x)\geq 0\}
\]
is a discrete valuation ring with maximal ideal
\[
P_\nu=\{x\in\OO\mid |x|<1\}=\{x\in K\mid \nu(x)> 0\}.
\]
Conversely, a prime ideal $P\id \OO_K$ determines a discrete valuation by setting 
\[
\nu_P(x)= v_P\quad \mbox{ where }(x)=\prod_{P_i\id\OO_K\mbox{ prime}}P_i^{v_{P_i}}
\]
and this valuation can be turned into a nonarchimedean place.
The associated discrete valuation ring satisfies
\[
(\OO_K)_P=\left\{x=\frac{a}{s}\in K\mid a\in\OO_K,s\in\OO_K\!\setminus\! P\right\}=\left\{x\hskip-0.5pt\in\hskip-0.5pt K\mid \nu_P(x)\geq 0\right\}=\OO_{\nu_P}
\]
Summing up, there is a correspondence 
\begin{equation}\label{eq:valuations&primes}
\xymatrix{
\mbox{nonarchimedean places of }K\ar@{<~>}[d]\\
\mbox{equivalence classes of discrete valuations on }K\ar@{<~>}[d]\\
\mbox{localizations of }\OO_K\mbox{ at nonzero prime ideals}\ar@{<~>}[d]\\
\mbox{nonzero prime ideals of }\OO_K
}
\end{equation}
In view of which archimedean places can be thought as ``\mbox{primes at infinity}''.

A field with absolute value $(F,|\slot|)$ (and associated valuation $\nu$) is \emph{complete} if every sequence in $F$ that is Cauchy with respect to the metric induced by $|\slot|$ converges in $F$. The \emph{completion} $F_\nu$ of $(F,|\slot|)$ is the field obtained quotienting the ring of Cauchy sequences in $F$ by the maximal ideal of sequences converging to $0$. $F$ embeds into $F_\nu$ as the subfield of [classes of] constant sequences and $|\slot|$ admits a unique extension to $F_\nu$, defined by $|\overline{(a_n)_{n\in\N}}|=\lim_{n\to\infty}|a_n|$. The completion of a field with absolute value is essentially unique and is complete with respect to the aforementioned extension of the absolute value.

Let $K/F$ an algebraic extension of number fields. As explained in \cite[Sections II.4,II.6,II.7,II.8]{neukirch}, a valuation $\nu$ of $F$ can be extended to a valuation $\mu$ of $K$, once a particular $F$-embedding of $K$ into the algebraic closure of $F_\nu$ is given. We borrow the same notation $\mu|\nu$ used for prime ideals. Let $K^\mu$ denote the compositum (union) of the completions $(K_i)_{\mu}$ of the \emph{finite} subextensions $K_i/F$ of $K/F$.
In particular, if $K/F$ is finite, then $L^\mu=L_\mu=LK_\nu$ is the completion of $L$. The advantage in passing from the language of ideals to that of valuations lies exactly in the possibility of passing from an arbitrary extension $K/F$ of number fields to the various \emph{completion extensions} $K_\mu/F_\nu$, which are easier to understand.

In view of the correspondence \eqref{eq:valuations&primes} we can speak of \emph{ramified} and \emph{unramified} valuations by looking at the corresponding ideals. The ramification indices have a characterization in the language of valuations.
If $\nu$ is a nonarchimedean valuation on $F$ and we extend it to a valuation $\mu$ on $K$, then the \emph{ramification index} of the extension $\mu|\nu$ is
\[
e_\mu=[\mu(K^*)\colon\nu(F^*)]
\]
The prime ideal $Q_i\id\OO_K$ corresponding to $\mu$ lies above the prime ideal $P\id\OO_F$ corresponding to $\nu$ and the ramification index of $Q_i$ over $P$ coincides with the ramification index of $\mu$ over $\nu$.

In conclusion, if $K/F$ is a Galois extension of number fields, it makes sense to speak of the places of $F$ that ramify in $K$. With a little abuse, we will also use the expression \emph{archimedean place} to mean an equivalence class of archimedean valuations.

\section{Galois cohomology}
The \emph{(absolute) Galois group} of an algebraic number field $K$ is the Galois group $G_K$ of its separable closure. The \emph{(Galois) cohomology groups} of $K$ with coefficients in a $G_K$-module $A$ are
\[
H^i(K,A)=H^i(G_K,A).
\]
Given a valuation $\nu$ on $K$, one can view a finite $G_K$-module $A$ as a $G_{K_\nu}$-module and define the cohomology groups $H^i(K_\nu,A)$ (with the convention that $H^0(K_\nu,A)$ denotes the \emph{modified} cohomology group, cf.\cite[Section I.2]{NSW}).
There is a canonical homomorphism
\begin{equation}\label{eq:'nghu}
H^i(K,A)\to H^i(K_\nu,A),
\end{equation}
which, on the level of Galois groups, is given by the restriction map
\[
H^i(G_K,A)\to H^i(G_{K_\nu},A)
\]
(cf. \cite[Section II.6]{serre}).

The morphisms \eqref{eq:'nghu} for varying $\nu$ can be bundled together in a homomorphism
\begin{equation}\label{eq:'nghu piu grosso}
H^i(K,A)\to \prod_{\nu} H^i(K_\nu,A).
\end{equation}

Let now $K/F$ be a Galois extension of algebraic number fields and let $S$ be a finite set of places of $F$ containing all the places that ramify in $K$ and the subset $S_\infty$ of all the archimedean places.
We denote $G_S$ the Galois group of the maximal Galois extension unramified outside $S$.
Let $A$ be a finite $G_S$-module whose order is a $S$-unit, that is, $\nu(|A|)=0$ for all $\nu\notin S$. Letting $\OO_S^*=\{x\in F^*\mid \nu(x)=0 \mbox{ for }\nu\notin S\}$, the \emph{dual} module of $A$ is $A'=\hom(A,\OO_S^*)$.

Let $E_{nr}$ denote the maximal unramified extension of the field $E$. For $\nu\notin S$, $Gal(\overline{F_\nu}/(F_\nu)_{nr})$ acts trivially on $A$, hence $A$ can be considered a $Gal((F_\nu)_{nr}/F_\nu)$-module, hence the groups
\[
H^i_{nr}(F_\nu,A)=H^i((F_\nu)_{nr}/F_\nu,A)
\]
are defined (cf.\cite[\S\ II.5.5]{serre}).
Let 
\[
P^i(F_S,A)=\left\{(c_\nu)\in\prod_{\nu}H^i(F_\nu,A)\mid c_\nu\in H^i_{nr}(F_\nu,A)\mbox{ for almost all }\nu\right\}.
\]
The morphism \eqref{eq:'nghu piu grosso} maps $H^i(F,A)$ to $P^i(F_S,A)$ (cf. \cite[Proposition 8.6.1]{NSW}); the kernel of this map is the \emph{$i$-th Shafarevi\v{c} group}. Moreover, there is a $9$-term exact sequence
\begin{equation}\label{eq:poitou-tate ANT}
\xymatrix{
0\ar[r] & H^0(F_S,A)\ar[r] & P^0(F_S,A)\ar[r] & H^2(F_S,A')^\vee\ar[dll]&\\
& H^1(F_S,A)\ar[r] & P^1(F_S,A)\ar[r] & H^1(F_S,A')^\vee\ar[dll]&\\
& H^2(F_S,A)\ar[r] & P^2(F_S,A)\ar[r] & H^0(F_S,A')\ar[r]^\vee&0\\
}
\end{equation}
called Poitou-Tate exact sequence (cf.\cite[(8.6.10)]{NSW}).
Here, for a torsion abelian group $M$, $M^\vee=\hom_{cont}(M,\Q/\Z)$ denotes the \emph{Pontryagin dual} of $M$ (cf. \cite[Section I.1]{serre}).
The groups $P^i(F_S,A)$ encode the local information about the field $F$, so Poitou-Tate sequence connects the local and the global behaviour of an algebraic number field. In the same spirit, Shafarevi\v{c} groups measure the amount of information lost passing from the global to the local point of view.

\chapter[Classifying spaces]{Classifying spaces of knot groups}\label{chp:classSpaces}
Let $K\subseteq S^3$ be a knot, with knot group $G$ and a tubular neighbourhood $V$, and set $X=S^3\setminus V$.
The space $X$ is connected, locally path-connected and semilocally simply connected, hence it admits a universal cover $\widetilde X$.

These two spaces have a rich topological structure: $X$ is a CW-complex whose higher homotopy groups are trivial as a consequence of Papakyriakopoulos's Sphere Theorem (cf. \cite{papa}). It follows that $\widetilde X$ is a free, contractible $G$-CW-complex. Moreover, the covering map $\pi\colon\widetilde X\to X$ is the projection to the quotient of the $G$-action and it coincides with the universal principal $G$-bundle $EG\to BG$.

Now, $X$ is obtained by removing an open solid torus from $S^3$: what modifications would affect the picture if one tried to glue $V$ back? That is, more precisely:
\begin{question}\label{qu:extend class sp}
Is it possible to find another contractible $G$-CW-complex $Z$ that includes $\widetilde X$ as a $G$-CW-subcomplex and such that the restriction of the projection $\varpi:Z\stackrel{\cdot/G}{\to}S^3$ to $\widetilde X$ coincides with $\pi$? And if so, what topological properties would the nicest such $Z$ have?
\end{question}

The $G$-action on $Z$ cannot be free, since each meridian of $G$ fixes some points in the preimage of $K$. Thus the best one can achieve is that $Z$ be a contractible $G$-CW-complex such that
\begin{enumerate}
\item $\varpi$ behaves like a covering map except over $K$; in a neighbourhood of each point of $K$, it acts like an analogue of an integer power map in a neighbourhood of $0\in\C$, but with ``infinite exponent'';
\item only [the cyclic subgroups generated by each of] the meridians fix some points;
\item these nonempty fixed-point sets are contractible.
\end{enumerate}
The first requirement is formalised in the concept of \emph{branched} (or \emph{ramified}) \emph{covering}, that we state here in the finite case only, since it is the case we will always deal with in the future.
\begin{defn}
Let $M$ and $N$ be $3$-manifolds and $f\colon M\to N$ a continuous surjection. Define $B_M=\{x\in M\mid f\mbox{ is not a homeomorphism in a}$ $\mbox{neighbourhood of }x\}$ and $B_N=f(B_M)$. Finally, view $D^2=\{z\in\C\mid|z|\leq 1\}$ and let, for a positive integer $e$, $pw_e\colon D^2\to D^2$, $pw_e(z)=z^e$. Then $f$ is a \emph{finite branched covering map} branched over $B_N$ if
\begin{itemize}
\item[(a)] $f|_{M\setminus B_M}\colon M\setminus B_M\to N\setminus B_N$ is a covering;
\item[(b)] for any $x\in B_M$, there are (closed) neighbourhoods $U$ of $x$ in $M$ and $W$ of $f(x)$ in $N$ and homeomorphisms $\alpha\colon U\to D^2\times [0,1]$ and $\beta\colon W\to D^2\times [0,1]$ such that for some positive integer $e$
\[
\xymatrix{
U\ar^-{f}[rr]\ar^-{\alpha}[d] && V\ar^-{\beta}[d]\\
D^2\times [0,1]\ar^{pw_e\times id_{[0,1]}}[rr] && D^2\times [0,1]
}
\]
is a commutative diagram. The number $e$ is the \emph{ramification index} at $x$.
\end{itemize}
\end{defn}
%
The last two requirements are encoded in the concept of \emph{classifying space for a family of subgroups}.

\section{Classifying spaces}
\begin{defn}\label{def:family}
Let $G$ be a group. A \emph{family of subgroups} (or simply \emph{family}) of $G$ is a set of subgroups of $G$ closed under conjugation and taking subgroups.

The family \emph{generated} by a subgroup $H$ consists of all conjugates of $H$ and all their subgroups and will be denoted $\F(H)$. When we will be concerned with families generated by infinite cyclic groups, we will briefly call $\F(\gen{a_1},\dots,\gen{a_n})$ the family of $a_1,\dots,a_n$.
\end{defn}

\begin{defn}
Let $\F$ be a family of the group $G$. A \emph{[model for the] classifying space} $E_\F(G)$ \emph{of the family} $\F$ is a $G$-CW-complex $X$ such that for all subgroups $H\leq G$ the fixed point set 
\[
 X^H=\left\{ x\in X\mid\forall h\in H:h(x)=x\right\}
\]
is contractible if $H\in\F$ and empty otherwise.
\end{defn}
The space $Z$ we are looking for in Question \ref{qu:extend class sp} is nothing but the classifying space of a knot group for the family of the meridians.

\begin{rem}
There is a more abstract equivalent definition of classifying space of $\F$ as a terminal object in the $G$-homotopy category of $G$-CW-complexes whose isotropy groups belong to $\F$ (cf. \cite{luecksurvey}). This is interesting in that it automatically guarantees the uniqueness of models up to $G$-homotopy, but it is useless for \emph{finding} models.

As regards existence, it is guaranteed for all families of all groups by Milnor's construction (cf. \cite{milnor1956}). Unfortunately, that construction often gives a too big space to deal with.
In fact, usually one is interested not only in the mere existence of of such spaces, but in performing effectively some algebraic topology on them. A fundamental issue, then, is to find models that are as simple and concrete as possible, in order
to carry explicit computations.
\end{rem}

This issue of course affects also our situation: now that we know the first part of Question \ref{qu:extend class sp} has a positive answer, it is time to address the second part. As said, lots of information are encoded in the total space $EG$ of the universal principal bundle of the knot group $G$ (that is, the universal cover $\widetilde X$). On the other hand, $EG$ can be seen as the classifying space of the trivial family $\Big\{\{1\}\Big\}$, which the family of meridians is a superset of. Is it then possible to use $EG$ as a kind of foundations which to construct the classifying space $Z$ on? This is a particular instance of the problem of building classifying spaces for larger families from those for smaller ones.

A result by L\"uck and Weiermann (\cite{luewei2007}) does the trick.
Let $\F\subseteq\G$ be two families of the group $G$. An equivalence relation $\sim$ on $\G\setminus\F$ with the additional properties
\begin{eqnarray}
H,K\in \G\setminus\F, H\subseteq K \Longrightarrow H\sim K\label{tilde1}\\
\label{tilde2} H,K\in \G\setminus\F, H\sim K \Longrightarrow \forall g\in G: g^{-1}Hg\sim g^{-1}Kg
\end{eqnarray}
will be called a \emph{strong equivalence relation}.
Given such $\sim$, denote $[G\setminus H]$ the set of $\sim$-equivalence classes and $[H]$ the $\sim$-equivalence class of $H$ and define the subgroup of $G$
$$
N_G[H]=\{g\in G\mid [g^{-1}Hg]=[H]\}.
$$
Property (\ref{tilde2}) says that the conjugation of subgroups induces an action on $[G\setminus H]$ with respect to which $N_G[H]$ is the stabiliser of $[H]$.
Finally construct the family of subgroups of $N_G[H]$
$$
\G[H]=\{K\leq N_G[H]\mid K\in\G\setminus \F, [K]=[H]\}\cup (\F\cap N_G[H]),
$$
where $\F\cap N_G[H]$ is a shorthand for $\{K\leq N_G[H]\mid K\in\F\}$.
\begin{thm}[L\"uck \& Weiermann]\label{thm:lueckweiermann} Let $I$ be a complete system of representatives $[H]$ of the $G$-orbits in $[\G\setminus \F]$ under the $G$-action coming from conjugation. Choose arbitrary $N_G[H]$-CW-models for $E_{\F\cap N_G[H]}(N_G[H])$ and $E_{\G[H]}(N_G[H])$,
and an arbitrary $G$-CW-model for $E_\F(G)$. Define a $G$-CW-complex $Y$ by the
cellular $G$-pushout
\begin{equation}\label{eq:pushout diagram}
\xymatrix{
\bigsqcup_{[H]\in I}G\times_{N_G[H]}E_{\F\cap N_G[H]}(N_G[H])\ar^-\phi[rrr]\ar^{\bigsqcup_{[H]\in I}id_G\times\psi_{[H]}}[d]&&& E_\F G\ar[d]\\
\bigsqcup_{[H]\in I}G\times_{N_G[H]}E_{\G[H]}(N_G[H])\ar[rrr]&&&Y
}
\end{equation}
such that the $G$-map $\phi$ is cellular and all the $N_G[H]$-maps $\psi_{[H]}$ are inclusions or viceversa.

Then $Y$ is a model for $E_\G G$.
\end{thm}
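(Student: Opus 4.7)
The plan is to verify the defining property of $E_\G G$ directly: show that $Y$ is a $G$-CW-complex and that for every subgroup $K \leq G$, the fixed-point set $Y^K$ is contractible when $K \in \G$ and empty otherwise. The $G$-CW-structure on $Y$ is immediate from its description as a cellular $G$-pushout. The crucial general fact is that the $K$-fixed-point functor commutes with pushouts along cellular inclusions, so $Y^K$ is the pushout of the three $K$-fixed corners. Combined with the standard identity
\[
(G \times_{N_G[H]} Z)^K = \bigsqcup_{gN_G[H]:\, g^{-1}Kg \leq N_G[H]} Z^{g^{-1}Kg},
\]
this reduces everything to fixed-point properties of the prescribed $N_G[H]$-models $E_{\F \cap N_G[H]}(N_G[H])$ and $E_{\G[H]}(N_G[H])$, governed by the respective families.

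A case analysis on $K$ handles most of the work. If $K \notin \G$, no conjugate of $K$ lies in $\F$, in $\F \cap N_G[H]$, or in $\G[H]$, so every corner has empty $K$-fixed-point set and $Y^K = \emptyset$. If $K \in \F$, the corner $(E_\F G)^K$ is contractible, and for each $[H] \in I$ the upper-left and lower-left corners are indexed by the same set of cosets $\{gN_G[H] : g^{-1}Kg \leq N_G[H]\}$ with contractible summands (the condition $g^{-1}Kg \in \F$ is automatic and places the conjugate in both subfamilies). The restriction of the left vertical map to $K$-fixed points is therefore a disjoint union of inclusions between contractible spaces, hence a trivial cofibration; the gluing lemma for pushouts along cofibrations then yields $Y^K \simeq (E_\F G)^K \simeq \ast$.

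The main obstacle is the case $K \in \G \setminus \F$, which requires isolating exactly one contributing stratum. The class $[K]$ lies in a unique $G$-orbit of $[\G \setminus \F]$, represented by some $[H] \in I$; choose $g_0$ with $[K] = [g_0 H g_0^{-1}]$, so that $g_0^{-1}Kg_0 \sim H$ by property (\ref{tilde2}). The corner $(E_\F G)^K$ vanishes, and so do all upper-left summands, since $K \notin \F$ means no conjugate lies in $\F$. In the lower-left corner, only the term for $[H]$ can contribute, and only through the single coset $g_0 N_G[H]$: indeed, $g^{-1}Kg \in \G[H']$ forces $[g^{-1}Kg] = [H']$ (because $g^{-1}Kg \notin \F$), which pins down $[H'] = [H]$ and $g \in g_0 N_G[H]$. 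The delicate remaining point is to check $g_0^{-1}Kg_0 \leq N_G[H]$, so that this stratum really lies in $\G[H]$: for $k \in g_0^{-1}Kg_0$, applying (\ref{tilde2}) to $g_0^{-1}Kg_0 \sim H$ with conjugating element $k$, and using $k^{-1}(g_0^{-1}Kg_0)k = g_0^{-1}Kg_0$, yields $H \sim k^{-1}Hk$, i.e.\ $k \in N_G[H]$. Hence $g_0^{-1}Kg_0 \in \G[H]$, the space $E_{\G[H]}^{g_0^{-1}Kg_0}$ is contractible, and $Y^K$ becomes the pushout $\emptyset \leftarrow \emptyset \to E_{\G[H]}^{g_0^{-1}Kg_0}$, which is contractible. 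This completes the verification.
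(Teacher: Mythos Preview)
The paper does not give its own proof of this theorem: it is quoted as a result of L\"uck and Weiermann \cite{luewei2007} and used as a black box. So there is no ``paper's proof'' to compare against; your proposal is in fact a reconstruction of the original argument from \cite{luewei2007}, and it is essentially correct. The key ingredients --- commutation of $K$-fixed points with cellular pushouts, the induced-space fixed-point formula, and the case split on $K\notin\G$, $K\in\F$, $K\in\G\setminus\F$ --- are exactly the ones L\"uck and Weiermann use.

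Two small points. First, in the case $K\in\F$ you tacitly assume the $\psi_{[H]}$ are inclusions when you call the left vertical map a trivial cofibration; the theorem's hypothesis allows the ``viceversa'' situation in which $\phi$ is the inclusion and the $\psi_{[H]}$ are merely cellular. In that case the left vertical map on $K$-fixed points is still a homotopy equivalence (termwise a map between contractible spaces over the same index set), and since $\phi^K$ is then the cofibration, the pushout is still a homotopy pushout and $Y^K\simeq(E_\F G)^K$. So the conclusion stands, but strictly speaking you should say a word about the symmetric case. Second, your verification that $g_0^{-1}Kg_0\leq N_G[H]$ via property~\eqref{tilde2} is the genuinely delicate step and you have it right; this is precisely where the axioms on $\sim$ are used.
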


\section[Classifying spaces for meridians]{Classifying spaces for the meridians of a knot}
We wish to apply L\"uck and Weiermann's Theorem in this setting: $G=G_L$ the group of the knot $L$, $\F$ the trivial family, $\G$ the family generated by a generator of $G$.

Now, if the knot $L$ is trivial, then $\G$ is the set of all subgroups of $G\simeq\Z$, so $E_\G G$ is a point. This degenerate situation having been settled, in what follows we assume $L$ to be nontrivial.

\subsection{Structure of $\G$}
Let $a_1,\dots a_d$ be the generators of $G$ and select one among them, say $a:=a_1$. The others are conjugate to either $a$ or $a^{-1}$, as a consequence of the shape of the Wirtinger relators (cf. Chapter \ref{chp:knotTheory}). This means that $\G$ is the family generated by $a$, according to Definition \ref{def:family}. We will refer to $a$ as the \emph{chosen meridian}.
Thus the subgroups in the family have a nice explicit shape:
$$
\G=\{\gen{g^{-1}a^ig}\mid g\in G, i\in\Z\}.
$$
\begin{rem}\label{rem:GG partialy ordered}
In particular, $\G$ is a partially ordered set with respect to inclusion, it has maximal elements $\gen{g^{-1}ag}, g\in G$ and each element of $\G$ lies in one of those.
\end{rem}

\subsection{Definition of $\sim$}
If we fix a $H\in\G\setminus\F$ and we set $\uparrow\! H:=\{K\in\G\setminus\F\mid K\geq H\}$ (the \emph{up-set} of $H$), $\downarrow\! H:=\{K\in\G\setminus\F\mid K\leq H\}$ (the \emph{down-set} of $H$) and $\updownarrow\! H:=\uparrow\! H\cup\downarrow\! H$, then Property \eqref{tilde1} of strong equivalence relations says that $\updownarrow\! H\subseteq [H]$. Clearly, an equivalence relation for which the reverse inclusion holds has very little hopes to exist in general and no hopes at all in our setting, as for example $\gen{a^{2i}}\in [\gen{a^{3i}}]\setminus\updownarrow\!\gen{a^{3i}}$. On the other hand, it is natural to ask the equivalence relation to be as fine as possible.

Consider the relation
$$
H\sim K \Longleftrightarrow \exists L\in\G\setminus\F :L\leq H,L\leq K;
$$
it is clearly reflexive and symmetric.
\begin{lem}
The following statements are equivalent:
\begin{description}
\item[(i)] $\sim$ is transitive;
\item[(ii)] $\forall H\in\G\setminus\F :\, \downarrow\! H$ is a downward-directed set with respect to inclusion;
\item[(iii)] $\forall\mbox{ maximal }H\in\G\setminus\F :\, \downarrow\! H$ is a downward-directed set with respect to inclusion.
\end{description}
(A partially ordered set $(P,\preceq)$ is downward-directed if for all $x,y\in P$ there is a $z\in P$ such that $z\preceq x$ and $z\preceq y$).
\end{lem}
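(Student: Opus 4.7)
The plan is to prove the equivalence via the cycle (i) $\Rightarrow$ (ii) $\Rightarrow$ (iii) $\Rightarrow$ (i). The implication (ii) $\Rightarrow$ (iii) is immediate, since maximal elements are in particular elements, so the real content lies in the other two arrows; both are pure order theory.

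The key structural fact I will exploit throughout is Remark \ref{rem:GG partialy ordered}: every element of $\G\setminus\F$ sits below one of the maximal elements $\langle g^{-1}ag\rangle$, $g\in G$. This lets me transfer downward-directedness of a down-set of a maximal element to the down-set of any element sitting beneath it.

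For (i) $\Rightarrow$ (ii), I would fix $H\in\G\setminus\F$ and let $H_1,H_2\in\downarrow\! H$. Each inclusion $H_i\leq H$ already witnesses $H_i\sim H$ (taking $L=H_i$ in the definition of $\sim$). Transitivity then forces $H_1\sim H_2$, which directly supplies a common lower bound $L\in\G\setminus\F$ with $L\leq H_1$ and $L\leq H_2$; from $L\leq H_1\leq H$ we get $L\in\downarrow\! H$, as required. For (iii) $\Rightarrow$ (i), suppose $H_1\sim H_2$ and $H_2\sim H_3$, with witnesses $L_{12}$ (below both $H_1$ and $H_2$) and $L_{23}$ (below both $H_2$ and $H_3$). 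I would pick a maximal $M\geq H_2$; then $L_{12},L_{23}\in\downarrow\! M$, so (iii) applied to $\downarrow\! M$ yields $L\in\G\setminus\F$ with $L\leq L_{12}\leq H_1$ and $L\leq L_{23}\leq H_3$, whence $H_1\sim H_3$.

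I do not anticipate any genuine obstacle: the argument is fully order-theoretic and uses nothing beyond Remark \ref{rem:GG partialy ordered} together with the trivial observation that any $K\in\G\setminus\F$ serves as a witness for $K\sim K'$ whenever $K\leq K'$. The one minor point to double-check is that every witness remains in $\G\setminus\F$, i.e.\ nontrivial; but this is automatic, since $\downarrow\! H$ is by definition contained in $\G\setminus\F$ and the common lower bounds I produce are bounded above by such elements.
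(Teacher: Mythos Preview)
Your proof is correct and follows essentially the same approach as the paper. The only cosmetic difference is organizational: the paper proves (i)$\Leftrightarrow$(ii) and (ii)$\Leftrightarrow$(iii) as two separate biconditionals (using $\downarrow\! H_2$ directly for (ii)$\Rightarrow$(i), and the existence of a maximal element above any $H$ for (iii)$\Rightarrow$(ii)), whereas you merge these into a single step (iii)$\Rightarrow$(i) by passing straight to a maximal $M\geq H_2$---the underlying ideas are identical.
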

\begin{proof}\
\begin{description}
\item[(i)$\Rightarrow$(ii)]
If $K_1, K_2\in\downarrow\! H$, then by definition $K_1\sim H$ and $H\sim K_2$, so that by transitivity $K_1\sim K_2$, which means $\exists L\in\G\setminus\F :L\leq K_1,L\leq K_2$ (obviously such a $L$ is in $\downarrow\! H$ by transitivity of $\leq$).
\item[(ii)$\Rightarrow$(i)] Let $H_1\sim H_2$ and $H_2\sim H_3$, that is $\exists K_1\in\G\setminus\F :K_1\leq H_1,K_1\leq H_2$ and $\exists K_2\in\G\setminus\F :K_2\leq H_2,K_2\leq H_3$. In particular $K_1, K_2\in\downarrow\! H_2$, so that by directedness there exists a $L\in\downarrow\! H_2\subseteq\G\setminus\F$ which is included in both. All the three $H_i$s include this $L$, hence $H_1\sim H_3$.
\[
\xymatrix{
H_1\ar@{-}[dr] && H_2\ar@{-}[dr]\ar@{-}[dl] && H_3\ar@{-}[dl]\\
& K_1\ar@{-}[dr] && K_2\ar@{-}[dl] & \\
&& L &&
}
\]
\item[(ii)$\Rightarrow$(iii)] Obvious.
\item[(iii)$\Rightarrow$(ii)] Each subgroup in $\G\setminus\F$ is included in a maximal one, and downward-directedness is inherited by subsets.
\end{description}
\end{proof}

As all the elements of $\G\setminus\F$ are infinite cyclic, $\sim$ is an equivalence relation via Condition (ii). It also satisfies the two additional axioms of strong equivalence relations.
Recalling Remark \ref{rem:GG partialy ordered}, we can find a maximal element in each $\sim$-equivalence class and we choose it as the representative of its class.

\subsection{$N_G[H]$ and $\G[H]$}
As far as we are concerned with knots, the conjugation action of $G$ on $[\G\setminus\F]$ is transitive by the very definition. So the stabilisers are all conjugate to one another and we only need to study $N_G[\gen{a}]$.

\begin{lem}\label{lem:non-cable normalizer}
If $L$ is a nontrivial knot, then $N_G[\gen{a}]=C_G(a)$.
\end{lem}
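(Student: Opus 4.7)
The plan is to establish the two inclusions separately. The inclusion $C_G(a)\subseteq N_G[\gen{a}]$ is immediate: if $g\in C_G(a)$ then $g^{-1}\gen{a}g=\gen{a}$ as subgroups of $G$, so the conjugate is not merely $\sim$-equivalent to $\gen{a}$ but literally equal to it.

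For the reverse inclusion, I would unwind $\sim$. Taking $g\in N_G[\gen{a}]$, the condition $g^{-1}\gen{a}g\sim\gen{a}$ provides some $L\in\G\setminus\F$ contained in both. Since $L\leq\gen{a}$ and $\gen{a}$ is infinite cyclic, we must have $L=\gen{a^k}$ for some $k\neq 0$; the condition $L\leq g^{-1}\gen{a}g$ then yields $a^k=g^{-1}a^j g$ for some $j\in\Z$. Torsion-freeness of $G$ (a consequence of the asphericity of the knot complement recorded in Chapter \ref{chp:knotTheory}) forces $j\neq 0$.

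Next I would apply the winding-number homomorphism $W\colon G\to\Z$ introduced in Chapter \ref{chp:knotTheory}. Since $W$ factors through the abelianisation it is invariant under conjugation, and sends every meridian to $1$, so
\[
k=W(a^k)=W(g^{-1}a^j g)=W(a^j)=j.
\]
Hence $g a^k g^{-1}=a^k$, i.e.\ $g\in C_G(a^k)$.

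It remains to upgrade $g\in C_G(a^k)$ to $g\in C_G(a)$, and this is where the nontriviality of the knot is used in an essential way; everything up to this point was purely formal manipulation of the definitions. The required input is a classical fact: for a nontrivial knot, the peripheral subgroup $H\cong\Z^2$ is maximal abelian and coincides with the centraliser of every nontrivial power of a meridian, so in particular $C_G(a^k)=C_G(a)$ for all $k\neq 0$. I regard this as the main obstacle of the proof, since the earlier chapters of the thesis do not develop the centraliser theory of knot groups; an external reference (for instance Burde-Zieschang) will have to be invoked at this last step to close the argument.
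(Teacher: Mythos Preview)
Your proof is correct and essentially identical to the paper's: both unwind $\sim$ to obtain $t^{-1}a^it=a^j$ for nonzero $i,j$, project to the abelianisation (your winding number $W$) to force $i=j$, and then invoke an external result that the centraliser of a peripheral element equals the centraliser of any of its nonzero powers. The paper cites \cite[Corollary 3.7]{jacshaperi} for this last step rather than Burde--Zieschang, but the structure is the same.
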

\begin{proof}
By the definition of $\sim$, a certain $t\in G$ lies in $N_G[\gen{a}]$ if and only if $\gen{t^{-1}at}\cap \gen{a}\neq\{1\}$, if and only if $\exists i,j\in\Z\setminus\{0\}$ such that $t^{-1}a^it=a^j$. If we project the last equality onto $G/[G,G]$, which is infinite cyclic generated by the image of $a$, we see that $i=j$. Hence $t$ centralises a power of $a$. But, according to \cite[Corollary 3.7]{jacshaperi},
the centraliser in $G$ of an element of the peripheral subgroup coincides with the centraliser of any power of the given element\footnote{I thank Iain Moffatt for pointing my nose on that reference.}.

Summing up, all $t\in N_G[\gen{a}]$ centralise $a$ and the viceversa is obvious.
\end{proof}
\begin{rem}
This also means that Remark \ref{rem:GG partialy ordered} can be strengthened: each element of $\G$ is included in a \emph{unique} maximal element.
\end{rem}

The same idea yields:
\begin{lem}\label{lem:non-cable family}
if $L$ is a nontrivial knot, then $\G[\gen{a}]=\all\gen{a}:=\,\downarrow\!\gen{a}\cup\big\{\{1\}\big\}$.
\end{lem}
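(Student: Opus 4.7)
The plan is to establish the two inclusions $\G[\gen{a}]\supseteq\all\gen{a}$ and $\G[\gen{a}]\subseteq\all\gen{a}$ separately. The trivial subgroup lies on both sides by construction ($\{1\}\in\F\cap N_G[\gen{a}]$ on the left and $\{1\}\in\all\gen{a}$ on the right), so the genuine content is to match the nontrivial members.

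For the inclusion $\supseteq$, take $K=\gen{a^n}\in\,\downarrow\!\gen{a}$ with $n\neq 0$. Then $K\in\G\setminus\F$ and $K\leq\gen{a}\leq C_G(a)=N_G[\gen{a}]$ by Lemma~\ref{lem:non-cable normalizer}. Moreover $K\sim\gen{a}$ by axiom \eqref{tilde1} (or directly from the definition of $\sim$, taking $L=K$), so $[K]=[\gen{a}]$, and $K\in\G[\gen{a}]$.

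The main work is the reverse inclusion. Take $K\in\G[\gen{a}]$ with $K\neq\{1\}$; by the explicit shape of the elements of $\G$, write $K=\gen{g^{-1}a^ig}$ for some $g\in G$ and $i\neq 0$. The condition $[K]=[\gen{a}]$ produces some $L\in\G\setminus\F$ with $L\leq K$ and $L\leq\gen{a}$; in particular there exists $n\neq 0$ with $a^n\in K$, i.e.\ $a^n=(g^{-1}a^ig)^m=g^{-1}a^{im}g$ for some nonzero $m\in\Z$. Projecting this identity onto the infinite cyclic abelianisation $G^{ab}\cong\Z$ (generated by the class of $a$) kills the conjugation, yielding $n=im$. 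Hence $g\,a^n g^{-1}=a^n$, so $g\in C_G(a^n)$, and by the very corollary \cite[Corollary 3.7]{jacshaperi} already invoked in Lemma~\ref{lem:non-cable normalizer} we have $C_G(a^n)=C_G(a)$. Therefore $g\in C_G(a)$, whence $g^{-1}a^ig=a^i$ and $K=\gen{a^i}\leq\gen{a}$, placing $K$ in $\downarrow\!\gen{a}\subseteq\all\gen{a}$.

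The only nontrivial step -- and thus the main obstacle -- is realising that the abstract equivalence $[K]=[\gen{a}]$ already pins down $g$ to centralise $a$: one extracts from it an honest power of $a$ sitting inside $K$, invokes triviality of conjugation in the abelianisation to identify the two exponents, and then Jaco-Shalen-Peri converts centralisation of $a^n$ into centralisation of $a$. Everything else is just unpacking the definitions of $\G[\gen{a}]$ and $\all\gen{a}$ and applying the previous lemma.
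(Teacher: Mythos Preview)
Your proof is correct and follows essentially the same route as the paper's: the paper writes $H=\gen{t^{-1}a^kt}$ and then says that ``by the same argument as in the previous lemma'' the conjugator $t$ lies in $C_G(a)$, whereas you have simply spelled out that argument in full (extract a common power of $a$ from $[K]=[\gen{a}]$, abelianise to match exponents, then invoke \cite[Corollary 3.7]{jacshaperi}). The only difference is the level of detail, not the idea.
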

\begin{proof}
One has just to observe that the simultaneous conditions $H\in\G\setminus\F$ and $[H]=[\gen{a}]$ 
mean $H=\gen{t^{-1}a^kt}$, for some $a\in\N^*$ and some $t\in G$ which, by the same argument as in the previous lemma, lies in $C_G(a)$. It follows that $H=\gen{a^k}\leq \gen{a}$.
\end{proof}

\subsection{Prime knots}
There is a binary operation, called \emph{knot sum}, that turns the set of oriented knot types in $S^3$ into a commutative monoid.
It is defined by the following procedure.
Consider disjoint projections of each knot on a plane. Find a rectangle in the plane a pair of opposite sides of which are arcs along each knot but is otherwise disjoint from the knots and such that there is an orientation of the boundary of the rectangle which is compatible with the orientations of the knots. Finally,
join the two knots together by deleting these arcs from the knots and adding the arcs that form the other pair of sides of the rectangle.
The resulting connected sum knot inherits an orientation consistent with the orientations of the two original knots, and the oriented ambient isotopy class of the result is well-defined, depending only on the oriented ambient isotopy classes of the original two knots.

A knot is \emph{prime} if it is nontrivial and it cannot be obtained as the knot sum of nontrivial knots.

When a knot is prime, it follows from Simon's results \cite{simon1976} that the centraliser of a meridian is the peripheral subgroup containing it, in symbols $C_G(a)=\gen{a,l}=H\simeq\Z^2$, where $l$ is a longitude corresponding to $a$. So Diagram (\ref{eq:pushout diagram}) becomes
\begin{equation}\label{eq:prime non-cable pushout diagram}
\xymatrix{
G\times_{\gen{a,l}}E\Z^2\ar^\phi[rrr]\ar^{id_G\times\psi}[d] &&& EG\ar[d]\\
G\times_{\gen{a,l}}E_{\all\gen{a}}\Z^2\ar[rrr]&&&E_\G G.
}
\end{equation}
The top-left space is a disjoint union of copies of $\R^2$ indexed by $G/\gen{a,l}$, glued via $\phi$ to the boundary components of the top-right space. This one is well-known to be a $3$-manifold, with boundary a disjoint union of planes indexed exactly by $G/\gen{a,l}$ (this way $\phi$ can clearly be chosen as to be a cellular map). As for the bottom-left corner, consider $\R^2$, on which $\gen{a,l}$ acts freely by integer translations, as a model for $E(\Z^2)$, and $\R$, on which $l$ acts by integer translations while $a$ fixes every point, as a model for $E(\Z^2)^{\gen{a}}$. Then the space in that corner can be chosen to be the disjoint union, again indexed by $G/\gen{a,l}$, of copies of the mapping cylinder $Cyl(\pi\colon\R^2\to\R)$ of the projection $E(\Z^2)\to E(\Z^2)^{\gen{a}}\simeq E(\Z^2)/\gen{a}$.
This choice guarantees the map $\psi$ to be the inclusion of $\R^2$ into the mapping cylinder, as required in L\"uck and Weiermann's Theorem \ref{thm:lueckweiermann}.
Finally, $E_\G G$ is the $3$-dimensional $G$-CW-complex obtained by gluing each $Cyl(\pi\colon\R^2\to\R)$, along its $\R^2$ copy, to one boundary component of $E(G)$.

Summing up, for a prime knot the classifying space $E_\G G$ is given by the pushout
\begin{equation}\label{eq:prime diagram explicit}
\xymatrix{
\bigsqcup_{G/H}\R^2\ar^\phi[rrr]\ar^{id_G\times\psi}[d] &&& EG\ar[d]\\
\bigsqcup_{G/H}Cyl(\pi\colon\R^2\to\R)\ar[rrr]&&&E_\G G.
}
\end{equation}

\subsection{Non-prime knots}\label{ssec:non-prime knots}
As Schubert showed in its doctoral thesis \cite{schubert1949}, any non-prime knot $L$ admits an essentially unique decomposition into a knot sum of prime factors $L=K_1\sharp\dots\sharp K_r$. According to Noga (\cite{noga1967}), the centralizer $C_{G'}(a)$ of the chosen meridian in the commutator subgroup of $G$ is free of rank equal to the number $r$ of prime factors in the above decomposition, and it is generated by the longitudes $l_1,\dots,l_r$ of those factors.

But then,
\begin{equation*}
\frac{C_G(a)}{C_{G'}(a)}=\frac{C_G(a)}{C_G(a)\cap G'}\simeq \frac{C_G(a)G'}{G'}\leq \frac{G}{G'}.
\end{equation*}
Since the last group is infinite cyclic, the first one is so as well. Moreover, since $a\in C_G(a)\setminus G'$, a generator of this group is $aC_{G'}(a)$.
This amounts to saying that there is a right-split exact sequence
\begin{equation*}
1\rightarrow C_{G'}(a)\rightarrow C_{G}(a)\rightleftarrows \gen{a}\rightarrow 1
\end{equation*}
or equivalently, since $a$ is central in $C_{G'}(a)$,
\begin{equation*}\label{eq:centralizer non-prime non-cable}
C_{G}(a)=\gen{a}\times C_{G'}(a)\simeq \Z\times F_r
\end{equation*}
(where $F_r$ denotes the free group of rank $r$).

In this setting, Diagram \eqref{eq:pushout diagram} is
\begin{equation}\label{eq:non-prime non-cable pushout diagram}
\xymatrix{
G\times_{\gen{a,l_1,\dots,l_r}}E(\Z\times F_r)\ar^\phi[rrr]\ar^{id_G\times\psi}[d] &&& EG\ar[d]\\
G\times_{\gen{a,l_1,\dots,l_r}}E_{\all\gen{a}}(\Z\times F_r)\ar[rrr]&&&E_\G G.
}
\end{equation}
Setting $\mathcal{C}(F_r)$ to be (the geometric realization of) the Cayley graph of $F_r$ (with respect to the generating set made of the generators of $F_r$ and their inverses), we can choose, as a model for the top-left corner, the disjoint union of copies of $\R\times \mathcal{C}(F_r)$ indexed by $G/\gen{a,l_1,\dots,l_n}$ and, as a model for the bottom-left corner, the mapping cylinder of the projection $\R\times \mathcal{C}(F_r)\rightarrow \mathcal{C}(F_r)$.

\section{The Hopf link}
With the same machinery we can also build a classifying space for the family of the meridans of the group $G=\gen{a,b\mid ab=ba}\simeq\Z^2$ of the Hopf link.
\[\knotholesize{4mm}
\xygraph{
!{0;/r1.5pc/:}
!{\vunder}
!{\vunder-}
[uur]!{\hcap[2]<><{b}}
[l]!{\hcap[-2]<<<{a}}
}
\]

In this link, the meridian of each component serves as the longitude of the other. Hence the classifying space is the double mapping cylinder $Cyl(\R\leftarrow \R^2\rightarrow\R)$ of the projections onto the two components of $EG=\R^2$. Here the copy of $\R$ which is the image of the first projection carries the translation action of $b$ and the trivial action of $a$ (i.e., it is $EG^{\gen a}$), and conversely for the other copy (which is then $EG^{\gen b})$.

There are two reasons that make this example interesting. The first is that it suggests a track of further research, namely the construction of classifying spaces for the meridians of \emph{links}. The second is that this is almost certainly the only case in which a classifying space of ours comes out well in a photograph.
\begin{center}
 \includegraphics[scale=.5]{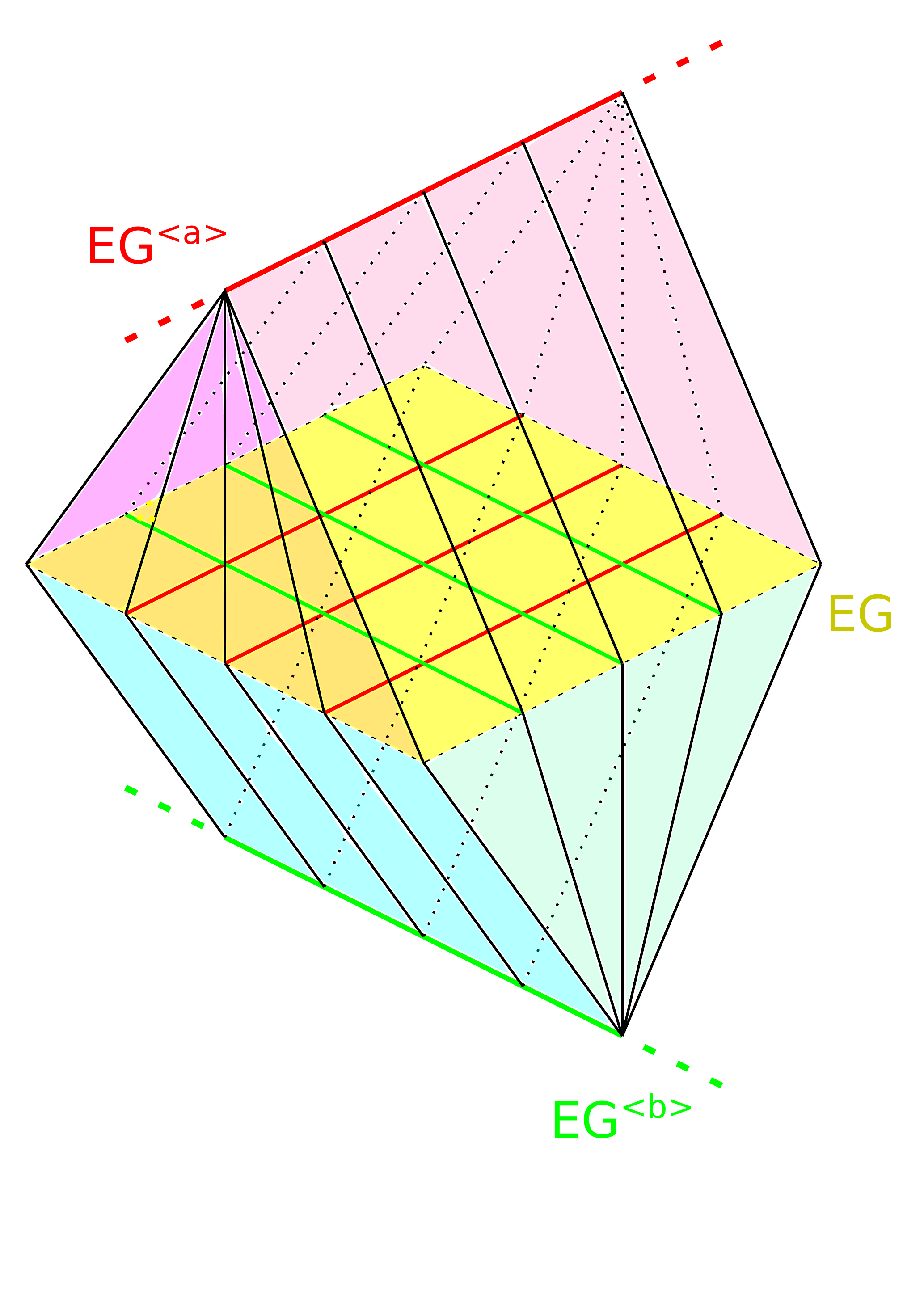}
\end{center}

\chapter[Coverings and field extensions]{Branched coverings of $S^3$ and extensions of number fields}
\section{A motivating analogy}
We recall a property of prime ideals in Dedekind rings (cf. e.g. \cite[\s~1.7]{lang_ANT}). It generalises and completes what we said in Section \ref{sec:ramification of ideals}.
Let $A$ be a Dedekind ring, $K$ its quotient field, $L/K$ a finite separable extension and $B$ the integral closure of $A$ in $L$. If $P$ is a prime ideal of $A$, then $PB$ is an ideal of $B$ and has a factorization (unique up to permutations)
\[
PB=Q_1^{e_1}\cdots Q_r^{e_r}
\]
into finitely many prime ideals of $B$. The $Q_i$s are precisely the prime ideals of $B$ that lie above $P$ meaning that $Q_i\cap A=P$ (notation: $Q_i|P$).

The natural number $e_i$ is called the \emph{ramification index} of $Q_i$ over $P$. Moreover, for each $i$, let $f_i$ be the (finite) degree of $B/Q_i$ as a field extension of $A/P$ (recall that nonzero prime ideals of Dedekind rings are maximal, hence $B/Q_i$ and $A/P$ are fields).

One can show that
\[
[L:K]=\sum_{i=1}^re_if_i.
\]
If $L/K$ is Galois, then all the $Q_i$s are conjugate to each other, hence all the ramification indices (resp. the residue class degrees) are equal to the same number $e$ (resp. $f$). Thus, in particular, the preceding formula becomes
\begin{equation}\label{eq:efr=degree}
[L:K]=efr.
\end{equation}

A similar formula holds for finite regular coverings of $S^3\setminus V$, where $V$ is a tubular neighbourhood of a knot $K\subseteq S^3$ (cf. \cite[Chapter 5]{morishita}).
In fact, let $G$ be the knot group of $K$ and $H=\gen{a,l}\leq G$ be the peripheral subgroup containing our favourite meridian $a$. Consider a normal subgroup $U\trianglelefteq G$ of finite index. Then $UH$ is a subgroup of $G$ and
\begin{equation}\label{eq:efr for knots}
\begin{split}
|G\colon U|&=|G\colon UH|\cdot|UH\colon U|=|G\colon UH|\cdot|H\colon H\cap U|=\\
&=|G\colon UH|\cdot|H\colon\gen{a}(H\cap U)|\cdot|\gen{a}(H\cap U)\colon H\cap U|\\
&=|G\colon UH|\cdot|H\colon\gen{a}(H\cap U)|\cdot|\gen{a}\colon \gen{a}\cap U|
\end{split}
\end{equation}
In topological terms, $U$ defines a finite regular covering $\pi_U\colon EG/U\to BG$ of the knot exterior $BG$. The $G$-action on $EG$ induces a transitive permutation of the connected components of $\partial EG$, which are planes, and the peripheral subgroup $H$ is the (setwise) stabiliser of one such component $P_0$. Hence the subgroup $UH$ is the (setwise) stabiliser of the connected component $T_0=\pi_U(P_0)$ (a torus) of $\partial EG/U$. It follows that the cosets in $G/UH$ parametrise the connected components of the fibre $\pi_U^{-1}(\partial BU)$. So $|G\colon UH|$ is analogous to $r$ in Equation \eqref{eq:efr=degree}. In the same spirit, $|\gen{a}\colon \gen{a}\cap U|$ is the least positive integer $k$ such that $a^k\in U$, hence it is analogous to $e$ in the same equation. Finally, $|H\colon\gen{a}(H\cap U)|$ is analogous to $f$.

Actually, the above reasoning also holds for $K$ a link with components $K_1,\dots,K_c$, provided the peripheral subgroups of each component are nondegenerate, i.e., isomorphic to $\Z^2$. Such a link will be called \emph{peripherally nontrivial}.
For $i=1,\dots,c$, let $V_i$ be a tubular neighbourhood of $K_i$, disjoint from the others, and let $H_i=\gen{a_i,l_i}$ be the peripheral subgroup of $G$ generated by a meridian $a_i$ and the corresponding longitude $l_i$ of the component $K_i$. $G$ permutes the connected components of $\partial EG/U$ in such a way that two of them are in the same orbit if and only if they are relative to the same component $K_i$ of $K$, that is, if and only if they project to $\partial V_i$.
The stabiliser of a connected component of $\partial EG/U$ relative to $K_i$ is $UH_i$. Thus, the connected components of the fibre $\pi^{-1}(\partial V_i)$ are indexed by $G/UH_i$ and it still holds that
\[
\xymatrix@C-8pt@R-5pt{
[G\colon U]&=&[G\colon UH_i]\ar@{~>}[d]&[H_i\colon\gen{a_i}(H_i\cap U)]\ar@{~>}[d]&[\gen{a_i}\colon \gen{a_i}\cap U]\ar@{~>}[d]\\
&& \vspace{8pt} r_i&f_i & e_i.
}
\]
In other words, the components $K_1,\dots,K_c$ are the analogues of the distinct primes in the ground field of an algebraic number field extension.

It is possible to push this comparison a little forward: in number theory the following holds.
\begin{prop}[\cite{neukirch}, Chapter VI, Corollary 3.8]\label{prop:cor3.8neukirch}
Let $L/F$ be a finite extension of algebraic number fields. If almost all primes of $F$ split completely in $L$, then $L=F$.
\end{prop}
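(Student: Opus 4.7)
The strategy is to reduce to the Galois case and then invoke Chebotarev's density theorem.

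First I would pass to the Galois closure $\tilde L/F$ of the extension $L/F$, which is again a finite extension of algebraic number fields. The key reduction step is the standard fact that a prime $P$ of $F$ which is unramified in $\tilde L$ splits completely in $L$ if and only if it splits completely in $\tilde L$. The non-trivial direction uses that $\tilde L$ is the compositum of the conjugates $\sigma L$ (for $\sigma$ ranging over $\mathrm{Hom}_F(\tilde L,\bar F)$), that the splitting behaviour is equivariant under this Galois action, and that complete splitting is preserved by the compositum of unramified extensions. Since only finitely many primes of $F$ ramify in $\tilde L$, the hypothesis that almost all primes of $F$ split completely in $L$ upgrades to: almost all primes of $F$ split completely in $\tilde L$ as well.

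Next I would apply Chebotarev's density theorem to the Galois extension $\tilde L/F$ with group $\Gamma=\mathrm{Gal}(\tilde L/F)$. The set of unramified primes of $F$ whose Frobenius class is trivial in $\Gamma$ has Dirichlet density exactly $1/|\Gamma|$; these are precisely the primes that split completely in $\tilde L$. On the other hand, a cofinite set of primes has density $1$, so the hypothesis forces
\[
\frac{1}{|\Gamma|}=1,
\]
whence $|\Gamma|=1$ and $\tilde L=F$. Since $F\subseteq L\subseteq \tilde L=F$, this gives $L=F$.

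The main obstacle is the reduction to the Galois closure, since one must verify the equivalence between splitting in $L$ and splitting in $\tilde L$ at the level of unramified primes (the corresponding statement is false for partial splitting). Once this is in hand, Chebotarev finishes the argument immediately; the qualitative use of Chebotarev here (density $1$ implying $|\Gamma|=1$) is actually weaker than its full strength, and could alternatively be replaced by the Hermite--Minkowski--type argument of Bauer's theorem, but invoking Chebotarev is the most economical route.
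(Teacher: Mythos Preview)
Your argument is correct and is the standard proof via Chebotarev's density theorem. Note, however, that the paper does not supply its own proof of this proposition: it is simply quoted from \cite[Chapter VI, Corollary 3.8]{neukirch} as background input for the knot-theoretic analogue that follows (Proposition \ref{prop:analogue of cor3.8}). Your write-up is exactly the argument one finds in Neukirch, so there is nothing to compare beyond observing that you have reproduced the cited reference's proof.
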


The next result can be seen as a partial analogue in knot theory.
\begin{prop}\label{prop:analogue of cor3.8}
Let $G$ be the group of a peripherally nontrivial link $K$ with components $K_1,...,K_c$ and relative tubular neighbourhoods $V_1,\dots,V_c$. Let $U\id G$ a normal subgroup of finite index and $\pi_U\colon EG/U\to BG$ the finite covering induced by $U$. Then $G=U$ if and only if $\forall i=1,\dots,c:|\pi_0(\pi^{-1}(\partial V_i))|=|G:U|$.
\end{prop}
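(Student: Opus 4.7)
The plan is to exploit the quantitative ``fundamental identity''
\[
[G:U]=r_i\cdot f_i\cdot e_i,\qquad i=1,\dots,c,
\]
already derived in the analogue of Equation \eqref{eq:efr for knots} for links, where $r_i=[G:UH_i]$, $f_i=[H_i\colon\gen{a_i}(H_i\cap U)]$ and $e_i=[\gen{a_i}\colon\gen{a_i}\cap U]$. Recall that in the topological dictionary $r_i=|\pi_0(\pi_U^{-1}(\partial V_i))|$.

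The forward implication is essentially a tautology. If $U=G$, then $\pi_U$ is the identity on $BG$, so $\pi_U^{-1}(\partial V_i)=\partial V_i$, which is a single torus; thus $|\pi_0(\pi_U^{-1}(\partial V_i))|=1=[G:U]$ for every $i$.

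For the converse, I would argue as follows. The assumption $r_i=[G:U]$ combined with the identity above forces $e_i=f_i=1$ for each $i$, i.e.\ $\gen{a_i}(H_i\cap U)=H_i$ and $\gen{a_i}\subseteq U$. The second condition says $a_i\in U$, hence $\gen{a_i}\subseteq H_i\cap U$, and together with the first condition this upgrades to $H_i\cap U=H_i$, that is $H_i=\gen{a_i,l_i}\subseteq U$. In particular, every chosen meridian $a_i$ of every component $K_i$ lies in~$U$.

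The final step, which I expect to be the delicate one (since it uses a specific feature of Wirtinger presentations for links rather than a purely group-theoretic manipulation), is to deduce $U=G$. Pick a Wirtinger presentation of $G$ coming from a diagram of $K$: every generator is a meridian of some component $K_i$, and as recalled in Chapter~\ref{chp:knotTheory} any two meridians of the same component are conjugate in $G$. Since each $a_i\in U$ and $U\id G$, every conjugate of $a_i$ lies in $U$, so every Wirtinger generator lies in $U$. As these generate $G$, we conclude $U=G$.
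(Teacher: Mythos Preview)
Your proof is correct and follows essentially the same route as the paper. The only cosmetic difference is that the paper uses the coarser factorisation $[G:U]=[G:UH_i]\cdot[UH_i:U]$ to conclude $[UH_i:U]=1$, hence $H_i\subseteq U$, in one step, whereas you pass through the finer splitting $[UH_i:U]=f_i\cdot e_i$ and reassemble; the closing argument via conjugates of meridians and normality of $U$ is identical.
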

\begin{proof}
For $i=1,\dots,c$, let $H_i=\gen{a_i,l_i}$ be the peripheral subgroup of $G$ generated by the meridian $a_i$ and the longitude $l_i$ of the component $K_i$. It has already been said that $|\pi_0(\pi^{-1}(\partial V_i))|=|G:UH_i|$. On the other hand, $|G:U|=|G:UH_i|\cdot|UH_i:U|$. Hence the hypothesis 
\begin{equation}\label{eq:blaaaaah}
 [G:U]=|\pi_0(\pi^{-1}(\partial V_i))|
\end{equation}
forces $|H_i:U\cap H_i|=|UH_i:U|=1$, that is, $H_i\subseteq U$. In particular, $a_i\in U$, along with all its conjugates, by normality of $U$. But the generators of $G$ are all conjugates to some $a_i$, whence $G\subseteq U$, provided \eqref{eq:blaaaaah} holds for all $i=1,\dots,c$. The reverse implication is obvious.
\end{proof}
\begin{rem}
Proposition \ref{prop:analogue of cor3.8} is somehow weaker than Proposition \ref{prop:cor3.8neukirch}. In fact, the former deals with links in $S^3$ only. According to \cite{morishita}, $S^3$ is the manifold analogue of the field $\Q$. Hence, translating Proposition \ref{prop:analogue of cor3.8} to number theory amounts at fixing $F=\Q$.

On the other hand, the definition of link admits an obvious generalisation as embedding of disjoint copies of $S^1$ into an arbitrary $3$-manifold.
It would be interesting to understand to which $3$-manifolds Proposition \ref{prop:analogue of cor3.8} extends.
\end{rem}

\section[Homotopy of classifying spaces]{The homotopy of the quotients of the classifying spaces}
From now until the end of the chapter, knots will tacitly assumed to be prime.

Let $G$ be a prime knot group and $U\trianglelefteq G$ a normal subgroup of finite index. Our aim is to describe the space $E_\G G/U$.
The quotient space $EG/U$ has finitely many boundary path-connected components $T_1,\dots,T_r$, each homeomorphic to a torus. It remains to understand how the extra components $Cyl(\pi\colon\R^2\to\R)$ are transformed by quotienting out the $U$-action.
Let $H=\gen{a,l}\cong\Z^2$ be the peripheral subgroup of the chosen meridian. Then $U\cap H$ is a finite-index sublattice of $H$, whence it contains a nontrivial power of $a$; in fact, one can always choose a $\Z$-basis whose first vector is a power of $a$, as follows. Let $\{v,w\}$ be an arbitrary $\Z$-basis of $U\cap H$ and let $e=\min\{n\in\N\setminus\{0\}\mid a^n\in\spn_\Z\{v,w\}\}$. Then, expressing $a^e$ as a $\Z$-linear combination $\alpha v+\beta w$, necesarily $gcd(\alpha, \beta)=1$. As a consequence of Euclid's Algorithm (cf. \cite[Book 7, Proposition 1]{euclid}), there are $\gamma,\delta\in\Z$ such that 
\[
\mathrm{det}\begin{pmatrix} \alpha & \gamma \\ \beta & \delta \end{pmatrix}=\alpha\delta-\beta\gamma=1.
\]
The condition on the determinant says that $a^e$ and $u=v^\gamma w^\delta$ form a $\Z$-basis for $U\cap H$, that is, $U\cap H=\gen{a^e}\times\gen{u}$.
Now take the extra component $Cyl(\pi\colon EH\to EH^{\gen a})$ corresponding to $H$ (recall that the extra components, as well as the boundary components of $EG$, are indexed by the cosets of $G/H$). Quotienting first by $\gen{a^e}$ gives an infinite solid cylinder with axis $EH^{\gen a}$ and meridian $a^e$. Quotienting again by $\gen u$ provides a solid torus. The fact that in general $u$ is not orthogonal to $a$ is reflected in the configuration of the lines on the surface of this torus. In particular, if $u=a^pl^q$, the ``old'' longitude $l$ is wrapped around the axis by a constant slope of $-p/e$ (and one needs $q$ old longitudes to run around the whole torus).
This accounts for a solid torus attached to the boundary component of $EG/U$ originating from $H$ (a word of warning: the quotient map in general identifies several boundary components of $EG$ to a singular boundary component of $EG/U$). Clearly the same reasoning works for the other boundary components of $EG/U$.
Summing up, the space $E_\G G/U$ is obtained by gluing a closed solid torus $V_k\simeq Cyl(\pi\colon\R^2\to\R)/U$ to each boundary component of $EG/U$, in such a way that $T_k=\partial V_k$ $(k=1,\dots,r)$.

\subsection{The proof of Proposition A}
Thanks to the preceding discussion, the fundamental group of $E_\G G/U$ can be computed via iterated applications of Seifert and Van Kampen's Theorem. Suppose a presentation of $U$ is given, with set of generators $gen(U)$ and set of relators $rel(U)$.
Let $\pi_1(T_k)=\gen{m_k,l_k\mid m_kl_km_k^{-1}l_k^{-1}}\simeq \Z^2$ and $\pi_1(V_k)=\gen{\ell_k\mid\; }\simeq \Z$. Finally, let $i_k\colon\pi_1(T_k)\to\pi_1(BU)$ and $j_k\colon\pi_1(T_k)\to\pi_1(V_k)$ be the group homomorphisms induced by the inclusion maps. Then,
\[
\pi_1(E_\G G/U)= (\dots(\pi_1(BU)\ast_{\pi_1(T_1)}\pi_1(V_1))\ast\dots) \ast_{\pi_1(T_r)}\pi_1(V_r)
\]
(where one has obviously to consider small open neighbourhoods of $BU$ and each $V_k$ that deformation-retract onto $BU$ and $V_k$ respectively, as well as their intersection deformation-retracts onto $T_k$. This is always possible thanks to tameness).
Hence, $\pi_1(E_\G G/U)$ admits the presentation
\[
\begin{aligned}
&\gen{gen(U),\ell_1,\dots,\ell_r\mid rel(U),\; i_k(m_k)j_k^{-1}(m_k),\; i_k(l_k)j_k^{-1}(l_k),\; k=1,\dots,r}\\
=&\gen{gen(U),\ell_1,\dots,\ell_r\mid rel(U),\; i_k(m_k), \;\ell_kj_k^{-1}(l_k),\; k=1,\dots,r}\\
=&\gen{gen(U)\mid rel(U),\; i_k(m_k),\; k=1,\dots,r}
\end{aligned}
\]
(the last equality follows from Tietze's Theorem on equivalence of presentations).
Therefore, there is a short exact sequence of groups
\begin{equation}\label{eq:ses of groups}
\xymatrix{
1\ar[r]&\gen{\gen{i_1(m_1),\dots,i_r(m_r)}}\ar[r]&U\ar[r]&\pi_1(E_\G G/U)\ar[r]&1
}
\end{equation}
The elements $i_k(m_k)$ are representatives of the $U$-conjugacy classes of the powers of generators of $G$ that lie in $U$. We will denote 
\[
 M_U=\gen{\gen{i_1(m_1),\dots,i_r(m_r)}}.
\]
\subsection{Branched coverings of knot spaces}
Given a prime knot group $G$, every normal subgroup of finite index $U\id G$ induces a finite regular unbranched cover $EG/U$ and an associated branched cover $E_\G G/U$. By the construction of $E_\G G$ (see \eqref{eq:prime diagram explicit} and the beginning of this section),
\begin{equation}\label{eq:branched included unbranched}
EG/U\subseteq E_\G G/U.
\end{equation}
Proposition A makes the link between unbranched and associated branched covers precise at the level of fundamental groups. Here we present a couple of related results.
\begin{lem}\label{lem:proper disc action}
The group $U/M_U$ acts properly discontinuously on $E_\G G/M_U$ with quotient space $E_\G G/U$.
\end{lem}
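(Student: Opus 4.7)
The plan is to exhibit the action, reduce the identification of the quotient to a formality, and then obtain proper discontinuity by proving freeness and appealing to the $G$-CW-structure of $E_\G G$.

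The $U$-action on $E_\G G$ (restricted from the $G$-action) descends via $M_U\trianglelefteq U$ to an action of $U/M_U$ on $E_\G G/M_U$. Iterating the quotient identifies $(E_\G G/M_U)/(U/M_U)$ with $E_\G G/U$, so the entire substance of the lemma is proper discontinuity.

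I would first check that the $U/M_U$-action is \emph{free}. Suppose $uM_U$ fixes the class of some $x\in E_\G G$: then $ux=\mu x$ for some $\mu\in M_U$, whence $\mu^{-1}u$ lies in the $G$-isotropy $G_x$. Since $E_\G G$ is a classifying space for the family $\G$ generated by the meridians, $G_x\leq\gen{g^{-1}ag}$ for some conjugate meridian, so $\mu^{-1}u$ is a power of a meridian that lies in $U$. By the very definition of $M_U$, every such element is one of its generators; hence $\mu^{-1}u\in M_U$, and therefore $u\in M_U$, i.e.\ $uM_U$ is trivial.

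With freeness secured, proper discontinuity is automatic: the pushout \eqref{eq:prime diagram explicit} exhibits $E_\G G$ as a $G$-CW-complex, so $M_U$ acts cellularly and $E_\G G/M_U$ inherits a CW-structure on which $U/M_U$ acts cellularly and freely, and a free cellular action of a discrete group on a CW-complex is automatically a covering-space action. I expect the only non-formal step to be the freeness computation: it requires matching the cyclic isotropy subgroups prescribed by the family $\G$ with the generators of $M_U$, and this matching rests on the fact that $M_U$ is generated (as a normal subgroup of $U$) by \emph{all} powers of meridians belonging to $U$, not merely a chosen set of representatives.
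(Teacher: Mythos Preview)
Your argument is correct and takes a cleaner, more structural route than the paper's. The paper verifies proper discontinuity by hand: it fixes a point $\overline{x}=M_Ux$, splits into the cases $x\in EG$ (where the already properly discontinuous $G$-action on $EG$ supplies disjoint neighbourhoods) and $\stab_G(x)\neq 1$ (where the only translates of a small neighbourhood that fail to be disjoint come from powers of the stabilising meridian, all of which lie in $M_U$), and argues that in either case the neighbourhood condition is met. You instead isolate \emph{freeness} of the induced $U/M_U$-action as the whole content, and then invoke the standard fact that a free $(U/M_U)$-CW-complex projects to its orbit space as a covering. Both proofs rest on the same group-theoretic kernel---the containment $U\cap\gen{g^{-1}ag}\subseteq M_U$ for every $g\in G$, which you correctly flag as the one non-formal step---but your packaging separates that input from the topology and avoids the case analysis. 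The paper's version is more self-contained; yours makes the mechanism more transparent and would port more easily to related settings.
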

\begin{proof}
The action is induced by the $G$-action on $E_\G G$, as follows:
\[
uM_U\bullet M_Ux=M_Uux
\]
for $u\in U, x\in E_\G G$.
The claim on the quotient space is obvious, so it remains to show proper discontinuousness.

Let $\overline x\in E_{\G}G/M_U$, that is, $\overline x=M_Ux$ for a $x\in E_{\G}G$. Let us set $\mathrm{Fix}_\G G=\sqcup_{C\in\G}(E_\G G)^C$ to simplify notation. If $\stab_G(x)=1$, i.e., $x\!\in\! E_{\G}G\setminus(\mathrm{Fix}_\G G)$, then by the construction of $E_\G G$ we can enlarge $EG\subseteq E_\G G$ to a $G$-CW-complex $Y\subseteq E_\G G$ which is $G$-homeomorphic to $EG$ and contains $x$ in its interior (this amounts to cutting away a small open neighbourhood of $\mathrm{Fix}_\G G$). Hence it is sufficient to consider only the two cases, $x\in EG$ and $\stab_G(x)\neq 1$.

Suppose that the $U/M_U$-action on $E_{\G}G/M_U$ does not satisfy proper discontinuousness, that is, there exist a $\overline x=M_Ux\in E_{\G}G/M_U$ (some $x\in E_\G G$) such that, denoting with $B$ a generic neighbourhood of $\overline x$,
\begin{equation}\label{eq:not prop disc}
\forall B : \exists uM_U\in U/M_U\setminus\{M_U\}: B\cap uM_UB\neq\emptyset.
\end{equation}
Letting $\tau\colon E_\G G\to E_\G G/M_U$ be the canonical quotient map, one has $\tau^{-1}(B)$ $=\bigcup_{m\in M_U}mD$ and $\tau^{-1}(uM_UB)=\bigcup_{m\in M_U}muD$, for a suitable neighbourhood $D$ of $x$.

In the first case ($x\in EG$), the neighbourhood $B$ can be chosen so small that $D$ be disjoint from all its translates $uD, u\in U\setminus\{1\}$ because the $G$-action on $EG$ is properly discontinuous by definition. This contradicts \eqref{eq:not prop disc}.
As regards the second case ($\stab_G(x)\neq 1$), by construction the stabilizer is infinite cyclic and generated by a meridian, say $\stab_G(x)=\gen{a}$. The obstruction in order to apply the same reasoning as before is that $D$ cannot be chosen to be disjoint from some of its translates. But, as long as $B$ is small enough, the only troubled translates of $D$ are the translates by powers of $a$, which are identified with $1$ when passing to the quotient by $M_U$.
\end{proof}

\begin{prop}[the Easter Beer Proposition]\label{prop:easter beer}
The canonical quotient map $\widehat\pi\colon E_{\G}G/M_U\to E_{\G}G/U$ is the universal covering map of $E_\G G/U$.
\end{prop}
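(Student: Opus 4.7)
The statement combines two assertions: that $\widehat\pi$ is a covering map, and that the total space $E_\G G/M_U$ is simply connected.

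The first is essentially immediate from Lemma~\ref{lem:proper disc action}. Its proof in fact establishes something stronger than proper discontinuity in the loose sense: around each $\bar x\in E_\G G/M_U$ it exhibits a neighbourhood $B$ with $uM_U\cdot B\cap B=\emptyset$ for every $uM_U\neq M_U$. The $U/M_U$-action is therefore free and properly discontinuous, and the orbit map of such an action on a Hausdorff CW-complex is a regular covering map with deck transformation group canonically $U/M_U$.

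For the second assertion I will compute the monodromy $\mu\colon\pi_1(E_\G G/U)\to U/M_U$ of this covering and show it is an isomorphism. The defining short exact sequence of a regular covering reads
\[
1\to\widehat\pi_\ast\pi_1(E_\G G/M_U)\to\pi_1(E_\G G/U)\xrightarrow{\mu}U/M_U\to 1;
\]
since $\widehat\pi$ is a covering, $\widehat\pi_\ast$ is injective, so once $\mu$ is known to be an isomorphism one concludes $\pi_1(E_\G G/M_U)=\ker\mu=0$. To identify $\mu$ I would exploit the inclusion $\iota\colon EG/U\hookrightarrow E_\G G/U$, whose induced map $\iota_\ast\colon U=\pi_1(EG/U)\to\pi_1(E_\G G/U)$ is, by the proof of Proposition~A, precisely the natural quotient $U\twoheadrightarrow U/M_U$. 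The restriction $\widehat\pi|_{EG/M_U}\colon EG/M_U\to EG/U$ is the classical regular $U/M_U$-cover, whose monodromy is itself the natural quotient $U\to U/M_U$. Naturality of monodromy along $\iota$ then yields $\mu\circ\iota_\ast=(\text{natural quotient})$; since $\iota_\ast$ is itself that quotient and is surjective, $\mu$ must be the identity under Proposition~A's identification $\pi_1(E_\G G/U)\cong U/M_U$.

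The delicate point will be the naturality step pinning down $\mu\circ\iota_\ast$; once it is in place, the rest is routine covering theory, and the remaining ingredients (the freeness extracted from Lemma~\ref{lem:proper disc action}, Proposition~A, and surjectivity of $\iota_\ast$) are all in hand.
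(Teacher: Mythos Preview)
Your argument is correct, and it differs genuinely from the paper's. Both proofs begin from Lemma~\ref{lem:proper disc action} (to see that $\widehat\pi$ is a regular covering with deck group $U/M_U$) and from Proposition~A (to identify $\pi_1(E_\G G/U)\cong U/M_U$). The divergence is in the final step. The paper only obtains an \emph{abstract} isomorphism
\[
U/M_U\;\cong\;\mathrm{deck}(\widehat\pi)\;\cong\;\pi_1(E_\G G/U)\big/\widehat\pi_\ast\pi_1(E_\G G/M_U)\;\cong\;(U/M_U)\big/\widehat\pi_\ast\pi_1(E_\G G/M_U),
\]
and then appeals to the Hopfian property of $U/M_U$ --- established via residual finiteness of compact $3$-manifold groups together with finite generation --- to force the kernel to vanish. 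You instead track the \emph{specific} homomorphism: by restricting $\widehat\pi$ over the subspace $EG/U$ and using that the inclusion of covering squares makes monodromy natural, you identify $\mu$ with the very isomorphism produced by Proposition~A. This is more elementary (no residual finiteness of $3$-manifold groups, no Hopf property) and more informative, since it pins down the map rather than just its bijectivity. The paper's route, on the other hand, is shorter to write and avoids the bookkeeping of basepoints and equivariance implicit in your naturality step.
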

\begin{proof}
The group $U$ is finitely generated, being a finite-index subgroup of the knot group $G$. Hence $U/M_U$ is finitely generated too. Moreover, $U/M_U$ is residually finite, being the fundamental group of the compact $3$-manifold $E_\G G/U$ (cf. \cite{3mfdgp}). Thus, $U/M_U$ is Hopfian (cf. e.g. \cite[Section~4.1]{neumann}), i.e., it is not isomorphic to any of its proper subgroups.

But $U/M_U$ is isomorphic to the group $deck(\widehat\pi)$ of deck transformations of the covering $\widehat\pi\colon E_{\G}G/M_U\to E_{\G}G/U$ (Lemma \ref{lem:proper disc action} and \cite[Proposition 1.40]{hatcher}). On the other hand, $deck(\widehat\pi)\cong\pi_1(E_\G G/U)/\widehat\pi_\ast\left(\pi_1(E_\G G/M_U)\right)$, where $\widehat\pi_\ast$ is the homomorphism induced by $\widehat\pi$ on the fundamental groups (cf. \cite[Proposition 1.39]{hatcher}).

Together with Proposition A, this means
\[
{U}/{M_U}\cong \frac{{U}/{M_U}}{\widehat\pi_\ast\left(\pi_1(E_\G G/M_U)\right)}.
\]
Then, by Hopf property, $\widehat\pi_\ast\left(\pi_1(E_\G G/M_U)\right)=1$, whence the simple connectedness of $E_\G G/M_U$.
\end{proof}

\subsection{The proof of Theorem B}
Now we have all the ingredients needed to prove Theorem B, that we report here for the reader's convenience.
\begin{thmB}
Let $G$ be a prime knot group and let $U\id G$ be a normal subgroup of finite index. Then the following are equivalent.
\begin{enumerate}
\item $U=M_U$;
\item The canonical projection $\widehat\pi\colon E_\G G/M_U\to E_\G G/U$ is a trivial covering;
\item The canonical projection $\pi\colon E G/M_U\to E G/U$ is a trivial covering;
\item $\pi_1(E_\G G/U)=1$;
\item $E_\G G/U\cong S^3$;
\end{enumerate}
\end{thmB}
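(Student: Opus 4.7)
The plan is to route all five conditions through condition (1), exploiting Proposition A and the Easter Beer Proposition \ref{prop:easter beer} as the two engines that do essentially all the work.

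\emph{Step 1: (1) $\Leftrightarrow$ (4).} This is immediate from the short exact sequence of Proposition A,
\[
1 \to M_U \to U \to \pi_1(E_\G G/U) \to 1,
\]
which identifies $\pi_1(E_\G G/U)$ with $U/M_U$. Hence $U = M_U$ precisely when this fundamental group vanishes.

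\emph{Step 2: (1) $\Leftrightarrow$ (2).} By the Easter Beer Proposition, $\widehat\pi\colon E_\G G/M_U \to E_\G G/U$ is the universal cover; in particular the total space is connected. A connected covering is trivial if and only if it is a homeomorphism, if and only if its group of deck transformations is trivial. Since that deck group is $U/M_U$, triviality of $\widehat\pi$ is equivalent to (1).

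\emph{Step 3: (1) $\Leftrightarrow$ (3).} Since $G$ acts freely on the connected contractible space $EG$ and $M_U \trianglelefteq U$, the projection $\pi\colon EG/M_U \to EG/U$ is a regular covering with deck group $U/M_U$; both total and base are connected. As in Step 2, $\pi$ is trivial if and only if $U = M_U$. (Equivalently: in the $U = M_U$ direction the map is the identity; in the converse direction degree counting forces $|U:M_U| = 1$.)

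\emph{Step 4: (4) $\Leftrightarrow$ (5).} The space $EG/U$ is a finite regular cover of the knot exterior $BG = S^3\setminus V$, hence a compact $3$-manifold whose boundary is a disjoint union of finitely many tori $T_1,\dots,T_r$. By the description of $E_\G G/U$ given just before Proposition A, this space is obtained from $EG/U$ by gluing a closed solid torus along each $T_k$; therefore $E_\G G/U$ is a closed, connected $3$-manifold. The implication (5) $\Rightarrow$ (4) is immediate since $\pi_1(S^3) = 1$, while (4) $\Rightarrow$ (5) follows from the Poincaré Conjecture (Perelman): a closed simply connected $3$-manifold is homeomorphic to $S^3$.

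Stringing Steps 1--4 together yields the five-way equivalence. The genuine obstacle is Step 4: without the Poincaré Conjecture we would only recover simple connectedness, not the homeomorphism type, and no lighter substitute seems available since we have very limited a priori geometric information about $E_\G G/U$. The remaining implications are essentially formal consequences of Proposition A and the Easter Beer Proposition.
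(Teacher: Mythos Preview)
Your proof is correct and uses the same ingredients as the paper (Proposition A, the Easter Beer Proposition, and the Poincar\'e Conjecture). The only organisational difference is that the paper closes a cycle $(1)\Rightarrow(2)\Rightarrow(3)\Rightarrow(1)$---with $(2)\Rightarrow(3)$ by restricting the branched cover to the unbranched one via the inclusion $EG/U\subseteq E_\G G/U$, and $(3)\Rightarrow(1)$ via the Galois correspondence for coverings of $BG$---whereas you prove $(1)\Leftrightarrow(2)$ and $(1)\Leftrightarrow(3)$ independently by reading off the deck group $U/M_U$ in each case.
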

\begin{proof}
Clearly, (1)$\Rightarrow$(2). The implication (2)$\Rightarrow$(3) holds by restriction, see \eqref{eq:branched included unbranched}. Moreover, (3)$\Rightarrow$(1) by the Galois correspondence between coverings of $BG$ and subgroups of $G$ (cf. \cite[Theorem 1.38]{hatcher}. The implication (1)$\Leftrightarrow$(4) is given by the exact sequence \eqref{eq:ses of groups} (but it also follows from Proposition \ref{prop:easter beer}). Finally, (4)$\Leftrightarrow$(5) is Poincar\'e conjecture (now Hamilton-Perelman's Theorem, as Perelman himself would probably like to call it).
\begin{ex}[finite cyclic coverings and Fox completions]
Let $K\subseteq S^3$ be a knot, with tubular neighbourhood $V$, $X=S^3\setminus V$ and $G=G_K$ the knot group. Recall from Chapter \ref{chp:knotTheory} that $G/[G,G]$ is infinite cyclic generated by the class of a meridian $a$ and the winding number isomorphism $W\colon G/[G,G]\to\Z$ maps that class to $1$. The kernel $U_n$ of the composition
\[
\xymatrix@C+12pt{
G\ar^-{\mathrm{lk(\slot,K)}}[r] & \Z\ar^-{\mathrm{mod}\, n}[r] & \Z/n\Z
}
\]
is a normal subgroup of finite index in $G$.
Since 
\[
\frac{G/[G,G]}{U_n/[G,G]}\cong \frac{G}{U_n}\cong \Z/n\Z
\]
we have a description of $U_n$ as the extension
\[
1\to[G,G]\to U_n\to\gen{a^n}\to 1.
\]
with $a$ a meridian and $\gen{a^n}\cong\Z$.

The covering $\rho_n\colon X_n\to X$ associated to $U_n$, i.e., the unique covering of $X$ with group of deck transformations isomorphic to $\Z/n\Z$, is called the \emph{$n$-fold cyclic covering} of $X$ (or of $K$).
By construction, the covering space $X_n$ has only one boundary component and, letting $H=\gen{a,l}$ be the peripheral subgroup associated to the meridian $a\in G$, $U_n\cap H=\gen{a^n}\times\gen l$. Referring to the notation used at the beginning of this chapter, this means $e=n=|G:U_n|$, $f=r=1$.

Gluing one solid torus to $X_n$ along its meridian $a^n$, another one to $X$ along $a$, and extending in the obvious way the projection map $\rho_n$ to the manifolds so obtained, one gets the \emph{$n$-fold cyclic branched covering} of $K$
\[
\widehat{\rho_n}\colon \widehat{X_n}\to S^3.
\]
Some authors, for example Morishita (cf. \cite[Example 2.14]{morishita}), call this construction the \emph{Fox completion} of $\rho_n$.

Therefore, the domain of the Fox completion of the $n$-fold cyclic covering of $K$ is a model for the space $E_\G G/U_n$. Its fundamental group is 
\[
\pi_1(E_\G G/U_n)\cong U_n/\gen{a^n}.
\]

To be more concrete, let 
\[
G=\gen{a,b,c\mid ab=bc=ca}=\gen{a,b,c\mid aba=bab}
\]
be the trefoil knot group. Following \cite[Example 2.1]{silwil}, we can describe the commutator subgroup as
\[
[G,G]=\gen{a^j(a^{-1}b)a^{-j}\mid j\in\Z}
\]
and using Tietze moves, we can reduce the presentation to
\[
[G,G]=\gen{a^{-1}b,a^{-2}ba}.
\]
This is confirmed by a theorem of Stallings (cf. \cite[Section 5.A]{burzie}), saying that the commutator subgroup of a fibred knot group is free on $2g$ generators, where $g$ is the genus of the knot.

An interrogation of MAGMA Online Calculator
\begin{verbatim}http://magma.maths.usyd.edu.au/calc/\end{verbatim}gives
\vskip3em
\small
\begin{verbatim}
% QUESTION
G<a,b>:=Group<a,b|a*b*a=b*a*b>;
H:=sub<G|a^-1*b,a^-2*b*a, a^3>; % sgp generated by [G,G] and a^3
U_3:=H^G;                       % normal closure
U_3;

% ANSWER
					Finitely presented group U_3 on 3 generators
					Index in group G is 3
					Generators as words in group G
					U_3.1 = b * a^-1
					U_3.2 = b^-1 * a
					U_3.3 = a^3

% QUESTION
U3:=Rewrite(G,U_3); % presentation of U_3, just for personal culture
U3;

% ANSWER
					Finitely presented group U3 on 3 generators
					Generators as words in group G
					  U3.1 = b * a^-1
					  U3.2 = a * b * a
					  U3.3 = a^-1 * b
					Relations
					  U3.1^-1 * U3.2 * U3.1^-1 * U3.2^-1 = Id(U3)
					  U3.2 * U3.3^-1 * U3.2^-1 * U3.3^-1 = Id(U3)

% QUESTION
K:=sub<G|a^3>;
M_3:=K^G;
MU3:=Rewrite(G,M_3);
MU3;

% ANSWER
					Finitely presented group MU3 on 4 generators
					Generators as words in group G
					  MU3.1 = a^3
					  MU3.2 = b^3
					  MU3.3 = (a * b * a^-1)^3
					  MU3.4 = (b * a * b^-1)^3
					Relations
					  MU3.1^-1 * MU3.3^-1 * MU3.2^-1 * MU3.1 * MU3.4 * MU3.2 * 
					    MU3.3 * MU3.4^-1 = Id(MU3)
					  MU3.4 * MU3.2 * MU3.3 * MU3.1 * MU3.2^-1 * MU3.4^-1 * 
					    MU3.1^-1 * MU3.3^-1 = Id(MU3)
					  MU3.1 * MU3.4 * MU3.1^-1 * MU3.3^-1 * MU3.2^-1 * MU3.4^-1 * 
					    MU3.2 * MU3.3 = Id(MU3)

% QUESTION
Index(U3,MU3);

% ANSWER
					8

% QUESTION
Q<x,y,z>:=U3/MU3;
Q;

% ANSWER
					Finitely presented group Q on 3 generators
					Relations
					  x^-1 * y * x^-1 * y^-1 = Id(Q)
					  y * z^-1 * y^-1 * z^-1 = Id(Q)
					  x^-1 * y * z^-1 = Id(Q)
					  x * y * z = Id(Q)
					  y * z * x = Id(Q)
					  y * z^-1 * x^-1 = Id(Q)
\end{verbatim}
\normalsize
The map $x\mapsto i,\;y\mapsto j,\;z\mapsto \overline{k}$ provides an isomorphism between $Q$ and the group 
\[
Q_8=\gen{i,j,k\mid i^2=j^2=k^2=ijk}
\]
of the units of the quaternions. 
Summing up, we have obtained a regular branched covering of $S^3$, branched over the trefoil knot, with fundamental group $\pi_1(E_\G G/U_3)\cong Q_8$.
\end{ex}
\end{proof}

\begin{rem}\label{rem:structure of M_U}
It is quite difficult to give an explicit description of $M_U$. However, there are some restrictions on its structure. Since knot groups are coherent (as a consequence of Scott's Theorem, cf. \cite{scott}), $M_U$ is either infinitely generated or finitely presented. In the former case, the index $|U:M_U|$ clearly has to be infinite. In the latter case, one can apply the results in to obtain the following Tits-like alternative: either the index $|U:M_U|$ is finite or $M_U$ is free. In fact,  the groups of nontrivial knots have cohomological dimension $2$. Since $U$ has finite index by hypothesis, it has cohomological dimension $2$ as well (cf. \cite[Section VIII.2]{brown}). If $|U:M_U|$ is finite, then also $M_U$ has cohomological dimension $2$, hence it is not free. On the other side, if $|U:M_U|=\infty$, then by \cite{bieri1978} $M_U$ has cohomological dimension $1$, hence it is free.
\end{rem}

\chapter[Homology and Poitou-Tate sequence]{(Co)homology of the classifying spaces and Poitou-Tate exact sequence}
Throughout the chapter, let $K\subseteq S^3$ be a fixed knot, with knot group $G$ and a tubular neighbourhood $V$, and set $X=S^3\setminus V$, $T=\partial V$. We denote $\widetilde X$ the universal cover of $X$. Let $a\in G$ be our favourite meridian, $l$ the associated longitude and $H=\gen{a,l}$ the corresponding peripheral subgroup.
\section{Poincar\'e-Lefschetz duality}
The aim of this section is to prove that there is a short exact sequence
\begin{equation}\label{eq:geometric sec}
\xymatrix{
0\ar[r] & H^2(G,\Z[G])\ar^-{\beta}[r] & \ind_H^G(\Z)\ar^-{\alpha}[r] & \Z\ar[r] & 0
}
\end{equation}
In fact, the CW-pair $(\widetilde{X},\partial\widetilde{X})$ gives rise to a long exact sequence (cf. \cite[\s~2.1]{hatcher})
$$
\xymatrix@C-2pt{
\dots\ar[r] & H_2(\widetilde{X},\partial\widetilde{X})\ar[r]^-{\xi_2} & H_1(\partial\widetilde{X})\ar[r]^-{\iota_1} & H_1(\widetilde{X})\ar[r]^-{\pi_1} & 
 H_1(\widetilde{X},\partial\widetilde{X})\ar[r] & \\
\ar[r]^{\xi_1} & H_0(\partial\widetilde{X})\ar[r]^-{\iota_0} & H_0(\widetilde{X})\ar[r]^-{\pi_0} & H_0(\widetilde{X},\partial\widetilde{X})\ar[r] & 0
}
$$

By simply-connectedness of universal covers $H_0(\widetilde{X})\cong\Z$ and $H_1(\widetilde{X})=0$.

Moreover, the terms $H_\bullet(\widetilde{X},\partial\widetilde{X})$ are the homology of the relative chain complex
$$
\xymatrix{
\dots\ar[r] & \dfrac{C_2(\widetilde{X})}{C_2(\partial\widetilde{X})}\ar[r]^-{\chi_2} & \dfrac{C_1(\widetilde{X})}{C_1(\partial\widetilde{X})}\ar[r]^-{\chi_1} & \dfrac{C_0(\widetilde{X})}{C_0(\partial\widetilde{X})} \ar[r]^-{\chi_0}& 0
}
$$
If we consider a generator $y$ of $C_0(\widetilde X)$, there is a path $s$ in $\widetilde{X}$ such that $s(0)=y$ and $s(1)$ is the generator of $C_0(\partial \widetilde X)$. Then $\chi_1([s])=[y]$. This shows that $\chi_1$ is surjective or, in other words, $H_0(\widetilde{X},\partial\widetilde{X})=0$.

Summing up, we obtain the short exact sequence
$$
\xymatrix{
0\ar[r] & H_1(\widetilde{X},\partial\widetilde{X})\ar[r]^-{\xi_1} & H_0(\partial\widetilde{X})\ar[r]^-{\iota_0} & \Z\ar[r] & 0.
}
$$

The connected components of $\partial\widetilde X$ are planes in $1$-to-$1$ correspondence with the set $G:H$. These planes are permuted by the action of $G$ in such a way that $H$ is the isotropy group of the index corresponding to one of them, hence $H_0(\partial\widetilde X)\cong\ind^G_H(\Z)$ (cf. \cite[\s~III.5]{brown}).

Lefschetz-Poincar\'e duality (cf. \cite[\s~3.3]{hatcher}) yields $H_1(\widetilde{X},\partial\widetilde{X})\cong H^2_c(\widetilde{X})$ (cohomology with compact support). But one has also $H^2_c(\widetilde{X})\cong H^2(X,\Z[G])$ (cf. \cite[\s~1]{eckmann76}). Finally, by asphericity of knot complements (recall Papakyriakopoulos's Sphere Theorem, \cite{papa}),
\[H^2(X,\Z[G])\cong H^2(G,\Z[G])\]
and the short exact sequence (\ref{eq:geometric sec}) is established, with $\alpha=\iota_0$ and $\beta$ given by the composition of $\xi_1$ with the isomorphism $H^2(G,\Z[G])\to H_1(\widetilde{X},\partial\widetilde{X})$.

\section{The proof of Theorem C}
Our aim is to find a suitable algebraic counterpart to the topologically flavoured sequence (\ref{eq:geometric sec}).
We obtain a projective (in fact, free) left $H$-module resolution of the trivial $H$-module $\Z$ from a cellular chain complex of the universal cover $\R^2$ of $T$:
\begin{equation}\label{eq:H resol Z 2}
 \xymatrix{
0\ar[r] & \Z[H]\langle\ul r\rangle\ar[r]^-{\partial_2} & \Z[H]\langle\ul m\rangle\oplus\Z[H]\langle\ul l\rangle\ar[r]^-{\partial_1} & \Z[H]\langle\ul0\rangle\ar@{->>}[d]\\
&&&\Z
}
\end{equation}
where $\pi_1(T,x)=\gen{m,l\mid r}$, $r=mlm^{-1}l^{-1}$ and
\begin{eqnarray*}
\partial_2(\ul r)&=&\ul{m}+m\ul{l}-mlm^{-1}\ul m-mlm^{-1}l^{-1}\ul l=(1-l)\ul{m}+(m-1)\ul l,\\
\partial_1(\ul m)&=&(m-1)\ul 0,\\
\partial_1(\ul l)&=&(l-1)\ul 0.
\end{eqnarray*}
Here some explanations about the notational conventions may be necessary. We underlined the generators of our modules in order to distinguish them from the corresponding elements of the group algebra $\Z[H]$; moreover, we introduced the formal symbol $\ul0$ to denote the $H$-generator of the $0$-chains. Similar conventions will be used for the other resolutions introduced in this chapter.

The functor $\ind^G_P(\slot)=\Z[G]\otimes_{\Z[H]}\slot$ is exact and maps projective $H$-modules to projective $G$-modules, thus we get a projective $G$-module resolution $(Q_\bullet,\delta_\bullet)$ of $\ind^G_H(\Z)$:
\begin{equation}\label{eq:G resol ind(Z)}
 \xymatrix{
0\ar[r] & \Z[G]\langle\ul r\rangle\ar[r]^-{\delta_2} & \Z[G]\langle\ul m\rangle\oplus\Z[G]\langle\ul l\rangle\ar[r]^-{\delta_1} & \Z[G]\langle\ul 0\rangle\ar@{->>}[d]\\
&&&\ind^G_H(\Z).
}
\end{equation}

The same technique yielding \eqref{eq:H resol Z 2} works for finding a projective $G$-module resolution $(P_\bullet,\gamma_\bullet)$ of $\Z$ (recall from Chapter \ref{chp:knotTheory} that nontrivial knot groups have cohomological dimension $2$):
\begin{equation}\label{eq:G resol Z}
 \xymatrix{
0\ar[r] & \bigoplus_{i=1}^{d-1}\Z[G]\langle\ul {r_i}\rangle\ar[r]^-{\gamma_2} & \bigoplus_{i=1}^d\Z[G]\langle\ul {a_i}\rangle\ar[r]^-{\gamma_1} & \Z[G]\gen{\ul1}\ar@{->>}[d]\\
&&&\Z,
}
\end{equation}
where $G=\langle a_1,\dots,a_n\mid r_1,\dots,r_{n-1}\rangle$ is a Wirtinger presentation of the knot group.
The boundary maps are as follows: any relator $r_i$ has the shape $a_ja_ka_l^{-1}a_k^{-1}$ and 
\begin{eqnarray*}
\gamma_2(\ul{a_ja_ka_l^{-1}a_k^{-1}})&=&\ul{a_j}+a_j\ul{a_k} - a_ja_ka_l^{-1}\ul{a_l}-a_ja_ka_l^{-1}a_k^{-1}\ul{a_k}\\
\gamma_1(\ul{a_i})&=&(a_i-1)\ul 0.
\end{eqnarray*}

The two maps $\delta_2$ and $\gamma_2$ are defined in order to codify the loops in the Cayley graphs of $H\cong\Z^2$ (with generating set $S=\{m,l,m^{-1},l^{-1}\}$) and $G$ (with generating set $S=\{a_1,a_i^{-1}\mid i=1,\dots,d\}$) respectively.
It is worth noting that they act on the generators of the respective domains like some sort of module-theoretic version of Fox derivations (cf. \cite[\S~1]{foxfree}), which inspires the following
\begin{defn}\label{def:Fox module derivative}
Let $R$ be a ring and $G$ be a group. Let $X=\{g_i\mid i\in I\}$ be a set of generators of $G$ and $\mathcal{F}(X)$ the free group on $X$. The \emph{Fox module-derivative} is the map $\Delta\colon \mathcal{F}(X)\to \oplus_{i\in I}R[G]\gen{\ul{g_i}}$ defined recursively by
\begin{equation}\label{eq:free calculus axioms}
\left\{
\begin{array}{lcl}
\Delta(g_i)=\ul{g_i},\\
\Delta(g_i^{-1})=-g_i^{-1}\cdot\Delta(g_i),\\
\Delta(vw)=\Delta(v) + v\cdot\Delta(w).
\end{array}
\right.
\end{equation}
\end{defn}

\begin{rem}\label{rem:involution}
$\Z[G]$ comes equipped with the involution (antipode)
\[
S\colon\Z[G]\to \Z[G],\; S(g)=g^{-1}
\]
that transforms left $G$-modules into right ones and viceversa. In more detail, given a left $G$-module $L$ with action $\cdot$, we denote $L\dx$ the right $G$-module whose underlying abelian group is $L$ endowed with the action $\star:L\times\Z[G]\to L$, $l\star g=g^{-1}\cdot l$. Viceversa,  given a right $G$-module $R$ with action $\star$, we denote $\sx R$ the left $G$-module whose underlying abelian group is $R$ endowed with the action $\cdot:\Z[G]\times R\to R$, $g\cdot r=r\star g^{-1}$.
\end{rem}

In particular, for any left $G$-module $M$ one can define a \emph{twisted} version of the dual module. Its underlying abelian group is
\[
M\ox=\hom^S_G(M,\Z[G])=\{f\colon M\to\Z[G]\mid f(am)=f(m)S(a)\},
\]
which becomes a left $G$-module via the action $a\cdot f\colon m\mapsto af(m)$.
Note that the natural $G$-action on the ``classical'' dual module, that is $M^*=\hom_G(M, \Z[G])=\{f\colon M\to\Z[G]\mid f(am)=af(m)\}$, is a \emph{right} action given by $f\star g\colon m\mapsto f(m)g$. We can twist it by the antipode to obtain a \emph{left} $G$-module
\[
\sx M^*=\hom_G(M, \Z[G])
\]
with action $g\cdot f= f\star g^{-1}\colon m\mapsto f(m)g^{-1}$.
The two versions of left dual module are naturally isomorphic through the composition with the antipode:
\[
\xymatrix@C+6pt{
\hom^S_G(M,\Z[G])\ar[r]^-{S\circ\slot} &  \sx\hom_G(M, \Z[G])
}
\]

Applying the functor $\slot\ox=\hom^S_G(\slot,\Z[G])$ to the augmented chain complex $P_\bullet\stackrel{\epsilon}{\to}\Z\to 0$ we obtain the chain complex of left $G$-modules
\begin{equation}\label{eq:dualchaincomplex}
\xymatrix{
 0\ar[r] & \sx\Z\ox\ar^-{\epsilon\ox}[r] & \Z[G]\langle\ul{1^*}\rangle\ar^-{\widetilde{\gamma}\ox_2}[r] & \bigoplus_{i=1}^d\ \Z[G]\langle \ul{a_i^*}\rangle\ar^-{\widetilde{\gamma}\ox_1}[r] & \bigoplus_{j=1}^{d-1}\ \Z[G]\langle \ul{r_j^*}\rangle.
}
\end{equation}
The term $\Z\ox=\hom^S_G(\Z,\Z[G])$ is $0$. In fact, as a group homomorphism $f\colon\Z\to\Z[G]$ sends every $z\in \Z$ to a sum $\sum z_{g_i}g_i$ with $z_{g_i}\neq 0$ for finitely many indices $i$, $f$ commutes with the $G$-actions if and only if for all $z\in\Z$ and $g\in G$ one has
\[
\sum z_{g_i}g_i=f(z)=f(gz)=f(z)g^{-1}=\sum z_{g_i}(g_ig^{-1}).
\]
On the other hand, $G$ is infinite and the multiplication action of $G$ on itself is free, which means that if some $z_{g_i}$ is nonzero, then all the infinitely many $\{z_{g_ig} \mid g\in G\}$ are.
This forces all $z_{g_i}$ to be $0$.
The homology of the chain complex is by construction the cohomology of $G$ with coefficients in the group ring $\Z[G]$. According to \cite{bieck}, $H^0(G,\Z[G])=H^1(G,\Z[G])=0$, so the complex \eqref{eq:dualchaincomplex} is exact everywhere but at the end, where its homology is $H^2(G,\Z[G])$. But since the dual functor $\slot^*=\hom_G(\slot,\Z[G])$ maps finitely generated projective left $G$-modules to finitely generated projective right $G$-modules (cf. for instance \cite[Prop.~I.8.3]{brown}), the functor $\slot\ox$ maps finitely generated projective left $G$-modules to finitely generated projective left $G$-modules. Thus, we actually get a projective resolution $(\widetilde{P}^\circledast_\bullet, \widetilde{\gamma}^\circledast_\bullet)$ of the left $G$-module $H^2(G,\Z[G])\ox$.
\begin{equation}\label{eq:P circledast}
 \xymatrix{
 0\ar[r] & \Z[G]\langle \ul{1^*}\rangle\ar^-{\widetilde{\gamma}\ox_2}[r] & \bigoplus_{i=1}^d\ \Z[G]\langle \ul{a_i^*}\rangle\ar^-{\widetilde{\gamma}\ox_1}[r] & \bigoplus_{j=1}^{d-1}\ \Z[G]\langle \ul{r_i^*}\rangle\ar@{->>}[d]\\
 &&&H^2(G,\Z[G])\ox.
}
\end{equation}
Some basic linear algebra provides the explicit behaviour of the boundary maps, as follows. We denote $r_{jkh}$ the Wirtinger relation $a_ja_ka_h^{-1}a_k^{-1}$ for some indices $j,k,h\in\{1,\dots,d\}$ (this notation is unambiguous once the generators are fixed) and we define
$$R=\left\{(j,k,h)\in\{1,\dots,d\}^3\mid r_{jkh}\mbox{ is a Wirtinger relation of }G\right\}.$$
Then
\begin{eqnarray}
\widetilde{\gamma}\ox_2(\ul{1^*})&=&\sum_{i=1}^d(a_i^{-1}-1)\ul{a_i^*}\\
\widetilde{\gamma}\ox_1(\ul{a_i^*})&=&\sum_{(k,h)\mid (i,k,h)\in R}\hspace*{-8pt}\ul{r^*_{ikh}}+\sum_{(j,h)\mid (j,i,h)\in R}\hspace*{-8pt}(a_j^{-1}-1)\ul{r^*_{jih}}-\sum_{(j,k)\mid (j,k,i)\in R}\hspace*{-8pt}a_k^{-1}\ul{r^*_{jki}}\nonumber
\end{eqnarray}
In principle, this resolution is concentrated in degrees $-2,-1,0$. One can then shift it by $+2$ to obtain a chain complex $(P^\circledast_\bullet, \gamma^\circledast_\bullet)=(\widetilde{P}^\circledast[2]_\bullet, \widetilde{\gamma}^\circledast[2]_\bullet)$ concentrated in non-negative degrees (cf. Section \ref{sec:derived}).

Now we have the following diagram with exact rows and exact rightmost column\par
\begin{equation}\label{eq:diagram of resolutions}
\xymatrix@R-8pt@C-8pt{\displaystyle
 &  &  &  & 0 \ar[d] \\
0 \ar[r]& \Z G\gen{\ul{1^*}}\ar[r] & \bigoplus_{i=1}^n\Z G\gen{\ul{a_i^*}}\ar[r]& \bigoplus_{j=1}^{n-1}\Z G\gen{\ul{r_j^*}}\ar@{>>}[r] & H^2(G,\Z G)\ar^-{\beta}[d] \\
0\ar[r] & \Z G\gen{\ul r}\ar[r] & \Z G\gen{\ul m}\oplus\Z G\gen{\ul l}\ar[r] & \Z G\gen{\ul 0}\ar@{>>}[r] & \ind^G_H\Z\ar^-{\alpha}[d] \\
0\ar[r] & \bigoplus_{j=1}^{n-1}\Z G\gen{\ul{r_j}}\ar[r] & \bigoplus_{i=1}^n\Z G\gen{\ul{a_i}}\ar[r] & \Z G\gen{\ul 1}\ar@{>>}[r] & \Z\ar[d]\\
 &  &  &  & 0.
}
\end{equation}
on which we can apply the functors $\mathrm{Tor}^G_\bullet(\underline{\phantom{M}},\Z)$. Taking into account that the resolution on the first row of Diagram \eqref{eq:diagram of resolutions} is the dual of the projective resolution of the trivial $G$-module $Z$ (third row), the resulting $9$-terms long exact sequence can be written as
\begin{equation}\label{eq:poitou-tate G}
\xymatrix{
0\ar[r] & H^0(G,\Z)\ar[r] & H_2(H,\Z)\ar[r] & H_2(G,\Z)\ar[dll] & \\
 & H^1(G,\Z)\ar[r] & H_1(H,\Z)\ar[r] & H_1(G,\Z)\ar[dll] & \\
 & H^2(G,\Z)\ar[r] & H_0(H,\Z)\ar[r] & H_0(G,\Z)\ar[r] & 0.
}
\end{equation}
This is strongly reminiscent of Poitou-Tate exact sequence for algebraic number fields.
It could seem that, apart from its great philosophical relevance, \eqref{eq:poitou-tate G} does not give any new information, since it reduces to
\[
\xymatrix{
0\ar[r] & \Z\ar[r] & \Z\ar[r] & 0\ar[dll] & \\
 & \Z\ar[r] & \Z\oplus\Z\ar[r] & \Z\ar[dll] & \\
 & 0\ar[r] & \Z\ar[r] & \Z\ar[r] & 0.
}
\]
But it is in conjunction with the unitarity of the restriction functor (see Proposition \ref{prop:res unitary}) that \eqref{eq:poitou-tate G} reveals its full power, as we shall now explain.

We can also dualise the resolution $Q_\bullet$ of $\ind^G_H(\Z)$. The same reasoning as for $P_\bullet$, together with the adjunction 
\[
 \hom_G(\ind^G_H(\Z),\Z[G])\cong\hom_H(\Z,\res^G_H(\Z[G])),
\]
leads to a fourth projective resolution $(Q_\bullet\ox, \delta_\bullet\ox)$ of the left $G$-module:
\begin{equation}\label{eq:Q circledast}
 \xymatrix{
 0\ar[r] & \Z[G]\langle \ul{0^*}\rangle\ar^-{\delta\ox_2}[r] & \Z[G]\langle \ul{m^*}\rangle\oplus\ \Z[G]\langle \ul{l^*}\rangle\ar^-{\delta\ox_1}[r] & \Z[G]\langle \ul{r^*}\rangle\ar@{->>}[d]\\
 &&&H^2(H,\res^G_H\Z[G]).
}
\end{equation}
The boundary maps are
\begin{eqnarray}
\delta\ox_2({\ul{0^*}})&=&(m^{-1}-1){\ul{m^*}}+(l^{-1}-1)\ul{l^*}\\
\delta\ox_1(\ul{m^*})&=&(1-l^{-1})\ul{r^*}\nonumber\\
\delta\ox_1(\ul{l^*})&=&(m^{-1}-1)\ul{r^*}\nonumber
\end{eqnarray}
One can define a chain map $\zeta_\bullet\colon Q_\bullet\to Q\ox_\bullet$ by
\begin{eqnarray}
\zeta_0(\ul{0})&=&\ul{r^*}\\
\zeta_1(\ul{m})&=&-m\ul{l^*}\nonumber\\
\zeta_1(\ul{l})&=&l\ul{m}\nonumber\\
\zeta_2(\ul{r})&=&-ml\ul{0^*}.\nonumber
\end{eqnarray}
This turns out to to be a self-duality in the category with duality of complexes of left $G$-modules $(\kom(G-\fr{M}od), \ox, \varpi)$ (see Definition \ref{def:duality and self-duality}), where the natural isomorphism $\varpi$ is given by
\[
 \varpi_M\colon M\to M\ox\! \ox,\qquad \varpi_M(x):f\to S(f(x))
\]
(so that, in our situation,
\[
 \varpi_Q(\ul 0)=0^{**},\;
 \varpi_Q(\ul m)=m^{**},\;
 \varpi_Q(\ul l)=l^{**},\;
 \varpi_Q(\ul r)=r^{**}).
\]

Indeed, $\zeta$ has inverse map
\begin{eqnarray}
\zeta^{-1}_0(\ul{r^*})&=&\ul 0\\
\zeta^{-1}_1(\ul{m^*})&=&l^{-1}\ul{l}\nonumber\\
\zeta^{-1}_1(\ul{l^*})&=&-m^{-1}\ul{m}\nonumber\\
\zeta^{-1}_2(\ul{1^*})&=&-l^{-1}m^{-1}\ul{r}\nonumber
\end{eqnarray}
and adjoint map $\zeta^\sharp_\bullet\colon Q_\bullet\ox\! \ox\to Q\ox_\bullet$ given by
\begin{eqnarray}
\zeta^\sharp_0(\ul{0^{**}})&=&-m^{-1}l^{-1}\ul{r^*}\\
\zeta^\sharp_1(\ul{m^{**}})&=&l^{-1}\ul{l^*}\nonumber\\
\zeta^\sharp_1(\ul{l^{**}})&=&-m^{-1}\ul{m^*}\nonumber\\
\zeta^\sharp_2(\ul{r^{**}})&=&\ul{0^*}.\nonumber
\end{eqnarray}

The map $\zeta$ fits into the diagram of bounded complexes of finitely generated projective $G$-modules
\begin{equation}\label{eq:self-dual triangles}
\xymatrix{
P\ox\ar@{.>}^-{?_1}[rr]\ar_-{Id_\bullet}[d] && Q\ar^-{\alpha_\bullet}[rr]\ar_-{\zeta_\bullet}[d] && P\ar@{.>}^-{?_2}[rr]\ar_-{(Id^*_\varpi)_\bullet}^-{=\,(\varpi_P)_\bullet}[d] && P\ox[1]\ar_-{Id[1]_\bullet}[d]\\
P\ox\ar^-{\alpha^*_\bullet}[rr] && Q\ox\ar@{.>}^{?_3}[rr] && P\ox\!\ox\ar@{.>}^{?_4}[rr] && P\ox[1]
}
\end{equation}
where $\alpha_\bullet$ is the map induced by $\alpha\colon\ind^G_H(\Z)\to\Z$ on the projective resolutions via the Comparison Theorem in Homological Algebra (cf. \cite[Section III.6]{maclane}.

Our goal is to complete this diagram to a commutative diagram of distinguished triangles (in the derived category 
of bounded complexes of finitely generated projective $G$-modules) that encodes a self-duality (cf. Definition \ref{def:triang duality and self-duality}) between the two rows.
Since $\zeta_\bullet$ is invertible, we can choose as $?_1$ the map $\zeta^{-1}_\bullet\circ\alpha^*_\bullet$, so that the leftmost square commutes.
Then the map $?_3$ ought to be $(\zeta_\bullet^{-1}\circ\alpha_\bullet^*)^*$, and in fact this choice would make the central square commute. Indeed, $(\zeta_\bullet^{-1}\circ\alpha_\bullet^*)^*=\alpha_\bullet^{**}\circ(\zeta_\bullet^{-1})^*$ and $(\zeta_\bullet^{-1})^*=(\zeta_\bullet^*)^{-1}$.
Moreover, the fact that $\zeta_\bullet$ is self-adjoint says that in the diagram
\[
 \xymatrix{
 & Q\ar^-{\alpha_\bullet}[rr]\ar_-{\zeta_\bullet}[ld]\ar^-{\varpi_Q}[dd] && P\ar_-{\varpi_P}[dd] \\
 Q\ar_-{(\zeta_\bullet^{-1})^*=(\zeta_\bullet^*)^{-1}}[rd]\ox && \\
 & Q\ox\!\ox\ar^-{\alpha^{**}_\bullet}[rr] && P\ox\!\ox
 }
\]
the left triangle commutes.
Finally, since $\varpi$ is a natural isomorphism $Id\to \slot^{**}$, we conclude that $\varpi_P\circ\alpha_\bullet=\alpha^{**}\circ(\zeta^{-1})^*\circ\zeta$.
As regards the rightmost square in Diagram \eqref{eq:self-dual triangles}, the map $?_2$ is uniquely determineed up to homotopy by imposing the first row to be a distinguished triangle. More precisely, $?_2$ should be a map $\xi_\bullet$ such that its homotopy class $[\xi]$ is the unique element of $\Ext_{\Z[G]}^1\left(H_0(P_\bullet), H_0(P_\bullet\ox)\right)$ (this is because the complexes $P_\bullet$ and $P_\bullet\ox$ are acyclic). The same holds for the map $?_4=:\vartheta_\bullet$: there is exactly one choice (up to homotopy) that makes the second row a distinguished triangle.
But $\varpi_P$ is invertible, and the map $Id[1]_\bullet\circ\xi_\bullet\circ\varpi_P^{-1}\colon P_\bullet\ox\!\ox\to P\ox[1]_\bullet$, which obviously makes the rightmost square commute, also turns the second row into an exact triangle, by the definition of $\varpi$, so $\vartheta_\bullet=Id[1]_\bullet\circ\xi_\bullet\circ\varpi_P^{-1}$ up to homotopy.
Summing up, we have proved 
\begin{lem}
$\dots\to P\ox_\bullet\to Q_\bullet\to P_\bullet\to P\ox[1]_\bullet\to\dots$ is a self-dual distinguished triangle in the derived category 
of bounded complexes of finitely generated projective left $G$-modules.
\end{lem}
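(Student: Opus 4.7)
The plan is to verify two things: that the top row of Diagram \eqref{eq:self-dual triangles} is a distinguished triangle in the derived category, and that the diagram as a whole encodes a self-duality in the sense of Definition \ref{def:triang duality and self-duality}. The first claim follows by combining the short exact sequence \eqref{eq:geometric sec} with the projective resolutions already constructed: on zeroth homology, $\alpha_\bullet\colon Q_\bullet\to P_\bullet$ realizes the surjection $\alpha\colon\ind_H^G(\Z)\twoheadrightarrow\Z$, whose kernel is $H^2(G,\Z[G])$, and $P_\bullet\ox$ is (after the shift by $+2$) a projective resolution of this kernel. Hence the mapping fiber of $\alpha_\bullet$ is quasi-isomorphic to $P_\bullet\ox$, completing the top row to a distinguished triangle $P_\bullet\ox\to Q_\bullet\to P_\bullet\to P_\bullet\ox[1]$.

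Next I would pin down the remaining morphisms in the diagram. Commutativity of the leftmost square is forced by setting $?_1:=\zeta_\bullet^{-1}\circ\alpha_\bullet^*$. For the central square I would choose $?_3:=\alpha_\bullet^{**}\circ(\zeta_\bullet^*)^{-1}$; commutativity then reduces to the identity $\varpi_P\circ\alpha_\bullet=\alpha_\bullet^{**}\circ(\zeta_\bullet^*)^{-1}\circ\zeta_\bullet$, which follows from the self-adjointness of $\zeta_\bullet$ (so that $(\zeta_\bullet^*)^{-1}\circ\zeta_\bullet=\varpi_Q$) combined with the naturality of $\varpi$ applied to $\alpha_\bullet$, exactly as sketched in the triangle diagram immediately preceding the lemma. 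The maps $?_2$ and $?_4$ are each uniquely determined up to homotopy by axiom (TR3) for triangulated categories, since both $P_\bullet$ and $P_\bullet\ox$ are acyclic in the relevant range. As $\varpi_P$ is an isomorphism, setting $?_4:=?_2\circ\varpi_P^{-1}$ both makes the rightmost square commute on the nose and keeps the bottom row a distinguished triangle.

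The main obstacle is verifying the self-adjointness of $\zeta_\bullet$ in the category with duality $(\kom(G-\fr{M}od),\ox,\varpi)$, i.e.\ that $\zeta_\bullet^\sharp\circ\varpi_Q$ is homotopic to $\zeta_\bullet$. This is a direct calculation using the explicit formulas for $\zeta_\bullet$, $\zeta_\bullet^{-1}$ and $\zeta_\bullet^\sharp$ written out above, together with the action of $\varpi_Q$ on the generators $\ul{0},\ul{m},\ul{l},\ul{r}$. Once this identity is checked, the bottom row of Diagram \eqref{eq:self-dual triangles} is by construction the $\ox$-dual of the top row, and the tuple of vertical arrows $(\mathrm{id},\zeta_\bullet,\varpi_P,\mathrm{id}[1])$ realizes an isomorphism between the top triangle and its dual. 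By Definition \ref{def:triang duality and self-duality}, this is exactly the data of a self-duality in the derived category of bounded complexes of finitely generated projective left $G$-modules, completing the proof.
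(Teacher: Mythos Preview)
Your proposal follows essentially the same route as the paper's own argument: identify the top row of Diagram \eqref{eq:self-dual triangles} as distinguished via the short exact sequence \eqref{eq:geometric sec}, fill in $?_1=\zeta_\bullet^{-1}\circ\alpha_\bullet^*$ and $?_3=\alpha_\bullet^{**}\circ(\zeta_\bullet^*)^{-1}$, use self-adjointness of $\zeta_\bullet$ together with naturality of $\varpi$ for the central square, and then set $?_4=?_2\circ\varpi_P^{-1}$. One small terminological slip: the uniqueness (up to homotopy) of $?_2$ is not a consequence of axiom TR3 but rather of the identification $\hom_{\fr D}(P_\bullet,P_\bullet\ox[1])\cong\Ext^1_{\Z[G]}\bigl(H_0(P_\bullet),H_0(P_\bullet\ox)\bigr)$, valid because $P_\bullet$ and $P_\bullet\ox$ are resolutions of their zeroth homology; this is exactly the acyclicity you invoke, so the content is right even if the label is off.
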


If $U$ is any finite-index subgroup of $G$, then $\res^G_U$ is a unitary functor (cf. Proposition \ref{prop:res unitary}), so it maps the self-dual distinguished triangle
\[\dots\to P\ox_\bullet\to Q_\bullet\to P_\bullet\to P\ox[1]_\bullet\to\dots\]
in the derived category of bounded complexes of finitely generated projective left $G$-modules
to a self-dual distinguished triangle 
in the derived category of bounded complexes of finitely generated projective left $U$-modules. Therefore, we automatically get a more general version of the $9$-terms exact sequence \eqref{eq:poitou-tate G}, namely
\begin{equation}\label{eq:poitou-tate U}
\xymatrix{
0\ar[r] & H^0(U,\Z)\ar[r] & \coprod_{G/U\! H} H_2(H\cap U,\Z)\ar[r] & H_2(U,\Z)\ar[lld] & \\
 & H^1(U,\Z)\ar[r] & \coprod_{G/U\! H} H_1(H\cap U,\Z)\ar[r] & H_1(U,\Z)\ar[lld] & \\
 & H^2(U,\Z)\ar[r] & \coprod_{G/U\! H} H_0(H\cap U,\Z)\ar[r] & H_0(U,\Z)\ar[r] & 0.
}
\end{equation}

\section{Some steps towards Conjecture D}
Now select the subsequence in degree $1$ of the exact sequence \eqref{eq:poitou-tate U}: Conjecture D claims that the first cohomology and homology groups of $E_\G G/U$ fit perfectly at the ends of the subsequence.
\begin{conjD}
Let $G$ be a knot group, $H\leq G$ a peripheral subgroup and $U\id G$ a normal subgroup of finite index. Then there is an exact sequence of groups
\[
\xymatrix@C-6pt{
0\ar[r] & H^1(E_\G G/U,\Z)\ar[d]\\
 & H^1(U,\Z)\ar[r] & \coprod_{G/U\! H} H_1(H\cap U,\Z)\ar[r] & H_1(U,\Z)\ar[d] & \\
 &&&H_1(E_\G G/U,\Z)\ar[r] & 0.
}
\]
\end{conjD}

From now on, coefficients in homology and cohomology are understood to be $\Z$ unless otherwise specified.

Conjecture D is certainly true in the trivial cases, that is, if $U$ is a knot group and $E_\G G/U$ is the $3$-sphere, or equivalently, if $UH=G$ and $\pi_1(E_\G G/U)=1$. Indeed, in this case Diagram \eqref{eq:poitou-tate U} is formally identical to Diagram \eqref{eq:poitou-tate G} and $H^1(E_\G G/U)=H_1(E_\G G/U)=0$.

Next, consider the case in which $U$ is still a knot group, but in a generic $3$-manifold, that is, $UH=G$, but $\pi_1(E_\G G/U)$ is arbitrary.
Then, applying the abelianisation functor to the sequence \eqref{eq:ses of groups} one gets the exact sequence
\[
\xymatrix{
\gen m\ar^-{\widehat\alpha}[r] & H_1(U)\ar[r] & H_1(E_\G G/U)\ar[r] & 0.
}
\]
One can enlarge the first entry and get a still exact sequence
\begin{equation*}
\xymatrix{
H_1(H)\ar^-{\widetilde\alpha}[r] & H_1(U)\ar[r] & H_1(E_\G G/U)\ar[r] & 0
}
\end{equation*}
by defining
\[
\widetilde\alpha(m)=\widehat\alpha(m),\quad\widetilde\alpha(l)=0.
\]
This sequence, its $\mathrm{hom}$-dual
\[
\xymatrix{
0\ar[r] & H^1(E_\G G/U)\ar[r] &  H^1(U)\ar^-{\widetilde\alpha^*}[r] & H^1(H)\ar[r] & 0
}
\]
(where we used the Universal Coefficient Theorem for cohomology, cf. \cite[Section 3.1]{hatcher}, along with torsion-freeness of $0$th homology groups) and the degree $1$ subsequence of \eqref{eq:poitou-tate U} fit into a diagram with exact rows
\small
\begin{equation}
\xymatrix{
&&& H_1\hskip-0.7pt(\hskip-0.7pt H\hskip-0.7pt)\ar^-{\widetilde\alpha}[r]\ar@{=}[d] & H_1\hskip-0.7pt(\hskip-0.7pt U\hskip-0.7pt)\ar@{=}[d]\ar[r] & H_1\!\hskip-1pt\left(\hskip-1.5pt \frac{E_\G G}{U}\hskip-1.2pt\right)\ar[r] & 0\\
&& H^1\hskip-0.7pt(\hskip-0.7pt U\hskip-0.7pt)\ar@{=}[d]\ar^-{H_{\hskip-0.4pt 1}\hskip-0.7pt(\hskip-0.7pt\beta_\bullet\hskip-0.7pt)}[r] & H_1\hskip-0.7pt(\hskip-0.7pt H\hskip-0.7pt)\ar_-{\wr}^-{H_{\hskip-0.4pt 1}\hskip-0.7pt(\hskip-0.7pt\res^G_U\zeta_\bullet\hskip-0.7pt)}[d]\ar^-{H_{\hskip-0.4pt 1}\hskip-0.7pt(\hskip-0.7pt\alpha_\bullet\hskip-0.7pt)}[r] & H_1\hskip-0.7pt(\hskip-0.7pt U\hskip-0.7pt) &&\\
0\ar[r] & H^1\!\hskip-1pt\left(\hskip-1.5pt\frac{E_\G G}{U}\hskip-1.2pt\right)\ar[r] & H^1\hskip-0.7pt(\hskip-0.7pt U\hskip-0.7pt)\ar^-{\widetilde\alpha^*}[r] & H^1\hskip-0.7pt(\hskip-0.7pt H\hskip-0.7pt) &&&
}
\end{equation}
\normalsize
Suppose further that the longitude $l\in H$ is null-homologous in $BU$.
Then the upper square obviously commutes by construction. The lower one commutes because $H_1(\res^G_U\beta_\bullet)$ is exactly the dual map of $H_1(\res^G_U\alpha_\bullet)$ composed with the Poincar\'e duality isomorphism $H_1(\res^G_U\zeta_\bullet))^{-1}$, as expressed in Diagram \eqref{eq:self-dual triangles} and Proposition \ref{prop:res unitary}.
Summing up, Conjecture D is true for normal subgroups $U$ such that $BU$ has only one (torus) boundary component, provided the longitude of $\partial BU$ is null-homologous in $BU$.

\appendix
\titleformat
{\chapter} 
[display] 
{\mdseries\huge\scshape} 
{\centering Appendix \thechapter} 
{0ex} 
{
    \rule{\textwidth}{1pt}
    \vskip1.2ex
    \centering
} 
\chapter[Duality]{Duality in derived categories}
Here we briefly discuss some technical tools from category theory used in the previous chapter. They involve the construction of various categories whose objects are chain complexes over a fixed abelian category.
\section{Derived categories}\label{sec:derived}
Throughout this section, let $\fr{A}$ be an abelian category. We will construct the derived category of $\fr A$ in $3$ steps, each of which consists of creating a new, increasingly sophisticated category based on $\fr A$.
\begin{defn}
The \emph{category of complexes over} $\hskip-0.5pt\fr A\hskip-0.5pt$ is the category $\hskip-0.5pt\kom\hskip-1pt(\hskip-1pt\fr A\hskip-1pt)$ defined as follows: 
\begin{description}
\item[Objects] chain complexes with entries in $\fr A$, i.e.
\[
A=(A^\bullet, d_A^\bullet)=\dots\to A^{k-1}\stackrel{d_A^{k-1}}{\to} A^k\stackrel{d_A^{k}}{\to} A^{k+1}\to\dots
\]
with $A^k$ and $d_A^k$ in $\fr A$ and $d_A^{k}\circ d_A^{k-1}=0$ for all $k$ (the $d_A^k$s are called \emph{boundary maps}).
\item[Morphisms] chain maps, i.e.
\[
\phi^\bullet\colon A\to B = (\phi^k\colon A^k\to B^k)_k
\]
with $\phi^{k+1}\circ d_A^k=d_B^k\circ\phi^k$ for all $k$. We will simply write $\phi\colon A\to B$ when there is no danger of confusion.
\end{description}
\end{defn}
In particular (at least when $\fr A$ is concrete, which is always true in our setting) one can take the (co)homology of a complex $H^k(A)=\ker d_A^k/\mathrm{im} d_A^{k-1}$ and view it as a chain complex with $0$ boundary maps. A chain map $f\colon A\to B$ induces a chain map in cohomology
\[
H^k(f)\colon H^k(A)\to H^k(B),\quad H^k(f)(a+\mathrm{im} d_A^{k-1})=f(a)+\mathrm{im} d_B^{k-1}
\]
Any object $X$ of $\fr A$ can be regarded as a complex $\dots\to 0\to 0\to X\to 0\to 0\to\dots$ concentrated in degree $0$ (i.e., with $X$ in the $0$th position and $0$ elsewhere). This complex has obviously trivial cohomology outside degree $0$, and its $0$th cohomology is isomorphic to $X$.
\begin{defn}
Given two morphisms of complexes $f, g\colon A \to B$, a \emph{chain homotopy} from $f$ to $g$ is a collection of maps $(h^k \colon A^k \to B^{k - 1})_k$ (not necessarily a chain map) such that
$f^k - g^k = d_B^{k - 1} h^k + h^{k + 1} d_A^k$:
\[
\xymatrix{
\dots\ar[r] & A^{k-1}\ar^-{d_A^{k-1}}[rr]\ar_-{f^{k-1}}[d]<-2pt>\ar^-{g^{k-1}}[d]<2pt> && A^k\ar^-{d_A^k}[rr]\ar_{h^k}[dll]\ar_-{f^{k}}[d]<-2pt>\ar^-{g^{k}}[d]<2pt> && A^{k+1}\ar^-{d_A^{k+1}}[r]\ar_{h^{k+1}}[dll]\ar_-{f^{k+1}}[d]<-2pt>\ar^-{g^{k+1}}[d]<2pt> &\dots\\
\dots\ar[r] & B^{k-1}\ar_-{d_B^{k-1}}[rr] && B^k\ar_-{d_B^k}[rr] &&B^{k+1}\ar_-{d_B^{k+1}}[r] &\dots
}
\]
In this case $f$ and $g$ are said to be \emph{(chain) homotopic} and this is denoted $f\sim g$.
\end{defn}
The relation $\sim$ is an equivalence relation on every $\hom_{\kom(\fr A)}(A,B)$. We can thus form a new category factoring it out.
\begin{defn}
The \emph{homotopy category of complexes over} $\fr A$ is the category $\fr K(\fr A)$ defined as follows:
\begin{description}
\item[Objects] the same as the objects of $\kom(\fr A)$.
\item[Morphisms] ``morphisms of complexes modulo homotopy'', that is,
\[
\hom_{\fr K(\fr A)}(A,B)=\hom_{\kom(\fr A)}(A,B)/\sim
\]
\end{description}
\end{defn}
As a consequence of the definition, an isomorphism in $\fr K(\fr A)$ is [represented by] a homotopy equivalence, that is a map $f \in\hom_{\kom(\fr A)}(A,B)$ for which there is another map $g\in\hom_{\kom(\fr A)}(B,A)$ such that $f\circ g \sim Id_B$ and $g \circ f \sim Id_A$. The most significant example of homotopy equivalence is the augmentation map from a projective resolution to the object it resolves. Hence, in $\fr K(\fr A)$ an object of $\fr A$ is isomorphic to all its projective resolutions.

It is not difficult to see that homotopic morphisms induce the same map in cohomology. It follows that a homotopy equivalence induces an isomorphism in cohomology. In general, however, the converse is not true, so the maps satisfying the latter property deserve their own name.
\begin{defn}
A morphism in $\kom(\fr{A})$ or in $\fr K(\fr A)$ is a \emph{quasi-isomorphism} if its induced map in cohomology is an isomorphism.
\end{defn}
The derived category of $\fr A$ is obtained from $\fr K(\fr A)$ in the same way as a localization of an integral domain, that is, adding formal inverses to a suitable class of elements. In the present context, one adds formal inverses to quasi-isomorphisms of $\fr K(\fr A)$.
\begin{defn}
The \emph{derived category of} $\fr A$ is the category $\fr D(\fr A)$ defined as follows:
\begin{description}
\item[Objects] the same as the objects of $\kom(\fr A)$.
\item[Morphisms] $\hom_{\fr D(\fr A)}(A,B)$ is made of equivalence classes of \emph{roofs}
\[
\xymatrix{
&X\ar_-{\alpha}[dl]\ar^-{\phi}[dr]&\\
A&&B
}
\]
with $\phi\in\hom_{\fr D(\fr A)}(X,B)$ a morphism and $\alpha\in\hom_{\fr D(\fr A)}(A,X)$ a quasi-isomorphism, where two roofs $A\stackrel{\alpha}{\leftarrow}X\stackrel{\phi}{\to}B$ and $A\stackrel{\beta}{\leftarrow}Y\stackrel{\psi}{\to}B$ are equivalent if and only if there is a third roof $X\stackrel{\kappa}{\leftarrow}Z\stackrel{\lambda}{\to}Y$ forming a commutative diagram
\[
\xymatrix{
&&Z\ar_-{\kappa}[dl]\ar^-{\lambda}[dr]&&\\
&X\ar_-{\alpha}[dl]\ar_-{\phi}[drrr]&&Y\ar^-{\beta}[dlll]\ar^-{\psi}[dr]&\\
A&&&&B
}
\]
\end{description}
\end{defn}
The derived category enjoys a universal property: it is the most general category in which quasi-isomorphisms of complexes become isomorphisms. Namely, let $\fr Q\colon\kom(\fr A)\to\fr D(\fr A)$ be the functor that maps every object to itself and the morphism $A\stackrel{\phi}{\to}B$ to the equivalence class of the roof $A\stackrel{Id}{\leftarrow}A\stackrel{\phi}{\to}B$. Then for any functor $\fr F\colon\kom(\fr A)\to\fr C$ transforming quasi-isomorphisms into isomorphisms there is a unique functor $\fr G\colon\fr D(\fr A)\to\fr C$ such that $\fr F=\fr G\circ\fr Q$.

For certain purposes it is useful to deal only with the complexes $A$ that are \emph{bounded from above} ($A^k=0$ for $k\gg 0$), \emph{bounded from below} ($A^k=0$ for $k\ll 0$), or just \emph{bounded} ($A^k=0$ for $k\gg 0$ and $k\ll 0$). The respective full subcategories of $\kom(\fr A)$ are usually denoted $\kom^-(\fr A)$, $\kom^+(\fr A)$, $\kom^b(\fr A)$. The ``derivation'' process can be performed on these subcategories as well, giving birth to the categories
\begin{equation}\label{eq:bounded cat}
\xymatrix{
\mbox{complexes} & \mbox{homotopy} & \mbox{derived}\\
\kom^-(\fr A)\ar@{~>}[r] & \fr K^-(\fr A)\ar@{~>}[r] & \fr D^-(\fr A)\\
\kom^+(\fr A)\ar@{~>}[r] & \fr K^+(\fr A)\ar@{~>}[r] & \fr D^+(\fr A)\\
\kom^b(\fr A)\ar@{~>}[r] & \fr K^b(\fr A)\ar@{~>}[r] & \fr D^b(\fr A)
}
\end{equation}
The derived categories in the last column admit the equivalent descriptions as the full subcategories of $\fr D(\fr A)$ consisting of the complexes $A$ with $H^k(A)=0$ for $k\gg 0$, with $H^k(A)=0$ for $k\ll 0$, and with $H^k(A)=0$ for $k\gg 0$ and $k\ll 0$, respectively.

In order to simplify the writing of some statements, we will call K-categories over $\fr A$ all the categories made of chain comlexes over $\fr A$, that is, $\kom(\fr A)$, $\fr K(\fr A)$, $\fr D(\fr A)$ and their bounded subcategories introduced in Diagram \eqref{eq:bounded cat}.

\section{Triangles}\label{sec:triangles}
Derived categories are additive but not abelian (cf. \cite[Section III.3]{gelman}), hence the concept of exact sequence does not make sense in them. Nevertheless, relaxing the requirements a bit, it is possible to get a quite powerful deputy tool: the aim of this section is to introduce it.

Let $\fr C$ be a K-category over $\fr A$ and fix $n\in\Z$. For any complex $A$ in $\fr C$, one can consider the translated complex $A[n]$ defined by 
\[
A[n]^k=A^{n+k}, d_{A[n]}^k=(-1)^nd_A^{n+k}.
\]
Also, for any morphism $\phi\colon A\to B$, one can define the translated morphism $\phi[n]\colon A[n]\to B[n], \phi[n]^k=\phi^{n+k}$. This defines an autoequivalence of categories
\[
T^n\colon\fr C\to\fr C,\quad T^n(A)=A[n],\quad T^n(\phi)=\phi[n]
\]
called \emph{$n$-shift} or \emph{$n$-translation}.

To any morphism $\phi\colon A\to B$ in $\kom(\fr A)$ one can associate two complexes:
\begin{enumerate}
\item the \emph{cone} $C(\phi)$ of $\phi$, defined by
\begin{eqnarray*}
C(\phi)^k&=&A[1]^k\oplus B^k,\\
\quad d_{C(\phi)}^k(a^{(k+1)},b^{(k)})&=&\left(-d_A(a^{(k+1)}),\phi(a^{(k+1)})\!+\!d_B(b^{(k)})\right);
\end{eqnarray*}
\item the \emph{cylinder} $Cyl(\phi)$ of $\phi$, defined by
\begin{eqnarray*}
Cyl(\phi)^k&=&A^k\oplus A[1]^k\oplus B^k,\\
d_{Cyl(\phi)}^k\!(\!a^{(k)}\!,\!a^{(k\!+\!1)}\!,\!b^{(k)}\!)\!\!\!\!\!&=&\!\!\!\!\!\Big(\!d_A(\!a^{(k)}\!)\!-\!a^{(k\!+\!1)}\!,\, -d_A(\!a^{(k\!+\!1)}\!)\!,\,\phi(\!a^{(k\!+\!1)}\!)\!+\!d_B(\!b^{(k)}\!)\!\Big).
\end{eqnarray*}
\end{enumerate}
Clearly one can consider the cone and the cylinder of a morphism also as objects in the homotopy category of complexes or in the derived category as well.

\begin{defn}
A \emph{triangle} in $\fr C$ is a diagram of objects and morphisms in $\fr C$
\[
A\stackrel{\phi}{\to}B\stackrel{\chi}{\to}C\stackrel{\psi}{\to}A[1].
\]
\end{defn}
The archetypical example of triangle is the \emph{cylinder-cone} sequence of a morphism $\phi\in\hom_{\kom(\fr A)}(A,B)$. It is
\[
A\stackrel{\varphi}{\to}Cyl(\phi)\stackrel{\pi}{\to}C(\phi)\stackrel{\delta}{\to}A[1],
\]
where
\begin{eqnarray*}
 \varphi(a^{(k)})&=&(a^{(k)},0,0),\\
 \pi(a^{(k)},a^{(k+1)},b^{(k)})&=&(a^{(k+1)},b^{(k)}),\\
 \delta(a^{(k+1)},b^{(k)})&=&a^{(k+1)}.
\end{eqnarray*}
\begin{defn}
A \emph{morphism of triangles} is a commutative diagram
\[
\xymatrix{
A\ar^-{\phi}[r]\ar^-{\rho}[d] & B\ar^-{\chi}[r]\ar^-{\sigma}[d] & C\ar^-{\psi}[r]\ar^-{\tau}[d] & A[1]\ar^-{\rho[1]}[d]\\
D\ar^-{\lambda}[r] & E\ar^-{\mu}[r] & F\ar^-{\nu}[r] & A[1];
}
\]
it is an \emph{isomorphism} if $\rho$, $\sigma$, $\tau$ are isomorphisms (in their category!).

Finally, a triangle is \emph{distinguished} if it is isomorphic to the cylinder-cone sequence
\[
A\stackrel{\varphi}{\to}Cyl(\phi)\stackrel{\pi}{\to}C(\phi)\stackrel{\delta}{\to}A[1]
\]
of some $\phi\in\hom_{\kom(\fr A)}(A,B)$.
\end{defn}
The fact that distinguished triangles in derived categories are a generalization of exact sequences is expressed in the following result, that we have already used.
\begin{prop}[\cite{gelman}, Proposition III.5]
A short exact sequence in $\kom(\fr A)$ is isomorphic in $\fr D(\fr A)$ (that is, quasi-isomorphic in $\kom(\fr A)$) to a distinguished triangle.
\end{prop}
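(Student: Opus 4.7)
The plan is to exhibit an explicit isomorphism of triangles in $\fr D(\fr A)$ between the given short exact sequence (extended by a suitable connecting morphism) and the cylinder-cone triangle of its first map, which is distinguished by definition.

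Start with a short exact sequence $0\to A\stackrel{\phi}{\to}B\stackrel{\chi}{\to}C\to 0$ in $\kom(\fr A)$. The cylinder-cone triangle
\[
A\stackrel{\varphi}{\to}Cyl(\phi)\stackrel{\pi}{\to}C(\phi)\stackrel{\delta}{\to}A[1]
\]
is distinguished. I would introduce two comparison chain maps: the standard ``collapse'' $\rho\colon Cyl(\phi)\to B$ sending $(a^{(k)},a^{(k+1)},b^{(k)})\mapsto \phi(a^{(k)})+b^{(k)}$, and the projection $\eta\colon C(\phi)\to C$ sending $(a^{(k+1)},b^{(k)})\mapsto \chi(b^{(k)})$. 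A direct computation using the definitions of the boundary maps $d_{Cyl(\phi)}$ and $d_{C(\phi)}$, together with $\chi\circ\phi=0$, shows that both $\rho$ and $\eta$ are chain maps, and that they fit into a diagram
\[
\xymatrix{
A\ar^-{\varphi}[r]\ar@{=}[d] & Cyl(\phi)\ar^-{\pi}[r]\ar^-{\rho}[d] & C(\phi)\ar^-{\delta}[r]\ar^-{\eta}[d] & A[1]\ar@{=}[d]\\
A\ar^-{\phi}[r] & B\ar^-{\chi}[r] & C\ar^-{\delta\eta^{-1}}[r] & A[1]
}
\]
whose squares commute on the nose in $\kom(\fr A)$: indeed, $\rho\varphi=\phi$, and $\chi\rho=\eta\pi$ since the $A$-coordinate contribution to $\chi\rho$ is killed by $\chi\phi=0$.

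The key point, and the only nontrivial step, is to verify that $\eta$ is a quasi-isomorphism; once this is established, $\rho$ is also a quasi-isomorphism (it is even a classical homotopy equivalence, as one sees by writing the obvious section $b\mapsto(0,0,b)$ and an explicit contracting homotopy), so all vertical arrows are isomorphisms in $\fr D(\fr A)$. To prove $\eta$ is a quasi-isomorphism I would consider the short exact sequence of complexes
\[
0\to B\to C(\phi)\to A[1]\to 0,
\]
with the inclusion $b\mapsto(0,b)$ and the projection $(a,b)\mapsto a$. The long exact cohomology sequence has connecting map $H^{k+1}(\phi)$ (this is essentially the definition of the cone), producing the long exact sequence
\[
\dots\to H^k(A)\xrightarrow{H^k(\phi)}H^k(B)\to H^k(C(\phi))\to H^{k+1}(A)\xrightarrow{H^{k+1}(\phi)}H^{k+1}(B)\to\dots
\]
On the other hand, the short exact sequence $0\to A\to B\to C\to 0$ gives an analogous long exact sequence with $H^k(C(\phi))$ replaced by $H^k(C)$ and the middle map replaced by $H^k(\chi)$. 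These two long exact sequences fit in a ladder in which $\eta_\ast$ appears as the only non-identity vertical map, and the five lemma forces $\eta_\ast$ to be an isomorphism in every degree.

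The main obstacle is just bookkeeping: one must carefully identify the connecting morphism of the sequence $0\to B\to C(\phi)\to A[1]\to 0$ as $H^{\bullet+1}(\phi)$ (which requires a brief snake-lemma chase through the sign convention for $d_{C(\phi)}$) and then verify the commutativity of the ladder so that the five lemma applies. Once this is done, the morphism of triangles above is an isomorphism in $\fr D(\fr A)$, and the short exact sequence, extended by the roof $C\stackrel{\eta}{\leftarrow}C(\phi)\stackrel{\delta}{\to}A[1]$, is isomorphic in $\fr D(\fr A)$ to the distinguished cylinder-cone triangle, completing the proof.
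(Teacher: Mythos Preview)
The paper does not actually prove this proposition; it simply quotes it with a reference to Gelfand--Manin, so there is no in-paper proof to compare against. Your argument is correct and is essentially the standard one from that reference: build the comparison maps $\rho\colon Cyl(\phi)\to B$ and $\eta\colon C(\phi)\to C$, check the squares commute, and prove $\eta$ is a quasi-isomorphism.

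One minor remark on your route to the quasi-isomorphism of $\eta$. Your five-lemma ladder works, but the bookkeeping you flag (identifying the connecting map of $0\to B\to C(\phi)\to A[1]\to 0$ with $H^{\bullet+1}(\phi)$ and matching it to the snake-lemma connecting map of the original sequence) can be bypassed entirely. A cleaner device, and the one Gelfand--Manin use, is to observe that the kernel of $\eta$ is exactly the cone $C(\mathrm{id}_A)$, embedded via $(a^{(k+1)},a'^{(k)})\mapsto(a^{(k+1)},\phi(a'^{(k)}))$; this uses only injectivity of $\phi$ and surjectivity of $\chi$. Since $C(\mathrm{id}_A)$ is contractible, the short exact sequence $0\to C(\mathrm{id}_A)\to C(\phi)\stackrel{\eta}{\to}C\to 0$ immediately gives that $\eta$ is a quasi-isomorphism, with no sign chases needed.
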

The fact that they are a good substitute for exact sequences is stated in the 
\begin{thm}[\cite{gelman}, Theorem III.6]
Let
\[
\xymatrix{
A\ar^-{\phi}[r] & B\ar^-{\chi}[r] & C\ar^-{\psi}[r] & A[1]
}
\]
be a distinguished triangle in $\fr D(\fr A)$. Then the sequence
\[
\xymatrix{
\dots\ar[r] & H^k(A)\ar^-{H^k(\phi)}[r] & H^k(B)\ar^-{H^k(\chi)}[r] & H^k(C)\ar^-{H^k(\psi)}[r] & H^{k+1}(A)\ar^-{H^{k+1}(\phi)}[r] & \dots\\
}
\]
is exact.
\end{thm}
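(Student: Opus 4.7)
The plan is to reduce the statement to the classical snake-lemma-based long exact sequence for short exact sequences of chain complexes. The first observation is that cohomology $H^k\colon\kom(\fr A)\to\fr A$ sends quasi-isomorphisms to isomorphisms, so by the universal property of $\fr D(\fr A)$ it factors through a functor $H^k\colon\fr D(\fr A)\to\fr A$. Consequently, the exactness of the claimed long sequence depends only on the isomorphism class of the triangle in $\fr D(\fr A)$, so by the very definition of ``distinguished'' we may assume that, up to a chosen isomorphism of triangles, the given triangle coincides with the cylinder-cone triangle $A\stackrel{\varphi}{\to}Cyl(f)\stackrel{\pi}{\to}C(f)\stackrel{\delta}{\to}A[1]$ attached to some chain map $f\colon A\to B$.

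From the explicit description of $Cyl(f)$ and $C(f)$ recalled in the paper, one reads off a short exact sequence of complexes
\[
0\to A\stackrel{\varphi}{\to}Cyl(f)\stackrel{\pi}{\to}C(f)\to 0,
\]
which is split degreewise since $Cyl(f)^k=A^k\oplus A^{k+1}\oplus B^k$. The standard snake-lemma construction therefore yields the long exact sequence
\[
\cdots\to H^k(A)\to H^k(Cyl(f))\to H^k(C(f))\to H^{k+1}(A)\to\cdots
\]
It remains to identify the three involved maps. The projection $Cyl(f)\to B$, $(a,a',b)\mapsto b$, is a homotopy equivalence (with homotopy inverse $b\mapsto(0,0,b)$), hence induces an isomorphism $H^k(Cyl(f))\cong H^k(B)$ under which the arrow $H^k(\varphi)$ becomes $H^k(f)$ and the arrow out of $H^k(Cyl(f))$ becomes the map induced by the natural inclusion $B\hookrightarrow C(f)$. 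A one-line diagram chase on a cocycle $(a',b)\in C(f)^k$, lifted to $(0,a',b)\in Cyl(f)^k$, shows that the connecting homomorphism agrees with $H^k(\delta)$ up to sign.

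The remaining bookkeeping consists in transporting exactness from the cylinder-cone representative back to the originally given triangle: since an isomorphism of triangles in $\fr D(\fr A)$ is given by three mutually compatible isomorphisms whose images under $H^k$ are isomorphisms in $\fr A$, one obtains a commuting ladder between the two long sequences with isomorphic vertical arrows, and exactness transfers. The main (and essentially only) obstacle I expect is the sign/naturality verification of the connecting map in the identification with $H^k(\delta)$; this is a standard unwinding of the snake-lemma definition of the connecting homomorphism against the explicit differential of $Cyl(f)$, but it must be carried out carefully to match the conventions used when defining the cone.
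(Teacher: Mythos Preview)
The paper does not supply a proof of this theorem: it is quoted verbatim from \cite{gelman} (Theorem III.6) and used as a black box in the appendix. There is therefore nothing in the paper to compare your argument against.

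That said, your proposal is the standard and correct route to this result, and it is essentially the argument one finds in Gelfand--Manin. The reduction to the cylinder--cone triangle, the degreewise-split short exact sequence $0\to A\to Cyl(f)\to C(f)\to 0$, the snake-lemma long exact sequence, and the identification of $Cyl(f)\simeq B$ are all on target. Your caution about the sign in the connecting homomorphism is warranted: with the paper's conventions the snake-lemma connecting map differs from $H^k(\delta)$ by a sign, but since a sign change is an isomorphism this does not disturb exactness. If you want the sequence to literally read $H^k(\delta)$ rather than $-H^k(\delta)$, you can absorb the sign into the isomorphism of triangles at the outset.
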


Given a functor $\fr F\colon\fr A\to\fr B$ between abelian categories, one can extend it to chain complexes by making the extension act componentwise. Since this extension preserves the homotopy between morphisms, it induces a functor $\fr K(\fr F)\colon\fr K(\fr A)\to\fr K(\fr B)$. If $\fr F$ is exact, $\fr K(\fr F)$ maps quasi-isomorphisms to quasi-isomorphisms, so it induces a \emph{derived} functor $\fr D(\fr F)\colon\fr D(\fr A)\to\fr D(\fr B)$ and this one maps distinguished triangles to distinguished triangles. A functor between derived categories with this property is called \emph{exact}.
The same course holds in the subcategories of bounded complexes.

\section{Triangulated categories}
The machinery of distinguished triangles was developed in parallel to the concept of derived category by Jean-Louis Verdier in his PhD thesis. He realized that the properties of the  distinguished triangles in a derived category give birth to an interesting structure that is worth of independent studies. So he provided general axioms to encode the behaviour of distinguished triangles.
\begin{defn}
A \emph{triangulation} on an additive category $\fr C$ is an additive self-equivalence $T\colon\fr C\to\fr C$ together with a collection $\mathcal T$ of triangles
\[
 A\stackrel{\phi}{\to} B\stackrel{\chi}{\to} C\stackrel{\psi}{\to} T(A),
\]
called the \emph{distinguished triangles}, such that the following axioms hold.
\begin{description}
 \item[TR1] \begin{itemize}
             \item Any triangle isomorphic to a distinguished one is itself distinguished.
             \item Any morphism $A\stackrel{\phi}{\to}B$ can be completed to a distinguished triangle $A\stackrel{\phi}{\to} B\stackrel{\chi}{\to} C\stackrel{\psi}{\to} T(A).$
             \item The triangle $A\stackrel{Id}{\to}A\to 0\to T(A)$ is distinguished.
            \end{itemize}
 \item[TR2] A triangle $A\stackrel{\phi}{\to} B\stackrel{\chi}{\to} C\stackrel{\psi}{\to} T(A)$ is distinguished if and only if the triangle $B\stackrel{\chi}{\to} C\stackrel{\psi}{\to} T(A)\stackrel{-T(\phi)}{\to} T(B)$ is distinguished.
 \item[TR3] Any diagram
 \[
 \xymatrix{
A\ar^-{\phi}[r]\ar^-{\rho}[d] & B\ar^-{\chi}[r]\ar^-{\sigma}[d] & C\ar^-{\psi}[r] & A[1]\ar^-{\rho[1]}[d]\\
D\ar^-{\lambda}[r] & E\ar^-{\mu}[r] & F\ar^-{\nu}[r] & A[1]
}
 \]
in which the rows are distinguished and $\sigma\phi=\lambda\rho$ can be completed, through a morphism $\tau\colon C\to F$, to a morphism of triangles.
 \item[TR4] Given distinguished triangles
\[
\begin{array}{c}
    A \stackrel{\alpha}{\rightarrow} B \stackrel{\beta}{\rightarrow} G \stackrel{\gamma}{\rightarrow} T(A),\\ 
    B \stackrel{\delta}{\rightarrow} C \stackrel{\epsilon}{\rightarrow} Z \stackrel{\zeta}{\rightarrow} T(B),\\
    A \stackrel{\delta\alpha}{\rightarrow} C \stackrel{\theta}{\rightarrow} I \stackrel{\iota}{\rightarrow} T(A),
\end{array}
\]
there exists a distinguished triangle
\[
    G \stackrel{\phi}{\rightarrow} I \stackrel{\chi}{\rightarrow} Z \stackrel{\psi}{\rightarrow} T(G)
\]
such that
\[
 \xymatrix@C-4pt{
 \epsilon=\chi\theta, & \gamma=\iota\phi, & \psi=T(\beta)\zeta, & \zeta\chi=T(\alpha)\iota, & \phi\beta=\theta\delta.
 }
\]
\end{description}
A \emph{triangulated category} is an additive category $\fr C$ equipped with a triangulation $(T, \mathcal T)$ (we will denote it by the same symbol as the underlying additive category, if there is no danger of confusion).
\end{defn}
In Sections IV.1 and IV.2 of \cite{gelman} it is proved that for any abelian category $\fr A$ the $1$-translation functor together with the distinguished triangles we defined in Section \ref{sec:triangles} constitute actual triangulations on each of $\fr K(\fr A)$, $\fr D(\fr A)$ and (since cones and cylinders of morphisms of somehow bounded complexes are bounded of the same type) their bounded variants \eqref{eq:bounded cat}. These triangulations shall be named \emph{canonical} in order to underline their prominence and naturality.

An additive functor $\fr F\colon (\fr C,T,\mathcal T)\to (\fr C',T',\mathcal T')$ between triangulated categories is \emph{graded} if there is a natural isomorphism $\fr F\circ T\cong T'\circ\fr F$. It is \emph{$\delta$-exact}, $\delta\in\{\pm1\}$, if it is graded and for every distinguished triangle $A\stackrel{\phi}{\to} B\stackrel{\chi}{\to} C\stackrel{\psi}{\to} T(A)$ the triangle 
\[
\fr F(A)\stackrel{\fr F(\phi)}{\to} \fr F(B)\stackrel{\fr F(\chi)}{\to} \fr F(C)\stackrel{\delta\cdot\fr F(\psi)}{\to} T'(\fr F(A))
\]
is distinguished. Similar definitions hold in case $\fr F$ is a contravariant functor: $\fr F$ graded means $\fr F\circ T\cong (T')^{-1}\circ\fr F$ and $\fr F$ $\delta$-exact means for every distinguished triangle $A\stackrel{\phi}{\to} B\stackrel{\chi}{\to} C\stackrel{\psi}{\to} T(A)$ the triangle 
\[
\fr F(C)\stackrel{\fr F(\chi)}{\to} \fr F(B)\stackrel{\fr F(\phi)}{\to} \fr F(A)\stackrel{\delta\cdot T'(\fr F(\psi))}{\to} T'(\fr F(C))
\]
is distinguished.

\section{Dualities and unitary functors}
\begin{defn}\label{def:duality and self-duality}
Let $\fr C$ be a category. A couple $(\slot^\sharp,\varpi)$ consisting of a contravariant functor $\slot^\sharp\colon\fr C\to\fr C$, together with a natural isomorphism $\varpi\colon Id_{\fr C}\to\slot^{\sharp\sharp}$ satisfying
\begin{equation}
\varpi_A^\sharp\circ\varpi_{A^\sharp}= Id_{A^\sharp}
\end{equation}
for all objects $A$ of $\fr C$, is called a \emph{duality}.

Let $(\fr C,\slot^\sharp,\varpi)$ be a category with duality. Then any morphism $\phi\colon A \to B^\sharp$ in $\fr C$ has an adjoint map
\[
 \phi^\sharp_\varpi:=\phi^\sharp\circ\varpi_B\colon B\to A^\sharp.
\]
A map $\phi\colon A\to A^\sharp$ satisfying $\phi^\sharp_\varpi=\phi$ is called \emph{self-adjoint}. A \emph{self-duality} is a self-adjoint isomorphism; in this case, each of its domain and codomain is a \emph{self-dual} object.
\end{defn}

When dealing with triangulated categories, one requires dualities to respect also the extra bit of structure given by the translation functor, namely
\begin{defn}\label{def:triang duality and self-duality}
Let $\fr C$ be a triangulated category. A couple $(\slot^\sharp,\varpi)$ consisting of a $\delta$-exact contravariant functor $\slot^\sharp\colon\fr C\to\fr C$, together with a natural isomorphism $\varpi\colon Id_{\fr C}\to\slot^{\sharp\sharp}$ satisfying
\begin{eqnarray}
 \varpi_A^\sharp\circ\varpi_{A^\sharp}&=& Id_{A^\sharp},\\
 \varpi_{T(A)} &=& T_{\varpi(A)},
\end{eqnarray}
for all objects $A$ of $\fr C$, is called a \emph{$\delta$-duality}.
In this case, by a \emph{self-duality} we mean an isomorphism of distinguished triangles
\[
\xymatrix{
A\ar^-{\phi}[r]\ar^-{\rho}[d] & B\ar^-{\chi}[r]\ar^-{\sigma}[d] & C\ar^-{\psi}[r]\ar^-{\tau}[d] & A[1]\ar^-{\rho[1]}[d]\\
C^\sharp\ar^-{\chi^\sharp}[r] & B^\sharp\ar^-{\phi^\sharp}[r] & A^\sharp\ar^-{\delta\cdot\psi^\sharp[1]}[r] & C^\sharp[1]
}
\]
such that $\sigma^\sharp\circ\varpi_B=\sigma$ and $\tau=\rho^\sharp\circ\varpi_C$. Each row of the above diagram is a \emph{self-dual} distinguished triangle.
\end{defn}
See \cite{balmer} for more details.

\begin{defn}
Let $(\mathfrak{C}, ^\sharp, \varpi)$ and $(\mathfrak{D}, ^\flat, \varrho)$ be two triangulated categories with duality. A \emph{unitary functor} \[(\mathfrak G, \varsigma)\colon (\mathfrak{C}, \slot^\sharp, \varpi)\to (\mathfrak{D}, \slot^\flat, \varrho)
\]
is a covariant $(+1)$-exact functor $\mathfrak C\to\mathfrak D$, together with a natural isomorphism of contravariant functors $\varsigma\colon \fr G(\slot^\sharp)\to\fr G(\slot)^\flat$ making the diagrams
\begin{equation}\label{eq:diagram for unitary}
\xymatrix{
\fr G(A)\ar[rr]^-{\fr G(\varpi(A))}\ar[d]_{\varrho(\fr G(A))} && \fr G(A^{\sharp\sharp})\ar[d]^{\varsigma(A^\sharp)}\\
\fr G(A)^{\flat\flat}\ar[rr]_-{\varsigma(A)^\flat} && \fr G(A^\sharp)^\flat
}
\end{equation}
commute for all objects $A$ in $\fr C$.
\end{defn}
In particular, a unitary functor takes self-dual distinguished triangles to self-dual distinguished triangles.

\section{The Restriction functor}
Let $R$ be a ring and $A$ an $R$-algebra. Then $\fr P\kom(A)$ shall denote the category whose objects are bounded chain complexes of finitely generated projective left $A$-modules and whose morphisms are chain maps. The derivation process of Section \ref{sec:derived} can be performed seamlessly in order to obtain the corresponding homotopy category $\fr{PK}(A)$ and finally the derived category $\fr P(A)$. The category $\fr P\kom(A)$ contains as objects the mapping cylinder and the mapping cone of all its morphisms, hence $\fr{PK}(A)$ and $\fr P(A)$ come equipped with a preferred triangulation, that is the restriction of the canonical triangulation introduced in Section \ref{sec:triangles}.

A particular instance of the preceding constrution involves as ingredients the ring $\Z$ and the group algebra $\Z G$ of some group $G$. Group algebras have an additional piece of structure, namely the canonical antipode $S\colon\Z[G]\to \Z[G], S(g)=g^{-1}$ introduced in Remark \ref{rem:involution}. This serves to define a duality on $\fr P(\Z G)$, as follows.
If $P$ is a finitely generated projective left $\Z G$-module, $P^\ast:=\hom_{G}(P,\Z G)$ is a finitely generated projective right $\Z G$-module. Twisting the action on $P^*$ by the antipode we get a finitely generated projective left $\Z G$ module
\[
 P\ox=\hom_G^S(P,\Z G)=\{\alpha\colon P\to\Z G\mid\alpha(a\ldotp p)=\alpha(p)S(a)\}.
\]
Extending the functor $\slot\ox$ componentwise to $\fr P\kom(\Z G)$ defines a contravariant $(+1)$-exact functor, still denoted by $\slot\ox$, given by
\[
 P\ox_k=\hom_G^S(P^{-k},\Z G),\quad d_k^{P\ox}(p_{(k)}^*)(p^{(1-k)})=p_{(k)}^*(d_P^{1-k}(p^{(1-k)}))
\]
where $\Z G$ is concentrated in degree $0$.
This functor passes to the derived category $\fr P(\Z G)$ (we will still use the notation $\slot\ox$) and, together with the natural isomorphism
\[
 \varpi\colon Id_{\fr P(\Z G)}\to\slot\ox\!\, \ox,\quad \varpi_{P}(p^{(k)})(q^*_{(-k)})=S(q^*_{(-k)}(p^{(k)})),
\]
forms a $(+1)$-duality on $\fr P(\Z G)$ (a proof of this will be presented in \cite{latest}).

In the following, a category of the form $\fr P(\Z G)$ will be intended to be equipped with the triangulation and the duality just mentioned.

\begin{prop}\label{prop:res unitary}
Let $G$ be a group and $U\leq G$ a subgroup of finite index. The functor $\res^G_U\colon\fr P(\Z G)\to\fr P(\Z U)$ is unitary.
\end{prop}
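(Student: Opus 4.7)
The plan is to exhibit the natural transformation $\varsigma\colon\res^G_U(\slot\ox)\to(\res^G_U\slot)\ox$ explicitly and then verify it satisfies the two conditions of a unitary functor, namely that $\res^G_U$ is a covariant $(+1)$-exact functor and that diagram \eqref{eq:diagram for unitary} commutes.

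First I would establish the basic formal properties of $\res^G_U$. Since $[G\colon U]<\infty$, the ring $\Z G$ is finitely generated free as a right $\Z U$-module, so restriction sends finitely generated projective left $\Z G$-modules to finitely generated projective left $\Z U$-modules, and extends componentwise to a functor $\fr P\kom(\Z G)\to\fr P\kom(\Z U)$. Because mapping cones, mapping cylinders, shifts and direct sums of chain complexes are all built entry-wise, restriction commutes with all of them on the nose, so the induced functor $\fr P(\Z G)\to \fr P(\Z U)$ is graded and sends the canonical cone triangles to canonical cone triangles with no sign change: that is, $\res^G_U$ is $(+1)$-exact.

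Next I would fix a transversal and let $\pi\colon\Z G\to\Z U$ be the $\Z$-linear projection sending $u\in U$ to $u$ and each element of $G\setminus U$ to $0$. The map $\pi$ is simultaneously a left and a right $\Z U$-module homomorphism, and it intertwines the antipode $S$ on $\Z G$ with the antipode on $\Z U$. For a finitely generated projective left $\Z G$-module $P$, I define
\[
\varsigma_P\colon\res^G_U\hom^S_G(P,\Z G)\longrightarrow\hom^S_U(\res^G_U P,\Z U),\qquad \varsigma_P(f)=\pi\circ f.
\]
A brief check using the twisted equivariance $f(ap)=f(p)a^{-1}$ together with the right $\Z U$-linearity of $\pi$ shows $\varsigma_P(f)$ lies in the target; $U$-equivariance of $\varsigma_P$ comes from the left $\Z U$-linearity of $\pi$. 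To see $\varsigma_P$ is bijective, I would verify it in the rank-one free case $P=\Z G$ by evaluating at $1_G$ (both sides identify with $\Z G$ as left $\Z U$-modules, and $\varsigma_{\Z G}$ becomes the identity), and then extend by additivity to finite direct sums of free modules and their direct summands, i.e., to all of $\fr P(\Z G)$. Naturality of $\varsigma_P$ in $P$ is immediate from naturality of post-composition. Extending componentwise from modules to complexes and passing to the derived category yields the desired natural isomorphism $\varsigma$.

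The main obstacle is the final axiom, the commutativity of diagram \eqref{eq:diagram for unitary}, which demands that $\varsigma$ be compatible with the evaluation-style double-duality isomorphisms $\varpi$ on $\fr P(\Z G)$ and $\varrho$ on $\fr P(\Z U)$. This is a diagram chase: for $p\in P$, one traces both paths around the square and verifies they define the same element of $\hom^S_U(\hom^S_U(\res P,\Z U),\Z U)$. Unravelling $\varpi_P(p)(f)=S(f(p))$ and the analogous formula for $\varrho$, the verification reduces to the identity $\pi\circ S=S\circ\pi$, which holds because $U$ is closed under inversion and $\pi$ is both left and right $\Z U$-linear. The genuine difficulty here is bookkeeping: the square contains four applications of $\slot\ox$, and the signs and inversions produced by the antipode at each level have to be tracked carefully. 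Once this is done, the commutativity is manifest and the unitarity of $\res^G_U$ follows.
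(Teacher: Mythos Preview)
Your argument is correct and, at bottom, uses the same key ingredient as the paper: the $\Z U$-bilinear projection $\pi\colon\Z G\to\Z U$ that kills $G\setminus U$. However, the packaging differs. The paper constructs $\varsigma$ by first invoking the Eckmann--Shapiro (Nakayama) adjunction to identify $\res^G_U\hom^S_G(P,\hom^S_U(\Z G,\Z U))$ with $\hom^S_U(\res^G_U P,\Z U)$, and then using the finite-index isomorphism $\Psi\colon\Z G\to\hom_U(\Z G,\Z U)$, $\Psi(g)(h)=hg\,\delta_{\{hg\in U\}}$, to replace the inner $\hom^S_U(\Z G,\Z U)$ by $\Z G$. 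The commutativity of \eqref{eq:diagram for unitary} is then checked by an explicit element chase that ends in expressions like $uS(\alpha(x))\delta_{\{\alpha(x)\in U\}}$ on both sides. Your route bypasses the adjunction entirely: you write down $\varsigma_P(f)=\pi\circ f$ directly, verify bijectivity on $\Z G$ and propagate to summands, and reduce the double-dual compatibility to the single identity $\pi\circ S=S\circ\pi$. This is more elementary and makes the role of the projection transparent from the start; the paper's approach, by contrast, explains \emph{why} such a $\varsigma$ should exist (as a shadow of a general adjunction), at the cost of a longer final computation. One small point: your claim that ``$\varsigma_{\Z G}$ becomes the identity'' is slightly loose---it is an isomorphism, but only the identity under a specific identification of both sides with $\Z G$; this does not affect the argument.
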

\begin{proof}
Let $P$ be an object in $\fr P(\Z G)$. By a particular instance of Eckmann-Shapiro Lemma (sometimes called Nakayama relations, cf \cite[\s~2.8]{benson}), one has the natural isomorphism of abelian groups
\[
\begin{array}{rcl}
\Phi\colon\res^G_U\left(\hom^S_G(P,\hom^S_U(\Z G,\Z U)\right) & \to & \hom^S_U\left(\Z U\otimes_G P,\Z U\right)\\
\Phi(\alpha) & \colon & u\otimes x\mapsto\alpha(x)(u).
\end{array}
\]
But $\Phi$ is also compatible with the $U$-action, hence it is an isomorphism of $U$-modules. Moreover, $\Z U\otimes_GP$ may be identified with $\res^G_U(P)$. Finally, since the index $[G:U]$ is finite, one has the isomorphism
\[
\Psi\colon\Z G\to\hom_U(\Z G,\Z U),\quad \Psi(g)(h)=hg\delta_{\{hg\in U\}}
\]
where
\[
 \delta_{\{x\in U\}}=\begin{cases}
                      1 & x\in U\\
                      0 & x\notin U.
                     \end{cases}
\]
Summing up, there is a natural isomorphism of functors
\[
\begin{array}{rcl}
\varsigma\colon\res^G_U(\slot\ox) & \to & \res^G_U(\slot)\ox\\
\varsigma_P(\chi) & \colon & u\otimes_G x\mapsto\Psi\circ\chi(x)(u).
\end{array}
\]

Then Diagram \eqref{eq:diagram for unitary} becomes
\begin{equation}
 \xymatrix{
\res^G_U(P)\ar[d]_-{\varpi_{\res^G_U(P)}^U(\res^G_U)}\ar[r]^-{\res^G_U(\varpi_P^G)} & \res^G_U(\hom^S_G(\hom^S_G(P,\Z G),\Z G))\ar[d]^-{\varsigma_{\slot\ox}}\\
\hom^S_U(\hom^S_U(\res^G_U(P),\Z U),\Z U)\ar[r]_-{\varsigma_{\slot}\ox} & \hom^S_U(\res^G_U(\hom^S_G(P,\Z G)),\Z U)
}
\end{equation}
and (simplifying the notation a little and using the identification $\res^G_U(\slot)=\Z U\otimes_G\slot$) we need to verify that $\varsigma(1\otimes_G\varpi^G(x))=\varsigma\ox(\varpi^U(x))$ for all $x\in P$. On one side,
\[
\begin{array}{rcl}
\varsigma(\hskip-1pt 1\!\otimes_{_G}\!\varpi^G\!(\hskip-0.9pt x\hskip-0.9pt)\!\hskip0.1pt)\colon\!\res^G_U(P\ox) & \to &\Z U\\
\varsigma(1\!\otimes_{_G}\!\varpi^G(x))(u\otimes\alpha)\!&=&\!\Psi\Big(\varpi^G(x)(\alpha)\Big)(u)=\Big(\Psi(S(\alpha(x)))\Big)(u)=\\
\!&=&\!uS(\alpha(x))\delta_{\{uS(\alpha(x))\in U\}}=uS(\alpha(x))\delta_{\{\alpha(x)\in U\}}.
\end{array}
\]
On the other,
\[
\begin{array}{rcl}
\varsigma\ox(\varpi^U\!(x))\colon\res^G_U(P\ox) & \to &\Z U\\
\varsigma\ox(\varpi^U(x))(u\otimes\alpha)\!&=&\!\varpi^U(x) \Big(\varsigma(u\otimes_G\alpha)\Big)=u\cdot\varpi^U(x)(\varsigma(1\otimes_G\alpha))=\\
\!&=&\!u\cdot S(\varsigma(1\otimes_G\alpha)(x))=u\cdot S\Big((\Psi\circ\alpha)(x)(1)\Big)=\\
\!&=&\!u\cdot S(\alpha(x)\delta_{\{\alpha(x)\in U\}})=uS(\alpha(x))\delta_{\{\alpha(x)\in U\}}.
\end{array}
\]
\end{proof}
\titleformat
{\chapter} 
[display] 
{\mdseries\huge\scshape} 
{} 
{0ex} 
{
    \centering
}
[
\rule{\textwidth}{1pt}
\vspace{1ex}
]
\bibliographystyle{amsplain}
\cleardoublepage
\phantomsection
\addcontentsline{toc}{chapter}{Bibliography} 
\bibliography{thesis}

\providecommand{\bysame}{\leavevmode\hbox to3em{\hrulefill}\thinspace}
\providecommand{\MR}{\relax\ifhmode\unskip\space\fi MR }
\providecommand{\MRhref}[2]{%
  \href{http://www.ams.org/mathscinet-getitem?mr=#1}{#2}
}
\providecommand{\href}[2]{#2}
\begin{thebibliography}{10}

\bibitem{alexander}
James~W. Alexander, \emph{Note on {R}iemann spaces}, Bulletin of the AMS
  \textbf{26} (1920), no.~8, 370--372.

\bibitem{3mfdgp}
Matthias Aschenbrenner, Stefan Friedl, and Henry Wilton, \emph{3-manifold
  groups}, arXiv preprint arXiv:1205.0202 (2012).

\bibitem{balmer}
Paul Balmer, \emph{Triangular {W}itt groups part {I}: The 12-term localization
  exact sequence}, K-theory \textbf{19} (2000), no.~4, 311--363.

\bibitem{benson}
David~J. Benson, \emph{Representations and cohomology: Volume 1, basic
  representation theory of finite groups and associative algebras}, Cambridge
  University Press, 1998.

\bibitem{bieri1978}
Robert Bieri, \emph{On groups of cohomology dimension {$2$}}, Topology and
  algebra ({P}roc. {C}olloq., {E}idgen\"oss. {T}ech. {H}ochsch., {Z}urich,
  1977) (Max-Albert Knus, Guido Mislin, and Urs Stammbach, eds.), Univ.
  Gen\`eve, Geneva, 1978, pp.~55--62.

\bibitem{bieck}
Robert Bieri and Beno Eckmann, \emph{Groups with homological duality
  generalizing {P}oincar{\'e} duality}, Inventiones mathematicae \textbf{20}
  (1973), no.~2, 103--124.

\bibitem{brown}
Kenneth~S. Brown, \emph{Cohomology of groups}, Springer, 2012.

\bibitem{burzie}
Gerhard Burde and Heiner Zieschang, \emph{Knots}, Walter de Gruyter, 2003.

\bibitem{crofox}
Richard~H. Crowell and Ralph~H. Fox, \emph{Introduction to knot theory},
  Springer, 2012.

\bibitem{eckmann76}
Beno Eckmann, \emph{Aspherical manifolds and higher-dimensional knots},
  Commentarii Mathematici Helvetici \textbf{51} (1976), no.~1, 93--98.

\bibitem{euclid}
{Euclid of Alexandria}, \emph{Elements}, Various editions, c. 300 BC.

\bibitem{farjon}
F.~Thomas Farrell and Lowell~E. Jones, \emph{Isomorphism conjectures in
  algebraic k-theory}, Journal of the AMS \textbf{6} (1993), no.~2, 249--297.

\bibitem{topo3mani}
Marion~K. Fort, \emph{Topology of 3-manifolds and related topics: Proceedings
  of the univ. of georgia institute, 1961}, Prentice-hall, 1962.

\bibitem{foxfree}
Ralph~H. Fox, \emph{Free differential calculus. {I}: Derivation in the free
  group ring}, Annals of Mathematics \textbf{57} (1953), no.~3, 547--560.

\bibitem{gauss}
Carl~F. Gauss, \emph{Disquisitiones arithmeticae, 1801. {E}nglish translation
  by {A}rthur {A}. {C}larke}, Springer, 1986.

\bibitem{gelman}
Sergei~I. Gelfand and Yuri~I. Manin, \emph{Methods of homological algebra},
  Springer, 2013.

\bibitem{hatcher}
Allen Hatcher, \emph{Algebraic topology}, Cambridge University Press, 2002.

\bibitem{jacshaperi}
William Jaco and Peter~B. Shalen, \emph{Peripheral structure of 3-manifolds},
  Inventiones mathematicae \textbf{38} (1976), no.~1, 55--87.

\bibitem{lang_ANT}
Serge Lang, \emph{Algebraic number theory}, Springer, 2013.

\bibitem{luecksurvey}
Wolfgang L{\"u}ck, \emph{Survey on classifying spaces for families of
  subgroups}, Infinite groups: geometric, combinatorial and dynamical aspects,
  Springer, 2005, pp.~269--322.

\bibitem{luewei2007}
Wolfgang L{\"u}ck and Michael Weiermann, \emph{On the classifying space of the
  family of virtually cyclic subgroups}, arXiv preprint arXiv:math/0702646
  (2007).

\bibitem{maclane}
Saunders MacLane, \emph{Homology}, Springer, 2012.

\bibitem{milnor1956}
John Milnor, \emph{Construction of universal bundles, {I} and {II}}, Annals of
  Mathematics (1956).

\bibitem{misval}
Guido Mislin and Alain Valette, \emph{Proper group actions and the
  {B}aum-{C}onnes conjecture}, Birkh{\"a}user, 2012.

\bibitem{morishita}
Masanori Morishita, \emph{Knots and primes: an introduction to arithmetic
  topology}, Springer, 2011.

\bibitem{neukirch}
J{\"u}rgen Neukirch, \emph{Algebraic number theory}, Springer, 2013.

\bibitem{NSW}
J{\"u}rgen Neukirch, Alexander Schmidt, and Kay Wingberg, \emph{Cohomology of
  number fields}, Springer, 2013.

\bibitem{neumann}
Hanna Neumann, \emph{Varieties of groups}, Springer, 2012.

\bibitem{noga1967}
Dieter Noga, \emph{{\"U}ber den {A}ussenraum von {P}roduktknoten und die
  {B}edeutung der {F}ixgruppen}, Mathematische Zeitschrift \textbf{101} (1967),
  no.~2, 131--141.

\bibitem{papa}
Christos~D. Papakyriakopoulos, \emph{On {D}ehn's lemma and the asphericity of
  knots}, Annals of Mathematics \textbf{66} (1957), no.~1, 1--26.

\bibitem{quadro}
Claudio Quadrelli, \emph{Cohomology of absolute {G}alois groups}, arXiv
  preprint arXiv:1412.7685 (2014).

\bibitem{ricca}
Renzo~L. Ricca and Bernardo Nipoti, \emph{Gauss'linking number revisited},
  Journal of Knot Theory and Its Ramifications \textbf{20} (2011), no.~10,
  1325--1343.

\bibitem{schubert1949}
Horst Schubert, \emph{Die eindeutige {Z}erlegbarkeit eines {K}notens in
  {P}rimknoten: vorgelegt in der {S}itzung vom 29. mai 1948}, Springer, 1949.

\bibitem{scott}
G.~Peter Scott, \emph{Finitely generated 3-manifold groups are finitely
  presented}, Journal of the London Mathematical Society \textbf{2} (1973),
  no.~3, 437--440.

\bibitem{serre}
Jean-Pierre Serre, \emph{Galois cohomology}, Springer, 2013.

\bibitem{silwil}
Daniel Silver and Susan Williams, \emph{Knot invariants from symbolic dynamical
  systems}, Transactions of the AMS \textbf{351} (1999), no.~8, 3243--3265.

\bibitem{simon1976}
Jonathan Simon, \emph{Roots and centralizers of peripheral elements in knot
  groups}, Mathematische Annalen \textbf{222} (1976), no.~3, 205--209.

\bibitem{vandermonde}
Alexandre-Th{\'e}ophile Vandermonde, \emph{Remarques sur les probl{\`e}mes de
  situation}, M{\'e}moires de l'Acad{\'e}mie Royale des Sciences (Paris)
  (1771), 566--574.

\bibitem{latest}
Thomas~S. Weigel, \emph{Poincar{\'e} duality and derived categories with
  duality}, in preparation.

\end{thebibliography}
\end{document}